\newtheorem{theorem}{Theorem}[section]
\newtheorem{corollary}[theorem]{Corollary}
\newtheorem{lemma}[theorem]{Lemma}
\newtheorem{proposition}[theorem]{Proposition}
\newtheorem{remark}[theorem]{Remark}
\newtheorem{definition}[theorem]{Definition}
\newtheorem{example}{Example}
\newtheorem{assum}{Assumption}
\numberwithin{equation}{section}
\def\a{\alpha}				
\def\b{\beta}				
\def\eps{\varepsilon}				
\def\Sa{S_{\alpha}}			
\def\sa{s_{\alpha}}			
\def\va{v_{\alpha}}			
\def\tv{\tilde{v}}			
\def\bva{\bar{v}_{\alpha}}	
\def\bv{\bar{v}}
\def\Ga{G_{\alpha}}			
\def\bGa{\bar{G}_{\alpha}}			
\def\bG{\bar{G}}			
\def\tva{\tilde{v}_{\alpha}}
\def\tG{\tilde{G}}
\def\bGa{\bar{G}_{\alpha}}	
\def\h{h_{\alpha}}			
\def\ra{x^{\min}_{\alpha}}			
\def\rb{x^{\min}_{\beta}}
\def\rtan{x^{\min}_{\tilde{\alpha}_n}}
\def\rone{x^{\min}_1}
\def\ronea{x^{\min}_{1,*}}
\def\barS{S^*}
\def\ma{m_{\alpha}}			
\def\ua{u_{\a}}				
\def\bma{\bar{m}_\alpha}		
\def\bu{\bar{u}}
\def\bua{\bar{u}_\alpha}
\def\c{\bar{c}}				
\def\tu{\tilde{u}}			
\def\E{\mathbb{E}}			
\def\PR{P}			
\def\indF{{\rm \textbf{1}}}		
\def\R{\mathbb{R}}			
\def\Z{\mathbb{Z}}			
\def\N{\mathbb{N}}			
\def\U{\mathbb{U}}			
\def\Rp{\R_{+}}				
\def\X{\mathbb{X}}			
\def\Y{\mathbb{Y}}			
\def\A{\mathbb{A}}			
\def\PS{\Pi}					
\def\argmin{{\rm arg}\!\min}		
\def\K{\mathbb{K}}
\def\bR{\bar{\R}}		
\def\zone{z^{(1)}}				
\def\bh{\textbf{h}}
\title{\LARGE \bf
Stochastic Setup-Cost Inventory Model with Backorders and Quasiconvex Cost Functions
}
\author{
Eugene~A.~Feinberg, Yan~Liang \\
\small
\textit{Department of Applied Mathematics and Statistics} \\
\small
\textit{Stony Brook University, Stony Brook, NY 11794} \\
\small
\textit{eugene.feinberg@stonybrook.edu, yan.liang@stonybrook.edu}
}
\date{}
\begin{document}
\maketitle
%
\begin{abstract}
\textit{
This paper studies a periodic-review single-commodity setup-cost inventory model with backorders and holding/backlog costs satisfying quasiconvexity assumptions. We show that the Markov decision process for this inventory model satisfies the assumptions that lead to the validity of optimality equations for discounted and average-cost problems and to the existence of optimal $(s,S)$ policies. In particular, we prove the equicontinuity of the family of discounted value functions and the convergence of optimal discounted lower thresholds to the optimal average-cost one for some sequences of discount factors converging to $1.$ If an arbitrary nonnegative amount of inventory can be ordered,
we establish stronger  convergence properties: (i) the optimal discounted lower thresholds $s_\alpha$ converge to optimal average-cost lower threshold $s;$ and (ii) the discounted relative value functions converge to  average-cost relative value function. These convergence results previously were known only for subsequences of discount factors even for problems with convex holding/backlog costs. The results of this paper also hold for problems with fixed lead times.
}
\end{abstract}
%
%
\textit{
\textbf{Keywords:}
Inventory control, $(s,S)$ policies, average-cost optimality equations, relative value functions.
}

\section{Introduction}
\label{sec:introduction}

In this paper we study a periodic-review single-commodity setup-cost inventory  model with backorders and holding/backlog costs
satisfying quasiconvexity assumptions. We show that the Markov decision process for this inventory model satisfies the assumptions that lead to the validity of optimality equations for discounted and average-cost problems and to the existence of optimal $(s,S)$ policies. In particular, we prove the equicontinuity of the family of discounted value functions and the convergence of optimal discounted lower thresholds to the optimal average-cost one for some sequences of discount factors converging to $1.$ If an arbitrary nonnegative amount of inventory can be ordered,
we establish stronger  convergence properties: (i) the optimal discounted lower thresholds $s_\alpha$ converge to optimal average-cost lower threshold $s;$ and (ii) the discounted relative value functions converge to  average-cost relative value function. These convergence
results previously were known only for subsequences of discount factors even for problems with convex holding/backlog costs.  The results of this paper hold for problems with deterministic positive lead times.


For problems with convex holding/backlog cost functions, Scarf~\cite{Sca60}
introduced the concept of $K$-convexity to prove the optimality of
$(s,S)$ policies for finite-horizon problems with continuous demand and convex holding/backlog costs.
Zabel \cite{za62} indicated some gaps in Scarf~\cite{Sca60} and corrected them.
References~\cite{BCST,BS99,CS04a,CS04b,Ftut,FL16,FLi16a,FLi16b,Igl63,Sca60,SCB05,VW65}
deal with convex or linear holding/backlog cost functions.
Iglehart~\cite{Igl63} extended Scarf's~\cite{Sca60} results to infinite-horizon
problems with continuous demand. Veinott and Wagner~\cite{VW65} proved the optimality
of $(s, S)$ policies for both finite-horizon and infinite-horizon problems with discrete
demand.  Beyer and Sethi~\cite{BS99} completed the missing proofs in Iglehart~\cite{Igl63}
and Veinott and Wagner~\cite{VW65}.
Chen and Simchi-Levi~\cite{CS04a, CS04b} studied coordinating inventory
control and pricing problems and proved the optimality of $(s,S)$ policies without assuming that
the demand is discrete or continuous.  Under certain assumptions, their results imply the optimality of $(s,S)$ policies for problems without pricing.
Beyer et al.~\cite{BCST} and Huh et al.~\cite{HJN11} studied problems with parameters depending on exogenous factors modeled by a Markov chain.  Additional references can be found in monographs by Porteus~\cite{Por02} and Zipkin~\cite{Zip00}.

The analysis of periodic-review inventory models is based on the theory of Markov Decision Processes (MDPs).
However, most of inventory control papers use only basic facts from the MDP theory, and the corresponding general results had been unavailable for a long time.
Feinberg et al.~\cite{FKZ12} developed the results on MDPs with Borel state spaces, possibly noncompact action sets, and  possibly unbounded one-step costs. Discrete-time periodic-review inventory control problems are particular examples of such MDPs; see Feinberg~\cite{Ftut} for details.  Feinberg and Lewis~\cite{FL16} obtained additional convergence results for convergence of optimal actions for MDPs and established the optimality of $(s,S)$ policies for inventory control problems as well as other results.  Feinberg and Liang~\cite{FLi16a} provided descriptions of optimal policies for all possible values of discount factors (for some parameters, optimal $(s,S)$ policies may not exist for discounted and finite-horizon problems).  Feinberg and Liang~\cite{FLi16b} proved that discrete-time periodic-review inventory models with backorders and convex holding/backlog costs satisfy the equicontinuity assumption, and this implies several additional properties of optimal average-cost policies including the validity of average-cost optimality equations (ACOEs).

Veinott~\cite{Vei66}
 studied the nonstationary setup-cost inventory model
with a fixed lead time, backorders, and  holding/backlog costs satisfying quasiconvexity assumptions.
Veinott~\cite{Vei66} proved the optimality of $(s,S)$ policies for finite-horizon problems
and also provided bounds on the values of the optimal thresholds $s$ and $S.$
Zheng~\cite{Zheng} proved the optimality of $(s,S)$ policies for
models with quasiconvex cost functions and discrete demand under both discounted and
average cost criteria by constructing a solution to the optimality equations.

In this paper we consider the infinite-horizon stationary inventory model with holding/backlog costs satisfying
quasiconvexity assumptions. These quasiconvexity assumptions are introduced by Veinott~\cite{Vei66} for
finite-horizon nonstationary models. Zheng~\cite{Zheng} and Chen and Simchi-Levi~\cite{CS04b} considered a slightly
stronger quasiconvexity assumption for infinite-horizon stationary models. For inventory  model with holding/backlog
costs satisfying quasiconvexity assumptions, this paper establishes convergence properties of optimal discounted
thresholds for discounted problems to the corresponding thresholds for average-cost problems. Some of the results are
new even for problems with convex holding/backlog costs. While convergence of optimal thresholds and relative
discounted value functions was known only for subsequences of discount factors (see \cite{BCST,FL16, FLi16b,HJN11}),
here we show that convergence of lower thresholds and discounted value functions takes place for all
discount factors tending to 1.

The rest of the paper is organized in the following way.
Section~\ref{sec:inventory control}  describes the setup-cost
inventory  model and introduces the assumptions used in this paper.
Section~\ref{sec:mdptotal} establishes the optimality of $(\sa,\Sa)$ policies for
the infinite-horizon problem with the discount factor $\a.$
Section~\ref{sec:mdpac} verifies average-cost optimality assumptions and
the equicontinuity conditions for discounted relative value functions.
Section~\ref{sec:acoe} establishes the validity of ACOEs for the inventory  model
and the optimality of $(s,S)$ policies under the average cost criterion.
Section~\ref{sec:cvg sa} establishes the convergence of discounted optimal
lower thresholds $\sa,$ when the discount factor $\a$ converges to $1,$ to the average-cost optimal lower threshold $s.$
Section~\ref{sec:cvg ua} establishes the convergence of discounted relative
value functions, when the discount factor converges to $1.$
Section~\ref{sec:veinott} presents a reduction from the inventory  model with constant
lead times to the model without lead times using Veinott's~\cite{Vei66} approach.

\section{Setup-Cost Inventory Model with Backorders: Definitions and Assumptions}
\label{sec:inventory control}

Let $\R$ denote the real line, $\Z$ denote the set of all integers, $\Rp:=[0,+\infty)$ and
$\N_0 := \{0,1,2,\ldots \}.$ Consider the stochastic
periodic-review setup-cost inventory  model with backorders.
At times $t=0,1,\ldots,$ a decision-maker views the current inventory of a single commodity
and makes an ordering decision. Assuming zero lead times, the products are immediately available
to meet demand. The cost of ordering is incurred at the time of delivery of the order.
Demand is then realized, the decision-maker views the remaining
inventory, and the process continues. The unmet demand is backlogged. The
demand and the order quantity are assumed to be nonnegative. The objective is to minimize the infinite-horizon
expected total discounted cost for discount factor $\a\in (0,1)$ and long run average cost for $\a = 1.$
The inventory  model is defined by the following parameters:
\begin{enumerate}
    \item $K > 0$ is a fixed ordering cost;
    \item $\bar{c}>0$ is the per unit ordering cost;
    \item $\{D_t,t=1,2,\dots\}$ is a sequence of i.i.d.\ nonnegative finite
    random variables representing the demand at periods $0,1,\dots\ .$
    We assume that $\E [D] < +\infty$ and $\PR(D>0)>0,$
    where $D$ is a random variable with the same distribution as $D_1;$
    \item $h(x)$ is the holding/backlog cost per period if the inventory level is $x.$
    Assume that: (i) the function $\E[h(x-D)]$
    is finite and continuous for all $x\in\X;$
    and (ii) $\E[h(x-D)]\to +\infty$ as $|x|\to +\infty.$
\end{enumerate}
Without loss of generality,  assume that the function $\E[h(x-D)]$ is nonnegative.
The assumption $\PR(D>0)>0$ avoids the trivial case when there is no demand.

Now we formulate an MDP for this inventory  model.
The state and action spaces can be either (i) $\X=\R$ and $\A=\Rp;$ or (ii)
$\X=\Z$ and $\A = \N_0,$ if the demand $D$ takes only integer values and only integer orders are allowed.

The dynamics of the system are defined by the equation
\begin{align}
	x_{t+1} = x_t + a_{t} - D_{t+1}, \qquad t=0,1,2,\ldots,
	\label{eqn:dynamic2}
\end{align}
where $x_t$ and $a_t$ denote the current inventory level and the
ordered amount at period $t$ respectively.
The transition probability $q(d x_{t+1} | x_t, a_t)$ for the MDP defined by the
stochastic equation \eqref{eqn:dynamic2} is
\begin{align}
	q(B | x_t, a_t) = \PR (x_t + a_t - D_{t+1} \in B)
	\label{eqn:tranp2}
\end{align}
for each measurable subset $B$ of $\R.$ The one-step expected cost is
\begin{align}
	c(x,a) := K \indF_{\{ a > 0 \}} + & {\bar c}a + \E[h(x+a-D)], \qquad (x,a)\in \X\times\A,
	\label{eqn:c}
\end{align}
where $\indF_B $ is an indicator of the event $B.$

Let $H_t = (\X\times\A)^{t}\times\X$
be the set of histories for $t=0,1,\dots\ .$ Let $\PS$ be the set of all policies. A
(randomized) decision rule at period $t=0,1,\dots$ is a regular transition probability
$\pi_t : H_t\to \A,$ 
that is, (i) $\pi_t(\cdot|\bh_t)$ is a probability distribution on $\A,$
where $\bh_t=(x_0,a_0,x_1,\dots,a_{t-1},x_t),$ and (ii) for any measurable subset
$B\subset \A,$ the function $\pi_t(B|\cdot)$ is measurable on $H_t.$
A policy $\pi$ is a sequence $(\pi_0,\pi_1,\dots)$ of decision rules.
Moreover, $\pi$ is called non-randomized if each probability measure $\pi_t(\cdot|\bh_t)$ is
concentrated at one point. A non-randomized policy is called stationary if all decisions depend
only on the current state.
According to the Ionescu Tulcea theorem
(see  Hern\'{a}ndez-Lerma and Lasserre~\cite[p.~178]{HLL96}),
given the initial state $x,$ a policy $\pi$ defines the probability distribution
$\PR_{x}^{\pi}$ on the set of all trajectories $H_{ +\infty} = (\X\times\A)^{ +\infty}$.
We denote by $\E_{x}^{\pi}$ the expectation with respect to $\PR_{x}^{\pi}.$

For a finite-horizon $N=0,1,\dots,$ let us define the expected total discounted costs
\begin{equation}\label{eqn:sec_model def:finite total disc cost}
    v_{N,\a}^{\pi} (x) := \mathbb{E}_{x}^{\pi} \Big[ \sum_{t=0}^{N-1}
    \alpha^{t} c(x_t,a_t)  \Big] , \qquad x\in\X,
\end{equation}
where $\alpha\in [0,1]$ is the discount factor and $v_{0,\a}^{\pi} (x)= 0,$ $x\in\X.$
When $N= +\infty$ and $\alpha\in [0,1),$
\eqref{eqn:sec_model def:finite total disc cost} defines the
infinite-horizon expected total discounted cost denoted by $v_{\a}^{\pi}(x).$ Let $\va (x):=\inf_{\pi\in\PS} \va^\pi(x),$ $x\in\X.$  A policy $\pi$ is called optimal for the respective criterion with discount factor $\a$
if $v_{N,\a}^{\pi}(x)=v_{N,\a}(x)$ or $v^\pi_\a(x)=\va (x)$ for all $x\in\X.$

The \emph{average cost per unit time} is defined as
\begin{equation}\label{eqn:sec_model def:avg cost}
    w^{\pi}(x):=\limsup_{N\to +\infty} \frac{1}{N}v_{N,1}^{\pi} (x), \qquad x\in\X.
\end{equation}

Define the optimal value function $w^{{\rm ac}}(x):=\inf_{\pi\in\PS} w^{\pi} (x),$  $x\in\X.$
 A policy $\pi$ is called
average-cost optimal if $w^{\pi}(x)=w^{{\rm ac}}(x)$ for all $x\in\X.$

Recall the definition of quasiconvex functions.
\begin{definition}\label{def:qc}
	A function $f$ is quasiconvex on a convex set $X\subset \R,$ if for all $x,$ $y \in  X$ and $0 \leq \lambda \leq 1$
	\begin{align*}
	 f(\lambda x + (1-\lambda) y ) \leq \max\{ f(x), f(y) \}.
	\end{align*}
\end{definition}

For $\a\in (0,1],$ let us define
\begin{align}
	\h (x)  := h (x) + (1-\a) \c x + \c \E[D], \qquad x\in\X.
	\label{eqn:def ha}
\end{align}
Note that since $\E[h(x-D)]\to +\infty$ as $x\to  +\infty$ and $(1-\a)\c\geq 0$
for all $\a\in(0,1],$ the function $\E[\h (x-D)] = \E[h(x-D)] + (1-\a)\c x + \a \c \E[D]$ tends to $+\infty$ as
$x\to +\infty$ for all $\a\in[0,1].$ In addition, for $\a\in (0,1]$ the function $\E[\h (x-D)]$ is continuous on $\X$ because the
functions $\E[h (x-D)]$ and $(1-\a)\c x$ are continuous on $\X.$

Consider the following assumptions on the quasiconvexity or convexity of the cost function.
\begin{assum}\label{assum:qcall}
There exists $\a^*\in[0,1)$ such that for all $\a\in (\a^*,1]:$
\begin{enumerate}[(i)]
	\item \label{assum:qcall1} The function $\E[\h (x-D)]$ is quasiconvex; and 
	\item \label{assum:qcall2} $\lim_{x\to -\infty} \E[\h (x-D)] > K + \inf_{x\in\X} \{ \E[\h (x-D)] \}.$ 
\end{enumerate}
\end{assum}

\begin{assum}\label{assum:convex}
	The function $h(\cdot)$ is convex on $\X.$
\end{assum}

For the discounted criterion, consider the following assumption, which is weaker than
Assumption~\ref{assum:qcall}. Assumption~\ref{assum:qcall} is used for the convergence of discounted-cost problems to average-cost problem.

\begin{assum}\label{assum:qcalpha}
For a given $\a\in(0,1]$ assume that:
\begin{enumerate}[(i)]
	\item \label{assum:qcalpha:1} the function $\E[\h (x-D)]$ is quasiconvex; and
	\item \label{assum:qcalpha:2} $\lim_{x\to -\infty} \E[\h (x-D)] > K + \inf_{x\in\X} \{ \E[\h (x-D)] \}.$
\end{enumerate}
\end{assum}

\noindent
We recall that Veinott~\cite{Vei66} considered quasiconvexity assumptions for finite-horizon nonstationary problems.
Being applied to stationary infinite-horizon problems, the corresponding assumption is Assumption~\ref{assum:qcalpha}.
For stationary infinite-horizon models and discrete demands, Zheng~\cite{Zheng} used a slightly stronger assumption,
which is Assumption~\ref{assum:qcalpha} with \eqref{assum:qcalpha:2} replaced with
$\lim_{x\to -\infty} \E[h_{\a} (x-D)] = +\infty.$

For $\a\in[0,1],$ if
\begin{align}
	\lim_{x\to -\infty} \E[h_\a (x-D)] > \inf_{x\in\X} \E[h_\a (x-D)] ,
	\label{eqn:raexist}
\end{align}
then we define
\begin{align}
	\ra := \min \big\{ \underset{x\in\X}\argmin \{ \E[\h (x-D)] \} \big\} .
	\label{eqn:def r alpha r*}
\end{align}
Since the function $\E[\h (x-D)]$ is continuous, $\E[\h (x-D)]\to +\infty$ as $x\to +\infty$ and
\eqref{eqn:raexist} imply that $|\ra| <  +\infty.$

The following assumption is used to establish the convergence of the discounted optimal lower thresholds
and relative value functions in Sections~\ref{sec:cvg sa} and \ref{sec:cvg ua} respectively.

\begin{assum}\label{assum:decrease}
For a given $\a\in (0,1],$ the function $\E[h_{\a} (x-D)]$ is strictly decreasing on $(-\infty,\ra],$ where $\ra$ is defined in \eqref{eqn:def r alpha r*}.
\end{assum}

We state the relationships between these assumptions in the following two lemmas. The proofs of the lemmas presented in
this section are available in Appendix~\ref{sec:inventory control:apx}.

\begin{lemma}\label{lm:cqc}
Assumption~\ref{assum:convex} implies the validity of Assumption~\ref{assum:qcall} with
\begin{align}
	\a^* \in [ \max\{ 1 + \lim_{x\to -\infty} \frac{h(x)}{\c x} ,0\} , 1)
	\label{eqn:convexa*}
\end{align}
and the validity of Assumption~\ref{assum:decrease} for all $\a\in(\a^*,1].$
\end{lemma}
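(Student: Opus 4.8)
The plan is to derive all three conclusions from the convexity of the map $x\mapsto\E[\h(x-D)]$ together with an analysis of its slope as $x\to-\infty.$ Since $h$ is convex, taking expectations preserves convexity, so $x\mapsto\E[h(x-D)]$ is convex; adding the affine term $(1-\a)\c x+\a\c\E[D]$ from \eqref{eqn:def ha} keeps it convex, and therefore $\E[\h(x-D)]$ is convex, hence quasiconvex, for every $\a\in(0,1].$ This already gives part (i) of Assumption~\ref{assum:qcall}, with no constraint on $\a^*.$

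The threshold $\a^*$ enters only through the behavior at $-\infty.$ First I would record that the standing hypothesis $\E[h(x-D)]\to+\infty$ as $x\to-\infty,$ combined with convexity of $h$ and $D\ge0,$ forces $h(x)\to+\infty$ as $x\to-\infty$: otherwise $h$ would be nondecreasing on some left half-line, giving $h(x-D)\le h(x)$ and hence $\E[h(x-D)]\le h(x),$ which cannot diverge. Consequently $h$ is strictly decreasing on a left half-line, so the left asymptotic slope $m:=\lim_{x\to-\infty}h(x)/x=\lim_{x\to-\infty}h'(x)$ (a one-sided difference on $\Z$) exists in $[-\infty,0).$ Its strict negativity is precisely what makes $\a^*=\max\{1+m/\c,0\}<1,$ so the interval in \eqref{eqn:convexa*} is nonempty.

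Next, for each fixed $\a\in(\a^*,1]$ I would show $\E[\h(x-D)]\to+\infty$ as $x\to-\infty$ by a supporting-line bound, which avoids interchanging limit and expectation. Fixing a point $x_1,$ convexity gives $h(y)\ge h(x_1)+h'(x_1)(y-x_1)$ for all $y$ (a discrete supporting line on $\Z$); evaluating at $y=x-D$ and taking expectations yields $\E[h(x-D)]\ge h'(x_1)\,x+b$ for a constant $b$ (here $\E[D]<\infty$ is used), so that $\E[\h(x-D)]\ge\bigl(h'(x_1)+(1-\a)\c\bigr)x+b'.$ Since $h'(x_1)\to m$ as $x_1\to-\infty$ and $\a>1+m/\c,$ choosing $x_1$ sufficiently negative makes the slope $h'(x_1)+(1-\a)\c$ strictly negative (the case $m=-\infty,$ i.e.\ $\a^*=0,$ is automatic), whence $\E[\h(x-D)]\to+\infty.$ Together with $\E[\h(x-D)]\to+\infty$ as $x\to+\infty$ (already noted after \eqref{eqn:def ha}) and continuity, the infimum is finite and attained; thus $\lim_{x\to-\infty}\E[\h(x-D)]=+\infty>K+\inf_{x\in\X}\E[\h(x-D)],$ which is part (ii) of Assumption~\ref{assum:qcall}. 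This argument simultaneously verifies \eqref{eqn:raexist} and makes $\ra$ in \eqref{eqn:def r alpha r*} well defined.

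Finally, Assumption~\ref{assum:decrease} follows from convexity alone, using that $\ra$ is the \emph{smallest} minimizer: a convex function is strictly decreasing on $(-\infty,\ra],$ because any flat stretch in that range would either exhibit a minimizer smaller than $\ra$ or, if it sat above the minimum, force the slope to be zero and then negative, contradicting convexity. The main obstacle is the slope analysis at $-\infty$ in the third step---transferring the asymptotic slope $m$ of $h$ to $\E[h(x-D)]$ and pinning the threshold to $1+m/\c$ uniformly in the auxiliary point $x_1$; the supporting-line estimate is exactly what resolves this and, as a bonus, covers the degenerate superlinear case $m=-\infty.$
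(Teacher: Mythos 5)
Your proof is correct and follows essentially the same route as the paper's: convexity of $\E[\h(x-D)]$ gives quasiconvexity, the divergence $h(x)\to+\infty$ as $x\to-\infty$ pins the asymptotic slope $m=\lim_{x\to-\infty}h(x)/x<0$ and hence the threshold $\a^*=\max\{1+m/\c,0\}<1$, and for $\a>\a^*$ the divergence of $\E[\h(x-D)]$ at $-\infty$ yields Assumption~\ref{assum:qcall}(ii) and, via convexity and the minimality of $\ra$, Assumption~\ref{assum:decrease}. The only difference is cosmetic: where the paper argues that $\h(x)=\c x\bigl(h(x)/(\c x)+1-\a\bigr)\to+\infty$ pointwise and then passes to the expectation, you use a supporting-line lower bound $\E[h(x-D)]\ge h'(x_1)x+b$, which is a slightly more explicit justification of the same slope computation.
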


\begin{lemma}\label{lm:assumption}
Assumption~\ref{assum:qcall} implies Assumption~\ref{assum:qcalpha} for $\a\in (\a^*,1].$
\end{lemma}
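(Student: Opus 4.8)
The plan is to observe that Assumptions~\ref{assum:qcall} and \ref{assum:qcalpha} are assembled from identical clauses, the only difference being the scope of the quantifier over the discount factor $\a$. For a fixed $\a$, clause~(i) of Assumption~\ref{assum:qcalpha} — quasiconvexity of $x\mapsto\E[\h(x-D)]$ — is word-for-word clause~(\ref{assum:qcall1}) of Assumption~\ref{assum:qcall}, and clause~(\ref{assum:qcalpha:2}) — the strict inequality $\lim_{x\to-\infty}\E[\h(x-D)]>K+\inf_{x\in\X}\{\E[\h(x-D)]\}$ — is word-for-word clause~(\ref{assum:qcall2}). Since the symbol $\h$ already carries its dependence on $\a$ through the definition \eqref{eqn:def ha}, no further rewriting of $\E[\h(x-D)]$ is needed before comparing the two statements.

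Given this, the proof is a specialization argument. Assumption~\ref{assum:qcall} furnishes a threshold $\a^*\in[0,1)$ for which clauses~(\ref{assum:qcall1}) and~(\ref{assum:qcall2}) hold simultaneously for \emph{every} $\a\in(\a^*,1]$. Fixing an arbitrary $\a$ in this interval and reading off the two clauses at that single value of $\a$ yields precisely the two clauses of Assumption~\ref{assum:qcalpha} for that $\a$. As $\a$ was arbitrary in $(\a^*,1]$, this establishes Assumption~\ref{assum:qcalpha} throughout $(\a^*,1]$, which is the asserted conclusion.

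There is essentially no analytical obstacle here; the content of the lemma is organizational rather than quantitative. The single point I would check carefully is that the two statements really are clause-identical once $\a$ is held fixed, so that no hidden strengthening sneaks in when passing from the ``for all $\a$'' formulation of Assumption~\ref{assum:qcall} to the ``for a given $\a$'' formulation of Assumption~\ref{assum:qcalpha}. Because both conditions are phrased in terms of the same object $\E[\h(x-D)]$, the same constant $K,$ and the same infimum $\inf_{x\in\X}\{\E[\h(x-D)]\},$ this verification is immediate, and no new estimates on the demand distribution or on $h$ are required.
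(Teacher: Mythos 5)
Your argument for $\a\in(\a^*,1)$ is exactly the paper's: once $\a$ is fixed, the two clauses of Assumption~\ref{assum:qcalpha} are word-for-word the two clauses of Assumption~\ref{assum:qcall}, so the implication on the open interval is pure specialization, and the paper dismisses it in one sentence. Where you and the paper diverge is at the endpoint $\a=1$. You handle it by the same specialization, which is legitimate under the literal wording of Assumption~\ref{assum:qcall}, whose quantifier ranges over the half-open interval $(\a^*,1]$ and therefore already postulates the $\a=1$ clauses. The paper instead \emph{derives} the $\a=1$ case by a limiting argument: since $\E[\h(x-D)]=\E[h(x-D)]+(1-\a)\c x+\a\c\E[D]$ converges pointwise as $\a\uparrow1$, quasiconvexity passes to the limit (a pointwise limit of quasiconvex functions is quasiconvex), and $\E[h(x-D)]=\E[\h(x-D)]-(1-\a)\c x-\a\c\E[D]\to+\infty$ as $x\to-\infty$ because $\E[\h(\cdot-D)]$ is bounded below while $-(1-\a)\c x\to+\infty$ for any fixed $\a\in(\a^*,1)$. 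That extra work buys two things: the lemma remains valid even if the quantifier in Assumption~\ref{assum:qcall} is read as ranging only over the open interval $(\a^*,1)$, and it yields the strictly stronger conclusion $\lim_{x\to-\infty}\E[h_1(x-D)]=+\infty$ rather than merely the inequality in clause~(\ref{assum:qcalpha:2}). Your route buys brevity and is logically sound as the assumptions are actually stated; if you keep it, add one sentence flagging that the endpoint case is trivial precisely because $\a=1$ is included in the quantifier of Assumption~\ref{assum:qcall}.
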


\section{Setup-Cost Inventory  Model with Discounted Costs}
\label{sec:mdptotal}

This section establishes the existence of optimal $(\sa,\Sa)$ polclies for the problems
with discounted costs stated in Theorem~\ref{thm:qcall}. We start this section by
verifying the weak continuity of the transition probability $q$ defined in \eqref{eqn:tranp2}
and the $\K$-inf-compactness of the one-step cost function $c$ defined in \eqref{eqn:c}. These
properties, stated in Assumption~\textbf{W*}, imply the validity of optimality equations and
the convergence of value iterations for problems with discounted costs; see
Feinberg~et~al.~\cite[Theorem~4]{FKZ12}.

Recall that a function $f:U\to \R\cup \{+\infty\},$ where $U$ is a subset of a metric space $\U,$ is
called inf-compact, if for every $\lambda\in\R$ the level set $\{u\in\U :f(u)\leq \lambda \}$
is compact.
\begin{definition}[{Feinberg et al.~\cite[Definition 1.1]{FKZ13}, Feinberg~\cite[Definition 2.1]{Ftut}}]
\label{def:k inf compact}
	A function $f:\X\times\A\to \bR$ is called $\K$-inf-compact, if for
	every nonempty compact subset $K$ of $\X,$ the function $f:K\times\A\to \bR$ is inf-compact.
\end{definition}

It is known for discounted MDPs that if the one-step cost
function $c$ and transition probability $q$ satisfy the Assumption~\textbf{W*} below, then it is possible
to write the optimality equations for the finite-horizon and infinite-horizon problems,
these equations define the sets of stationary and Markov optimal policies for
infinite and finite horizons respectively,
$v_{\a} (x) = \lim_{N\to +\infty} v_{N,\a} (x)$ for all $x\in\X,$ and the functions $v_{N,\a},$
$N = 1,2,\ldots,$ and $v_{\a}$ are lower semicontinuous;
see Feinberg~et~al.~\cite[Theorems~3,~4]{FKZ12}.

\vspace{0.1in}
\noindent
\textbf{Assumption W*} ({\rm Feinberg~et~al.~\cite{FKZ12}, Feinberg and Lewis~\cite{FL16},
or Feinberg~\cite{Ftut}})\textbf{.}

(i) The function $c$ is $\K$-inf-compact and bounded below, and

(ii) the transition probability $q(\cdot|x,a)$ is weakly continuous in
  $(x,a)\in \X\times\A,$  that is, for every bounded continuous function $f:\X\to\R,$ the function
  $\tilde{f}(x,a):=\int_\X f(y)q(dy|x,a)$ is continuous on $\X\times\A.$
\vspace{0.1in}

\begin{theorem}\label{thm:W*}
The inventory  model satisfies Assumption~\textbf{W*},
and the one-step cost function $c$ is inf-compact.
\end{theorem}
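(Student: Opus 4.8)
The plan is to verify the two parts of Assumption~\textbf{W*} separately and then to establish full inf-compactness of $c$, from which $\K$-inf-compactness follows immediately.

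For the weak continuity of $q$, observe that for a bounded continuous $f:\X\to\R$ the associated function is $\tilde f(x,a)=\int_\X f(y)\,q(dy|x,a)=\E[f(x+a-D)]$. First I would fix a sequence $(x_n,a_n)\to(x,a)$ in $\X\times\A$. Since $f$ is continuous, $f(x_n+a_n-D)\to f(x+a-D)$ for each realization of $D,$ and since $|f(x_n+a_n-D)|\le\sup|f|<+\infty,$ the dominated convergence theorem yields $\E[f(x_n+a_n-D)]\to\E[f(x+a-D)].$ Hence $\tilde f$ is continuous, which is exactly weak continuity of $q.$ Boundedness below of $c$ is immediate: since $\E[h(x-D)]$ may be taken nonnegative, $K>0,$ $\c>0,$ and $a\ge 0,$ we have $c(x,a)\ge 0$ on $\X\times\A.$

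It remains to prove that $c$ is inf-compact, i.e. that each level set $\{(x,a)\in\X\times\A : c(x,a)\le\lambda\}$ is compact. I would first record lower semicontinuity: $\c a$ is continuous, $\E[h(x+a-D)]$ is continuous by the continuity assumption on $\E[h(\cdot-D)],$ and $K\indF_{\{a>0\}}$ is lower semicontinuous (its superlevel sets are open in $\A$), so $c$ is lower semicontinuous and its level sets are closed. For boundedness, fix $\lambda$ and suppose $c(x,a)\le\lambda.$ Dropping the nonnegative terms $K\indF_{\{a>0\}}$ and $\E[h(x+a-D)]$ gives $\c a\le\lambda,$ so $0\le a\le\lambda/\c;$ thus the $a$-coordinate is bounded. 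Writing $y:=x+a$ and $G(y):=\E[h(y-D)],$ the remaining terms being nonnegative give $G(x+a)\le\lambda.$ Since $G(y)\to+\infty$ as $|y|\to+\infty,$ the set $\{y : G(y)\le\lambda\}$ is bounded, so $x+a$ is bounded; combined with the bound on $a,$ this bounds $x=(x+a)-a.$ Therefore the level set is closed and bounded, hence compact in $\X\times\A$ (in the discrete case $\X=\Z,$ $\A=\N_0,$ boundedness already gives finiteness).

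Finally, since inf-compactness on $\X\times\A$ implies $\K$-inf-compactness---for any compact $K\subseteq\X$ the set $\{(x,a)\in K\times\A : c(x,a)\le\lambda\}$ is the intersection of the compact level set with the closed set $K\times\A,$ hence compact---both assertions of the theorem follow. The main obstacle is not conceptual but organizational: the essential point is that the linear ordering term $\c a$ controls $a$ while the coercivity of $\E[h(\cdot-D)]$ controls the post-order level $x+a,$ and only together do these bound the inventory level $x;$ some care is also needed with the lower semicontinuity of the setup-cost indicator at $a=0.$
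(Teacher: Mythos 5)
Your proof is correct, and it is essentially the argument behind the paper's one-line proof: the paper simply cites Feinberg and Lewis~\cite[Corollary 5.2]{FL16} and Feinberg~\cite[p.~22]{Ftut}, which establish exactly these conclusions from the same two hypotheses you use, namely the continuity of $\E[h(x-D)]$ and the fact that $\E[h(x-D)]\to+\infty$ as $|x|\to+\infty.$ Your self-contained verification (dominated convergence for weak continuity; lower semicontinuity plus the bounds $\c a\le\lambda$ and $\E[h(x+a-D)]\le\lambda$ for compactness of level sets) is a correct fleshing-out of that cited argument.
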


\begin{proof}
Since the function $\E[h(x-D)]$ is continuous and tends to $ +\infty$ as $|x|\to +\infty,$
Theorem~\ref{thm:W*} is proved in Feinberg and Lewis~\cite[Corollary 5.2]{FL16} and Feinberg~\cite[p.~22]{Ftut}
\end{proof}

According to Feinberg and Lewis~\cite{FL16}, since Assumption~\textbf{W*} holds for
the MDP corresponding to the described inventory  model, the optimality equations for
the total discounted costs can be written as
\begin{align}
	v_{t+1,\a} (x) &= \min \{ \min_{a\geq 0}[ K + G_{t,\alpha}(x+a)], G_{t,\alpha}(x) \} - \bar{c}x, \quad t=0,1,2,\ldots,\  x\in\X, \label{eqn:vna} \\
	\va (x) & = \min \{ \min_{a\geq 0}[ K + G_{\alpha}(x+a)], G_{\alpha}(x) \} - \bar{c}x,\qquad x\in\X, \label{eqn:va}
\end{align}
where
\begin{align}
G_{t, \alpha}(x)  &:= {\bar c} x + \E[h(x-D)] + \alpha \E [v_{t, \alpha}(x-D)], \quad t=0,1,2,\ldots,\  x\in\X, \label{eqn:gna} \\
G_{\alpha}(x)  &:= {\bar c} x + \E[h(x-D)] + \alpha \E [v_{\alpha}(x-D)],\qquad  x\in\X, \label{eqn:ga}
\end{align}
and $v_{0,\a}(x) = 0$ for all $x\in\X.$

Recall the definition of $(s,S)$ policies.
Suppose $f(x)$ is a lower semicontinuous function such that
$\liminf_{|x|\to +\infty}f(x)> K + \inf_{x\in\X} f(x).$ Let
\begin{align}
	S &\in \underset{x\in\X}{\argmin} \{ f (x) \} \label{eqn:def S}, \\
	s &= \inf \{ x\leq S: f (x) \leq K + f(S) \}. \label{eqn:def s}
\end{align}

\begin{definition}\label{def:sS policy}
	Let $s_t$ and $S_t$ be real numbers such that $s_t\leq S_t,$ $t=0,1,\ldots\ .$
	A policy is called
	an $(s_t,S_t)$ policy at step $t$ if it orders up to the level $S_t,$ if $x_t<s_t,$
	and does not order, if $x_t\geq s_t.$ A Markov policy is called an $(s_t,S_t)$ policy
	if it is an $(s_t,S_t)$ policy at all steps $t=0,1,\ldots\ .$ A policy is called an
	$(s,S)$ policy if it is stationary and it is an $(s,S)$ policy at all steps
	$t=0,1,\ldots\ .$
\end{definition}

In this section, we consider Assumption~\ref{assum:qcalpha}, which guarantees the optimality
of $(\sa,\Sa)$ policies for infinite-horizon problems with the discount factor $\a,$
as this is stated in the following theorem.

\begin{theorem}\label{thm:qcall}
Let Assumption~\ref{assum:qcalpha} hold for $\a\in(0,1).$ For the infinite-horizon problem,
there exists an optimal $(s_\alpha, S_\alpha)$ policy, where $S_{\a}$ and $s_{\a}$ are real numbers
such that $S_{\a}$ satisfies  \eqref{eqn:def S}  and 	$s_{\a}$ is defined in \eqref{eqn:def s}
with $f(x):=G_{\a}(x),$ $x\in\X.$
\end{theorem}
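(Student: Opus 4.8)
The strategy is to show that the value function $v_\alpha$ and, more importantly, the function $G_\alpha$ defined in \eqref{eqn:ga} have enough structure for the selector induced by \eqref{eqn:def S}–\eqref{eqn:def s} to be optimal. The standard route for convex holding costs uses $K$-convexity of $G_\alpha$; here, under only the quasiconvexity of $\E[\h(x-D)]$ in Assumption~\ref{assum:qcalpha}, $K$-convexity is unavailable, so I would instead verify directly that $G_\alpha$ is a continuous function satisfying $\liminf_{|x|\to+\infty} G_\alpha(x) > K + \inf_{x\in\X} G_\alpha(x)$, and that it has the particular monotonicity structure of a "quasiconvex-like" function for which the $(s,S)$ selector given by \eqref{eqn:def S}–\eqref{eqn:def s} coincides with an optimal action selector in the optimality equation \eqref{eqn:va}.

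The plan proceeds in the following steps. First I would establish qualitative properties of $v_\alpha$: since Assumption~\textbf{W*} holds by Theorem~\ref{thm:W*}, the optimality equation \eqref{eqn:va} is valid, $v_\alpha$ is lower semicontinuous, and $v_\alpha=\lim_N v_{N,\alpha}$ by Feinberg~et~al.~\cite[Theorems~3,~4]{FKZ12}. Second, I would analyze $G_\alpha$. Rewriting \eqref{eqn:ga} using the identity that ties $h$ to $\h$ via \eqref{eqn:def ha}, the term $\bar{c}x + \E[h(x-D)]$ can be re-expressed in terms of $\E[\h(x-D)]$ plus an affine correction, so that the quasiconvexity of $\E[\h(x-D)]$ in Assumption~\ref{assum:qcalpha}\eqref{assum:qcalpha:1} transfers to the relevant part of $G_\alpha$. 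Third, from \eqref{eqn:va} one gets $\va(x) = \min\{\min_{a\ge 0}[K+G_\alpha(x+a)], G_\alpha(x)\} - \bar{c}x$, which shows $\bar{c}x + \va(x) \le G_\alpha(x)$ pointwise and lets me propagate quasiconvexity/growth from $\E[\h(\cdot-D)]$ up through $\E[v_\alpha(\cdot-D)]$ to $G_\alpha$. Fourth, I would verify that $G_\alpha$ satisfies the hypotheses preceding \eqref{eqn:def S}, namely lower semicontinuity and $\liminf_{|x|\to+\infty} G_\alpha(x) > K + \inf_x G_\alpha(x)$; the growth at $+\infty$ follows from $\E[\h(x-D)]\to+\infty$ as $x\to+\infty$ (noted after \eqref{eqn:def ha}) together with $v_\alpha\ge 0$, while the behavior at $-\infty$ and the threshold gap come from Assumption~\ref{assum:qcalpha}\eqref{assum:qcalpha:2}. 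Finally, with $S_\alpha$ and $s_\alpha$ defined by \eqref{eqn:def S}–\eqref{eqn:def s} for $f=G_\alpha$, I would check that the action "order up to $S_\alpha$ when $x<s_\alpha$, otherwise do nothing" attains the minimum in \eqref{eqn:va} for every $x$, making the corresponding stationary $(s_\alpha,S_\alpha)$ policy optimal by the standard verification theorem for discounted MDPs.

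The main obstacle is the inductive propagation of the required structural property of $G_{t,\alpha}$ (and hence its limit $G_\alpha$) through the value-iteration recursion \eqref{eqn:vna}–\eqref{eqn:gna}. With genuine convexity one uses Scarf's $K$-convexity, which is preserved under the $\min$ and expectation operations in \eqref{eqn:vna}; under mere quasiconvexity that preservation can fail, so the delicate point is to identify the exact invariant — essentially that each $G_{t,\alpha}$ is continuous, is quasiconvex on a suitable region, is nondecreasing to the right of its minimizer, and has the threshold-gap property $\liminf_{|x|\to+\infty}G_{t,\alpha}(x) > K+\inf_x G_{t,\alpha}(x)$ — and to show this invariant is inherited by $G_{t+1,\alpha}$. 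I expect this to rely on the observation that the $(s,S)$ structure of the optimal action at step $t$ makes the post-decision cost $\min\{\min_{a\ge0}[K+G_{t,\alpha}(x+a)],G_{t,\alpha}(x)\}$ itself inherit the needed shape, after which taking the expectation over $D$ and adding $\bar c x + \E[h(x-D)]$ (quasiconvex in the correct sense) closes the induction; passing to the limit $t\to\infty$ via lower semicontinuity then yields the claim for $G_\alpha$.
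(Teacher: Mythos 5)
Your overall architecture --- verify the optimality equation via Assumption~\textbf{W*}, find a quasiconvexity substitute for $K$-convexity, propagate it by induction through value iteration, then check that the $(s_\alpha,S_\alpha)$ selector attains the minimum in \eqref{eqn:va} by a three-case analysis --- matches the paper's, and your final verification step is exactly what the paper does. But two concrete gaps remain. First, the induction cannot be run on the recursion \eqref{eqn:vna}--\eqref{eqn:gna} as you propose, because the invariant fails at the base case: with $v_{0,\alpha}=0$ one gets $G_{0,\alpha}(x)=\bar c x+\E[h(x-D)]=\E[\h(x-D)]+\alpha\bar c x-\alpha\bar c\E[D]$, which is not $\E[\h(x-D)]$ and need not be nonincreasing to the left of $\ra$; indeed Example~\ref{ex:sSnotoptfinite} shows that finite-horizon problems \emph{without} terminal costs need not admit optimal policies of the required form. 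The paper fixes this by attaching the terminal cost $-\bar c x_N$ (equivalently, passing to Veinott's zero-unit-cost model with holding cost $\h$), for which $\bar G_{0,\alpha}(x)=\E[\h(x-D)]$ and the induction closes, and then needs the separate Lemma~\ref{lm:samevalues} showing that the terminal cost washes out in the infinite horizon, so that $\bar v_\alpha(x)-\bar c x=v_\alpha(x)$ and $\bar G_\alpha=G_\alpha$. Your plan contains no counterpart of either step.

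Second, the invariant you propose is not the right one. Quasiconvexity is not preserved by sums or by the Bellman min-plus-expectation operator, and ``nondecreasing to the right of its minimizer'' is neither needed nor true in general for $G_{t,\alpha}$. The invariant the paper actually propagates (Lemmas~\ref{lm:sS1}--\ref{lm:vaorder}) is: (i) $\bar v_{t,\alpha}(x)\le\bar v_{t,\alpha}(y)+K$ for $x\le y$ (from $x$ one can order up to $y$'s position and imitate, which is free in the zero-unit-cost model except for $K$), yielding the increment bound $\bar G_{t,\alpha}(y)-\bar G_{t,\alpha}(x)\ge\E[\h(y-D)]-\E[\h(x-D)]-\alpha K$; and (ii) $\bar G_{t,\alpha}$ and $\bar v_{t,\alpha}$ are nonincreasing on $(-\infty,\ra]$, where $\ra$ is the minimizer of $\E[\h(\cdot-D)]$, not of $G$. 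Property (i), combined with $\E[\h(\cdot-D)]$ being nondecreasing on $[\ra,+\infty)$ and $K-\alpha K>0$, is what rules out ordering for $x\ge\ra$; this inequality is where the fixed cost enters and is the true replacement for $K$-convexity. Your sketch never identifies it, so the ``delicate point'' you flag at the end remains open in your plan.
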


\begin{remark}\label{rm:sSconvex}
{\rm
Under slightly stronger assumption, this theorem is prove by Zheng~\cite{Zheng}
for inventory models with integer demands and integer orders.
Under Assumption~\ref{assum:convex} and some other technical assumptions,
this conclusion also follows from Chen and Simchi-Levi~\cite{CS04b}.
Under Assumption~\ref{assum:convex}, Theorem~\ref{thm:qcall} is proved in
Feinberg and Liang~\cite[Theorem 4.4]{FLi16a} with $\a\in(\a^*,1)$ for $\a^*$ defined in \eqref{eqn:convexa*}.
In addition, a simple structure of optimal polices is described in Feinberg and Liang~\cite[Theorem 4.4]{FLi16a}
for all $\a\in[0,1)$. However, under Assumption~\ref{assum:qcalpha}
in this paper, if $\a\in [0,\a^*),$ then the structure of optimal policies is currently not clear.
}
\end{remark}

To prove the optimality of $(\sa,\Sa)$ policies, we first consider the same inventory model with
a terminal cost $ -\c x,$ that is, each unit of stock left over can be discarded with the return of $\c$ and each
unit of backlogged demand is satisfied at the cost $\c.$ For this model with terminal costs, the one step cost function
are the same as the original problem and let us denote the expected total discounted cost by
\begin{equation}\label{eqn:sec_model def:finite total disc cost:t}
    \tv_{N,\a}^{\pi} (x) := \mathbb{E}_{x}^{\pi} \Big[ \sum_{t=0}^{N-1}
    \alpha^{t} c(x_t,a_t) -\a^N \c x_N  \Big] , \qquad x\in\X .
\end{equation}
Then we transform the problem into the one with $\c = 0$ and follow the induction proofs in Veinott~\cite{Vei66} to establish properties for $\tv_{\a}$. Then, we shall also show that $\tv_{\a} = \va.$

The finite-horizon discounted cost optimality equations for the inventory model with
terminal costs $-\c x$ can be written as
\begin{align}
	\tv_{t+1,\a} (x) &= \min \{ \min_{a\geq 0}[ K + \tG_{t,\alpha}(x+a)], \tG_{t,\alpha}(x) \} - \bar{c}x, \quad t=0,1,2,\ldots,\  x\in\X, \label{eqn:vna:t}
\end{align}
where
\begin{align}
\tG_{t, \alpha}(x)  &:= {\bar c} x+\E[h(x-D)]+\alpha\E [\tv_{t,\alpha}(x-D)],\quad t=0,1,2,\ldots,\  x\in\X \label{eqn:gna:t}
\end{align}
and $\tv_{0,\a}(x) = -\c x$ for all $x\in\X.$
Then, following Veinott~\cite{Vei66}, we transform the model with positive unit ordering cost and terminal costs $-\c x$ into the model with zero unit and terminal costs and holding/backlog costs $\h$ defined in \eqref{eqn:def ha}.
The one-step cost function for the new model is
\begin{align}\label{eqn:c:trans}
	c_\a (x,a) = K\indF_{\{a>0 \}} +\E[\h (x+a-D)].
\end{align}
Since the function $\E[\h (x-D)]$ is quasiconvex, $\lim_{x\to-\infty}\E[\h (x-D)] >  K + \inf_{x\in\X} \E[\h (x-D)],$
and $\lim_{x\to+\infty} \E[\h (x-D)]=+\infty,$ the function $\E[\h (x-D)]$ is bounded below.
Therefore, $c_\a$ is bounded below and the new model satisfies Assumption~\textbf{W*}.
The optimality equations for the new model are
\begin{align}
	\bv_{t+1,\a} (x) & = \min \{ \min_{a\geq 0} [K + \bG_{t,\a} (x+a)], \bG_{t,\a} (x) \}, \quad t = 0,1,2,\ldots, \  x\in\X, \label{eqn:vna:trans} \\
	\bva (x) &= \min \{ \min_{a\geq 0} [K + \bGa (x+a)], \bGa(x) \}, \qquad x\in\X, \label{eqn:va:trans}
\end{align}
where
\begin{align}
	\bG_{t,\a} (x)  & = \E[\h (x-D)] + \a \E[\bv_{t,\a} (x-D)], \quad t = 0,1,2,\ldots,\  x\in\X, \label{eqn:gna:trans} \\
	\bGa (x) & = \E[\h (x-D)] + \a \E[\bva (x-D)] , \qquad x\in\X, \label{eqn:ga:trans}
\end{align}
and $\bv_{0,\a} (x) = \tv_{0,\a} (x)+\c x = 0$ for all $x\in\X.$
It is easy to see by induction that
\begin{align}
	\bv_{t,\a} (x) & = \tv_{t,\a} (x)+\c x, \qquad x\in\X, \label{eqn:trans:f} \\
	\bG_{t,\a} (x) & = \tG_{t,\a} (x), \qquad t = 0,1,2,\ldots,\  x\in\X.
	\label{eqn:bga=tga}
\end{align}
Since the validity of Assumption~\textbf{W*} for the model with zero unit cost implies that
$\bv_{t,\a} \to \bva$ as $t\to +\infty,$ in view of \eqref{eqn:trans:f} and \eqref{eqn:bga=tga}, we can define
\begin{align}
	\tva (x) := \lim_{t\to+\infty} \tv_{t,\a} (x) = \lim_{t\to+\infty} \bv_{t,\a} (x) - \c x = \bva (x) - \c x . \qquad x\in\X . \label{eqn:trans}
\end{align}
In view of \eqref{eqn:bga=tga}, the finite-horizon model with terminal costs $-\c x$ and the finite-horizon model with zero unit and terminal costs have the same sets of optimal actions for the same state-time pairs. In addition, \eqref{eqn:va:trans} implies that
\begin{align}
	\tva (x) & = \min \{ \min_{a\geq 0}[ K + \tG_{\alpha}(x+a)], \tG_{\alpha}(x) \} - \bar{c}x , \label{eqn:va:t}
\end{align}
where, in view of \eqref{eqn:ga:trans},
\begin{align}
\tG_{\alpha}(x)= \bG_{\alpha} (x)  = {\bar c} x + \E[h(x-D)] + \alpha \E [\tv_{\alpha}(x-D)] , \qquad x\in\X. \label{eqn:ga:t}
\end{align}

Now, we extend the properties of finite-horizon value functions $\bv_{t,\a}$ and $\bG_{t,\a},$ $t = 0,1,2,\ldots,$ stated
in Veinott~\cite[Lemmas~1 and 2]{Vei66} to infinite-horizon value functions $\bva$ and $\bGa.$ The proofs of the
lemmas presented in this section are available in Appendix~\ref{sec:mdptotal:apx}.

\begin{lemma}\label{lm:sS1}
For $x\leq y$ and $t=1,2,\ldots$
\begin{align}
	\bv_{t,\a} (x)  & \leq \bv_{t,\a} (y)   + K, \label{eqn:vta1} \\
	\bva (x)  & \leq \bva (y)  + K, \label{eqn:va1} \\
	\bG_{t,\a} (y) - \bG_{t,\a} (x) & \geq \E[\h(y-D)] - \E[\h(x-D)] - \a K, \label{eqn:Gta1} \\
	\bG_{\a} (y) - \bG_{\a} (x) & \geq \E[\h(y-D)] - \E[\h(x-D)] - \a K. \label{eqn:Ga1}
\end{align}
\end{lemma}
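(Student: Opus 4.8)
The plan is to prove the two finite-horizon bounds \eqref{eqn:vta1} and \eqref{eqn:Gta1} and then obtain their infinite-horizon counterparts \eqref{eqn:va1} and \eqref{eqn:Ga1} by letting $t\to+\infty$. Before anything else I would record a reduction that halves the work: the two $\bG$-inequalities follow from the two $\bv$-inequalities by one common device. From \eqref{eqn:gna:trans},
\begin{align*}
\bG_{t,\a}(y)-\bG_{t,\a}(x)=\big(\E[\h(y-D)]-\E[\h(x-D)]\big)+\a\big(\E[\bv_{t,\a}(y-D)]-\E[\bv_{t,\a}(x-D)]\big),
\end{align*}
so once one knows $\bv_{t,\a}(y-D)-\bv_{t,\a}(x-D)\ge -K$ pointwise (which is \eqref{eqn:vta1} applied to the shifted pair $x-D\le y-D$), taking expectations and using $\a\ge0$ gives \eqref{eqn:Gta1}; the identical step applied to \eqref{eqn:ga:trans} turns \eqref{eqn:va1} into \eqref{eqn:Ga1}.

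The core step is thus \eqref{eqn:vta1}, which I would prove directly from the optimality equation \eqref{eqn:vna:trans} by an order-up-to coupling, with no induction. Fix $x\le y$ and write $\bv_{t,\a}(x)=\min\{\,\min_{a\ge0}[K+\bG_{t-1,\a}(x+a)],\,\bG_{t-1,\a}(x)\,\}$. If the minimum defining $\bv_{t,\a}(y)$ is attained on its ordering branch, say $\bv_{t,\a}(y)=K+\bG_{t-1,\a}(y+a^*)$ (attainment being guaranteed by the $\K$-inf-compactness in Assumption~\textbf{W*}), then from $x$ I order up to the same target by taking $a=a^*+(y-x)\ge0$, so that $\bv_{t,\a}(x)\le K+\bG_{t-1,\a}(y+a^*)=\bv_{t,\a}(y)$. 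If instead $\bv_{t,\a}(y)=\bG_{t-1,\a}(y)$, then ordering up to $y$ from $x$ is feasible and gives $\bv_{t,\a}(x)\le K+\bG_{t-1,\a}(y)=K+\bv_{t,\a}(y)$. Either way \eqref{eqn:vta1} holds; this is the stationary analogue of Veinott~\cite[Lemma~1]{Vei66}.

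Finally I would pass to the limit. Since $\bv_{t,\a}(x)\to\bva(x)$ pointwise (see \eqref{eqn:trans}), letting $t\to+\infty$ in \eqref{eqn:vta1} gives \eqref{eqn:va1} immediately, and then the shift-and-expectation step above converts it into \eqref{eqn:Ga1}. The only point needing care is the legitimacy of splitting $\E[\bv_{t,\a}(y-D)-\bv_{t,\a}(x-D)]$ into a difference of expectations, and of passing the limit under the expectation in the $\bG$-terms; this is harmless here because $\E[\bv_{t,\a}(x-D)]$ and $\E[\bva(x-D)]$ are finite (they already appear in \eqref{eqn:gna:trans} and \eqref{eqn:ga:trans}) and because $\bv_{t,\a}$ is nonnegative and nondecreasing in $t$, so monotone convergence applies. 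I expect this limit passage, rather than the coupling argument, to be the only mildly delicate part.
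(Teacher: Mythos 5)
Your proof is correct and follows essentially the same route as the paper: your order-up-to coupling for \eqref{eqn:vta1} is the paper's chain of inequalities on $\min_{a\ge0}[K+\bG_{t-1,\a}(\cdot+a)]$ written as a case analysis (the paper works with infima directly, so it does not even need the minimizer to be attained), and the $\bG$-inequalities are obtained from the $\bv$-inequalities through \eqref{eqn:gna:trans} and \eqref{eqn:ga:trans} exactly as in the paper. The only divergence is that you get \eqref{eqn:va1} by letting $t\to+\infty$ in \eqref{eqn:vta1}, whereas the paper reruns the same chain directly on the infinite-horizon equation \eqref{eqn:va:trans}; both work, and your closing worry about monotone convergence (and the claim that $\bv_{t,\a}$ is nonnegative and nondecreasing in $t$, which is not justified since $c_\a$ is only bounded below) is moot, because no interchange of limit and expectation is needed once \eqref{eqn:Ga1} is read off from \eqref{eqn:ga:trans} and the pointwise bound \eqref{eqn:va1}.
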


\begin{lemma}\label{lm:order}
Let Assumption~\ref{assum:qcalpha} hold for $\a\in(0,1).$ Then for $t =0,1,\ldots$ and $x\leq y \leq \ra,$
where $\ra$ is defined in \eqref{eqn:def r alpha r*},
\begin{align}
	\bG_{t,\a} (y) - \bG_{t,\a} (x) & \leq 0, \label{eqn:Gtade1} \\
	\bv_{t,\a} (y)  - \bv_{t,\a} (x) &  \leq 0. \label{eqn:vtade1}
\end{align}
\end{lemma}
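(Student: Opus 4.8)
The plan is to prove \eqref{eqn:Gtade1} and \eqref{eqn:vtade1} simultaneously by induction on $t$, following the finite-horizon induction of Veinott~\cite{Vei66} but carrying the discount factor $\a$ and the demand expectation through every step. The induction hypothesis at stage $t$ is that $\bv_{t,\a}$ is nonincreasing on $(-\infty,\ra]$; from it I would derive \eqref{eqn:Gtade1} at stage $t$ and then \eqref{eqn:vtade1} at stage $t+1$, which reinstates the hypothesis for the next round.

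First I would record the elementary fact that $\E[\h(\cdot-D)]$ is nonincreasing on $(-\infty,\ra]$. The limit condition \eqref{assum:qcalpha:2} of Assumption~\ref{assum:qcalpha} together with $K>0$ gives \eqref{eqn:raexist}, so $\ra$ is well defined and finite; and for $x\leq y\leq\ra$ the point $y$ is a convex combination of $x$ and $\ra$, so the quasiconvexity \eqref{assum:qcalpha:1} of Assumption~\ref{assum:qcalpha} yields $\E[\h(y-D)]\leq\max\{\E[\h(x-D)],\E[\h(\ra-D)]\}=\E[\h(x-D)]$, the last equality holding because $\ra$ minimizes $\E[\h(\cdot-D)]$. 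The base case $t=0$ is then immediate: $\bv_{0,\a}\equiv0$ is constant, and $\bG_{0,\a}(x)=\E[\h(x-D)]$ by \eqref{eqn:gna:trans}, which is nonincreasing on $(-\infty,\ra]$ by the fact just recorded.

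For the step from $\bv_{t,\a}$ to $\bG_{t,\a}$, I would fix $x\leq y\leq\ra$; since $D\geq0$, both $x-D$ and $y-D$ lie in $(-\infty,\ra]$ with $x-D\leq y-D$, so the induction hypothesis gives $\bv_{t,\a}(y-D)\leq\bv_{t,\a}(x-D)$ pointwise in $D$. Taking expectations in \eqref{eqn:gna:trans} and adding the monotonicity of $\E[\h(\cdot-D)]$ yields \eqref{eqn:Gtade1}. For the step from $\bG_{t,\a}$ to $\bv_{t+1,\a}$, the one point requiring care --- and the main obstacle --- is to observe that $\min_{a\geq0}[K+\bG_{t,\a}(x+a)]=K+\inf_{z\geq x}\bG_{t,\a}(z)$ is constant in $x$ on $(-\infty,\ra]$: because \eqref{eqn:Gtade1} makes $\bG_{t,\a}$ nonincreasing on $[x,\ra]$, the infimum over $[x,\infty)$ equals the infimum over $[\ra,\infty)$, which does not depend on $x$ as long as $x\leq\ra$. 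Denoting this constant value by $m^*$ and writing $\bv_{t+1,\a}(\cdot)=\min\{K+m^*,\bG_{t,\a}(\cdot)\}$ on $(-\infty,\ra]$ via \eqref{eqn:vna:trans}, the fact that $z\mapsto\min\{K+m^*,z\}$ is nondecreasing lets \eqref{eqn:Gtade1} propagate directly to \eqref{eqn:vtade1} for $t+1$. No growth or continuity of $\bG_{t,\a}$ is needed for this argument; isolating the constancy of the order-up-to value on $(-\infty,\ra]$ is what makes both inequalities fall out.
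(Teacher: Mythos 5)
Your proof is correct and follows essentially the same route as the paper: induction on $t$, alternating between the monotonicity of $\bG_{t,\a}$ on $(-\infty,\ra]$ (obtained from the monotonicity of $\E[\h(\cdot-D)]$ there, the induction hypothesis on $\bv_{t,\a},$ and $D\geq 0$) and the monotonicity of $\bv_{t+1,\a}$ (obtained from the optimality equation \eqref{eqn:vna:trans}). The only cosmetic difference is in the step from $\bG_{t,\a}$ to $\bv_{t+1,\a}$: you isolate the constancy of the order-up-to value $\min_{a\geq 0}[K+\bG_{t,\a}(x+a)]$ on $(-\infty,\ra]$ and write $\bv_{t+1,\a}=\min\{K+m^*,\bG_{t,\a}\}$ there, whereas the paper splits the minimization over $0\leq a<y-x$ and $a\geq y-x$ and compares term by term; these are equivalent uses of the same monotonicity.
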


To prove the optimality of $(\sa,\Sa)$ policies for infinite-horizon problems, we establish the same properties of
the infinite-horizon value functions $\bv_{\a}$ and $\bG_{\a}$ in the following lemma.

\begin{lemma}\label{lm:vaorder}
Let Assumption~\ref{assum:qcalpha} hold for $\a\in(0,1).$ Then for $x\leq y \leq \ra$
\begin{align}
	\bv_{\a} (y)  - \bv_{\a} (x) & \leq 0, \label{eqn:vade1} \\
	\bG_{\a} (y) - \bG_{\a} (x) & \leq  0. \label{eqn:Gade1}
\end{align}
\end{lemma}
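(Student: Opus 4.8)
The plan is to obtain both inequalities by combining the finite-horizon statements of Lemma~\ref{lm:order} with the pointwise convergence $\bv_{t,\a}\to\bva$ recorded earlier in this section (a consequence of Assumption~\textbf{W*}). First I would prove \eqref{eqn:vade1}. Fix $x\le y\le\ra$. By \eqref{eqn:vtade1} we have $\bv_{t,\a}(y)\le\bv_{t,\a}(x)$ for every $t=0,1,\ldots$, and letting $t\to+\infty$ the pointwise limits give $\bva(y)\le\bva(x)$, since a nonstrict inequality is preserved under pointwise limits. This is the entire proof of \eqref{eqn:vade1}.

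For \eqref{eqn:Gade1} I would avoid passing to the limit inside the expectation defining $\bG_{t,\a}$, and instead derive the inequality directly from \eqref{eqn:vade1} together with the quasiconvexity of $\E[\h(\cdot-D)]$. Writing, by \eqref{eqn:ga:trans},
\[
\bGa(y)-\bGa(x)=\big(\E[\h(y-D)]-\E[\h(x-D)]\big)+\a\big(\E[\bva(y-D)]-\E[\bva(x-D)]\big),
\]
I would bound the two bracketed terms separately. For the first term, note that $\ra$ defined in \eqref{eqn:def r alpha r*} is the smallest minimizer of the quasiconvex function $\E[\h(\cdot-D)]$; hence for $x\le y\le\ra$ the point $y$ is a convex combination of $x$ and $\ra$, and Definition~\ref{def:qc} yields $\E[\h(y-D)]\le\max\{\E[\h(x-D)],\E[\h(\ra-D)]\}=\E[\h(x-D)]$, so the first bracket is $\le 0$. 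For the second term, I would use that $D\ge 0$ almost surely, so for each realization $d\ge 0$ we have $x-d\le y-d$ and $y-d\le y\le\ra$; applying the already-proved \eqref{eqn:vade1} to the pair $(x-d,\,y-d)$ gives $\bva(y-d)\le\bva(x-d)$, and taking expectations yields $\E[\bva(y-D)]\le\E[\bva(x-D)]$, so the second bracket is $\le 0$ as well. Together these give $\bGa(y)-\bGa(x)\le 0$, which is \eqref{eqn:Gade1}.

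The routine part is the limit passage in the first step; the point requiring care is the bound on the $\bva$-term in \eqref{eqn:Gade1}. The key observations there are that subtracting the nonnegative demand keeps both arguments inside the region $(-\infty,\ra]$ on which \eqref{eqn:vade1} is valid (because $y-D\le y\le\ra$), so the inequality can be applied pathwise before integrating, and that $\E[\bva(x-D)]$ is finite---guaranteed by the discounted-cost theory underlying \eqref{eqn:ga:trans}---so that the difference of expectations is well defined. I would also remark that \eqref{eqn:Gade1} could alternatively be obtained by letting $t\to+\infty$ in \eqref{eqn:Gtade1}, but that route requires justifying the interchange of limit and expectation (e.g.\ via monotone convergence after shifting the bounded-below cost to be nonnegative), which the present argument sidesteps.
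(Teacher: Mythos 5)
Your proposal is correct and follows essentially the same route as the paper: \eqref{eqn:vade1} is obtained by passing to the limit $t\to+\infty$ in \eqref{eqn:vtade1} using $\bv_{t,\a}\to\bva$, and \eqref{eqn:Gade1} by splitting $\bGa(y)-\bGa(x)$ via \eqref{eqn:ga:trans} into the $\h$-difference (nonpositive since $\E[\h(\cdot-D)]$ is nonincreasing on $(-\infty,\ra]$ by quasiconvexity) and the $\bva$-difference (nonpositive by applying \eqref{eqn:vade1} pathwise, since $y-D\le y\le\ra$). The extra care you take in spelling out the quasiconvexity bound and the pathwise application before integrating only makes explicit what the paper leaves implicit.
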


\begin{theorem}\label{thm:qcall:t}
Let Assumption~\ref{assum:qcalpha} hold for $\a\in(0,1).$ For the inventory model with zero unit and terminal costs,
the following statements hold:
\begin{enumerate}[(i)]
	\item \label{thm:qcall:t:1} For $N=1,2,\ldots$ horizon problem, there exists an optimal $(s_{t,\a},S_{t,\a})_{t=0,1,2,\ldots,N-1}$ policy, where $S_{t,\a}$ and $s_{t,\a}$ are real numbers
such that $S_{t,\a}$ satisfies  \eqref{eqn:def S}  and 	$s_{t,\a}$ is defined in \eqref{eqn:def s}
with $f(x):=\bG_{N-t-1,\a}(x),$ $t=0,1,2,\ldots,N-1,$ $x\in\X;$
	\item \label{thm:qcall:t:2} For infinite-horizon problem, there exists an optimal $(\sa, \Sa)$ policy, where $S_{\a}$ and $s_{\a}$ are real numbers
such that $S_{\a}$ satisfies  \eqref{eqn:def S}  and 	$s_{\a}$ is defined in \eqref{eqn:def s}
with $f(x):=\bG_{\a}(x),$ $x\in\X;$
	\item \label{thm:qcall:t:3} For all $s_{t,\a},$ $S_{t,\a},$ $t=0,1,2,\ldots,$ and $\Sa$ defined in (i) and (ii),
	\begin{align}
		s_{t,\a} \leq \ra \leq S_{t,\a} \leq \barS_\a \quad \text{and} \quad \sa \leq \ra \leq \Sa \leq \barS_\a,
		\label{eqn:ubdStaSa}
	\end{align}	
	where
	\begin{align}
		\barS_\a := \inf\{x>\ra :   \E[\h (x-D)]\geq K + \E[\h (\ra -D)] .
		\label{eqn:defbarS}
	\end{align}
\end{enumerate}


\end{theorem}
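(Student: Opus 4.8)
The plan is to derive all three parts from a single structural fact about the functions $\bG_{t,\a}$ ($t=0,1,\ldots$) and $\bGa$, namely that each is lower semicontinuous, bounded below, tends to $+\infty$ as $x\to+\infty$, is nonincreasing on $(-\infty,\ra]$ by \eqref{eqn:Gtade1} and \eqref{eqn:Gade1}, and satisfies the one-sided inequality $\bGa(y)-\bGa(x)\ge\E[\h(y-D)]-\E[\h(x-D)]-\a K$ for $x\le y$ from \eqref{eqn:Gta1} and \eqref{eqn:Ga1}. I would first confirm that these properties make the thresholds in \eqref{eqn:def S}--\eqref{eqn:def s} well defined: lower semicontinuity of $\bGa$ follows from that of $\bva$ together with Fatou's lemma, and the coercivity requirement $\liminf_{|x|\to+\infty}\bGa(x)>K+\inf_x\bGa(x)$ would be checked from the left-tail behavior of $\bva$ and Assumption~\ref{assum:qcalpha}(ii). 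Since the optimality equations \eqref{eqn:vna:trans} and \eqref{eqn:va:trans} hold and their minimizing selectors define optimal Markov and stationary policies by the theory already invoked (Feinberg et al.~\cite{FKZ12}, Feinberg and Lewis~\cite{FL16}), it suffices to show that the $(s,S)$ rule built from $\bGa$ (resp.\ $\bG_{N-t-1,\a}$) attains the outer minimum in the optimality equation at every state.

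I would carry this out for the infinite-horizon function $\bGa$ (part~(ii)); the finite-horizon claim~(i) is identical per step with $\bG_{t,\a}$ in place of $\bGa$, using \eqref{eqn:Gtade1} and \eqref{eqn:Gta1} in place of \eqref{eqn:Gade1} and \eqref{eqn:Ga1}. Choose $S_\a\in\argmin_x\bGa(x)$ with $S_\a\ge\ra$, which is possible because $\bGa$ is nonincreasing on $(-\infty,\ra]$, and let $s_\a$ be given by \eqref{eqn:def s}. For $x<s_\a$ the definition of $s_\a$ as an infimum forces $\bGa(x)>K+\bGa(S_\a)$, while $\min_{y\ge x}\bGa(y)=\bGa(S_\a)$ since $x<S_\a$; hence ordering up to $S_\a$ attains $\bva(x)=K+\bGa(S_\a)$, exactly as the policy prescribes. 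For $x\ge s_\a$ I must show non-ordering is optimal, that is $\bGa(x)\le K+\inf_{y\ge x}\bGa(y)$. On $[s_\a,\ra]$ this follows from monotonicity: $\bGa(x)\le\bGa(s_\a)\le K+\bGa(S_\a)=K+\inf_{y\ge x}\bGa(y)$, using \eqref{eqn:Gade1} and lower semicontinuity at $s_\a$. On $[\ra,+\infty)$ I would instead invoke \eqref{eqn:Ga1}: for $y\ge x\ge\ra$ it gives $\bGa(x)\le\bGa(y)+\E[\h(x-D)]-\E[\h(y-D)]+\a K$, and since $\E[\h(\cdot-D)]$ is quasiconvex with minimizer $\ra$ it is nondecreasing on $[\ra,+\infty)$, so the middle difference is $\le 0$ and $\bGa(x)\le\bGa(y)+\a K\le K+\inf_{y\ge x}\bGa(y)$.

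For part~(iii), taking $x=\ra$, $y=S_\a$ in \eqref{eqn:Ga1} together with $\E[\h(\ra-D)]=\min_x\E[\h(x-D)]\le\E[\h(S_\a-D)]$ gives $\bGa(\ra)-\bGa(S_\a)\le\a K\le K$, so $\ra$ lies in the set defining $s_\a$ and therefore $s_\a\le\ra$; combined with the choice $S_\a\ge\ra$ this yields $s_\a\le\ra\le S_\a$. For the upper bound $S_\a\le\barS_\a$, I note that for $x\ge\barS_\a$ the definition \eqref{eqn:defbarS}, continuity, and the nondecrease of $\E[\h(\cdot-D)]$ on $[\ra,+\infty)$ give $\E[\h(x-D)]\ge K+\E[\h(\ra-D)]$, so \eqref{eqn:Ga1} applied to the pair $\ra\le x$ produces $\bGa(x)-\bGa(\ra)\ge K-\a K=(1-\a)K>0$; thus no $x\ge\barS_\a$ can minimize $\bGa$, whence $S_\a\le\barS_\a$. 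The finite-horizon bounds in \eqref{eqn:ubdStaSa} follow verbatim with \eqref{eqn:Gta1} and \eqref{eqn:Gtade1} in place of \eqref{eqn:Ga1} and \eqref{eqn:Gade1}.

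The main obstacle is the non-ordering step on $[\ra,+\infty)$, where $\bGa$ is \emph{not} known to be quasiconvex. The key point I would emphasize is that the one-sided $K$-inequality of Lemma~\ref{lm:sS1}, rather than any convexity of $\bGa$ itself, is precisely what closes this case, with quasiconvexity required only of the primitive cost $\E[\h(\cdot-D)]$; this is where Veinott's argument buys the $(s,S)$ structure without the full $K$-convexity available in the convex case. A secondary technical point is the well-definedness of $s_\a$ and $S_\a$: besides lower semicontinuity, one must rule out $s_\a=-\infty$, which I would obtain from the strict coercivity $\liminf_{|x|\to+\infty}\bGa(x)>K+\inf_x\bGa(x)$ by observing that on the far-left tail $\bva$ equals the constant ordering value $K+\bGa(S_\a)$ and reducing the condition to Assumption~\ref{assum:qcalpha}(ii).
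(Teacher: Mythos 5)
Your proposal is correct and follows essentially the same route as the paper's proof: the same three-case analysis (order up to $S_\a$ for $x<\sa$; no order on $[\sa,\ra]$ by the monotonicity in Lemmas~\ref{lm:order} and \ref{lm:vaorder}; no order on $[\ra,+\infty)$ by the one-sided $K$-inequality of Lemma~\ref{lm:sS1} combined with the nondecrease of $\E[\h(\cdot-D)]$ there), together with the same derivation of the bounds in part (iii). The only loose spot is your justification of $\liminf_{x\to-\infty}\bGa(x)>K+\inf_{x\in\X}\bGa(x)$ via the claim that $\bva$ equals the constant ordering value on the far-left tail, which presupposes the optimality being proved; the paper instead uses only \eqref{eqn:vtade1} and \eqref{eqn:vade1} to bound $\bGa(x)\geq\E[\h(x-D)]+\a\E[\bva(\ra-D)]$ for $x\leq\ra$ and then applies Assumption~\ref{assum:qcalpha}(ii), an argument you can substitute directly since you already invoke those lemmas.
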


\begin{proof}[Proof of Theorem~\ref{thm:qcall:t}]
\eqref{thm:qcall:t:1} Consider $N = 1,2,\ldots$ and $t=0,1,2,\ldots,N-1.$ Since Assumption~\textbf{W*} holds in view of Theorem~\ref{thm:W*}, Feinberg et al.~\cite[Theorem 2]{FKZ12}
implies that $\bv_{N-t-1,\a} (x)$ and $\bG_{N-t-1,\a} (x)$ are lower semicontinuous functions. In view of Lemma~\ref{lm:order},
the function $\bG_{N-t-1,\a} (x)$ is nonincreasing on $(-\infty,\ra],$ where $\ra$ is defined in \eqref{eqn:def r alpha r*}. In view of \eqref{eqn:gna:trans}, $\bG_{N-t-1,\a} (x) \geq \c x\to  +\infty$ as $x\to +\infty.$
Therefore, $\bG_{N-t-1,\a} (x)$ is inf-compact; see the definition of inf-compact functions in the paragraph before Definition~\ref{def:k inf compact}.
In view of \eqref{eqn:gna:trans} and \eqref{eqn:vtade1}, $\bG_{N-t-1,\a} (x) \geq \E[\h (x-D)] + \a \E[\bv_{N-t-1,\a} (\ra - D)]$ for all $x\leq \ra.$ Therefore,
\begin{align}
\begin{split}
	\liminf_{x\to -\infty} \bG_{N-t-1,\a} (x) & \geq \lim_{x\to -\infty} \E[\h (x-D)] + \a \E[\bv_{N-t-1,\a} (\ra - D)] \\
	&> K + \E[\h (\ra-D)] + \a \E[\bv_{N-t-1,\a} (\ra - D)] \\
	&= K + \bG_{N-t-1,\a} (\ra) \geq K + \inf_{x\in\X} \bG_{N-t-1,\a} (x) ,
\end{split}
	\label{eqn:limGa>KinfGa}
\end{align}
where the first inequality follows from the equation in the previous sentence,
the second inequality follows from Assumption~\ref{assum:qcalpha}, the equality follows from the definition of the function $\bG_{N-t-1,\a}$ in \eqref{eqn:ga:trans}, and the last inequality is straightforward.

Let $S_{t,\a}$ satisfy \eqref{eqn:def S} and $s_{t,\a}$ be defined in \eqref{eqn:def s} with $f:=\bG_{N-t-1,\a}.$
The lower semicontinuity of $\bG_{N-t-1,\a} (x)$ implies that
\begin{align}
	\bG_{N-t-1,\a}(s_{t,\a}) \leq \bG_{N-t-1,\a} (S_{t,\a}) + K.
	\label{eqn:Ga(sa)<GK}
\end{align}
Since $\bG_{N-t-1,\a} (x)$ is nonincreasing on $(-\infty,\ra],$
\begin{align}
	S_{t,\a} \geq \ra.
	\label{eqn:Sara}
\end{align}

To prove the optimality of $(s_{t,\a},S_{t,\a})$ policies, we consider three cases: (1) $x\geq \ra;$ (2) $s_{t,\a}\leq x\leq \ra;$ and (3) $x<s_{t,\a}.$

(1) In view of Lemma~\ref{lm:sS1}, for $\ra\leq x<y$
\begin{align}
	\bG_{N-t-1,\a} (y) + K - \bG_{N-t-1,\a} (x) \geq \E[\h(y-D)] - \E[\h(x-D)] + K - \a K > 0,
	\label{eqn:gantorder1}
\end{align}
where the first inequality follows from \eqref{eqn:Gta1} and the second one holds because the function
$\E[\h(x-D)]$ is nondecreasing on $[\ra, +\infty)$ and $K-\a K >0.$ Therefore, the action $a=0$ is optimal for
$x\geq \ra.$ In addition, \eqref{eqn:gantorder1} implies that $\bG_{N-t-1,\a} (x)<\bG_{N-t-1,\a} (S_{t,\a}) + K$
for all $x\in[\ra,S_{t,\a}],$ which implies that
\begin{align}
s_{t,\a} \leq \ra.
\label{eqn:sara}
\end{align}

(2) For $s_{t,\a} \leq x \leq \ra$
\begin{align*}
 \bG_{N-t-1,\a} (x) \leq \bG_{N-t-1,\a} (s_{t,\a}) \leq K + \bG_{N-t-1,\a} (S_{t,\a}) = K + \min_{y\in\X} \bG_{N-t-1,\a} (y),
\end{align*}
where the first inequality follows from \eqref{eqn:Gtade1} and the second one follows from \eqref{eqn:Ga(sa)<GK}. Therefore, the action $a=0$ is optimal for
$s_{t,\a} \leq x\leq \ra.$

(3) For $x<s_{t,\a}$
\begin{align*}
 \bG_{N-t-1,\a} (x) > K + \bG_{N-t-1,\a} (S_{t,\a}) = K + \min_{y\in\X} \bG_{N-t-1,\a} (y),
\end{align*}
where the inequality follows from the definition of $s_{t,\a}$ in \eqref{eqn:def s} with $f:=\bG_{N-t-1,\a}.$
Therefore, the action $a=S_{t,\a}-x$ is optimal for $x<s_{t,\a}.$
Thus, for $N$-horizon problem the $(s_{t,\a},S_{t,\a})_{t=0,1,2,\ldots,N-1}$ policy is optimal.

\eqref{thm:qcall:t:2} In view of Lemmas~\ref{lm:sS1}, \ref{lm:order}, and \ref{lm:vaorder}, $\bGa$ and $\bva$ satisfy
the same properties as $\bG_{t,\a}$ and $\bv_{t,\a}.$ Therefore, statement \eqref{thm:qcall:t:2} follows from
the same arguments in the proof of \eqref{thm:qcall:t:1} with $\bG_{N-t-1,\a},$ $\bv_{N-t-1,\a},$ $s_{t,\a},$ and
$S_{t,\a}$ replaced with $\bGa,$ $\bva,$ $\sa,$ and $\Sa$ respectively.

\eqref{thm:qcall:t:3} In view of \eqref{eqn:defbarS}, $\lim_{x\to+\infty} \E[\h (x-D)] = +\infty$ and
Assumption~\ref{assum:qcalpha} imply that $|\barS_\a| < +\infty$ and for $x > \barS_\a$
\begin{align}
	\E[\h(x-D)] \geq \E[\h (\barS_\a - D)] \geq K + \E[\h (\ra - D)].
	\label{eqn:x>barS:1}
\end{align}
Therefore, \eqref{eqn:Gta1} and \eqref{eqn:x>barS:1} imply that for $t=0,1,2,\ldots$ and $x>\barS_\a$
\begin{align}
\begin{split}
	\bG_{t,\a} (x) - \bG_{t,\a} (\ra) &\geq \E[\h(x-D)] - \E[\h(\ra-D)] - \a K \\
	&\geq K - \a K > 0,
\end{split}
	\label{eqn:x>barS:2}
\end{align}
which is equivalent to for $t=0,1,2,\ldots$ and $x>\barS_\a$
\begin{align}
	\bG_{t,\a} (x) > \bG_{t,\a} (\ra) \geq \min_{x\in\X} \bG_{t,\a} (x).
	\label{eqn:x>barS:3}
\end{align}
Therefore, if $x\in \argmin\{\bG_{t,\a} (x)\},$ $t=0,1,2,\ldots,$ then $x \leq \barS_\a.$ Thus, $S_{t,\a} \leq \barS_\a,$
$t=0,1,2,\ldots\ .$ In addition, the same arguments with \eqref{eqn:Gta1} and $\bG_{t,\a}$ replaced with
\eqref{eqn:Ga1} and $\bGa$ imply that $\Sa \leq \barS_\a.$

Furthermore, \eqref{eqn:Sara} and \eqref{eqn:sara} imply that
$s_{t,\a} \leq \ra \leq S_{t,\a}$ and the same arguments before \eqref{eqn:Sara} and \eqref{eqn:sara} being applied to
infinite-horizon problem with $\bG_{N-t-1,\a}$ replaced with $\bGa$ imply that
$\sa \leq \ra \leq \Sa.$ Hence, \eqref{eqn:ubdStaSa} holds.
\end{proof}

\begin{lemma}\label{lm:samevalues}
	Let Assumption~\ref{assum:qcalpha} hold for $\a\in(0,1).$ Then
	\begin{align}
		\bva (x) - \c x = \tva (x) =  \va (x) \geq 0, \qquad x\in\X.
		\label{eqn:modeltrans}
	\end{align}
	In addition, \eqref{eqn:modeltrans} implies that $\Ga (x) = \bGa (x),$ $x\in\X.$
\end{lemma}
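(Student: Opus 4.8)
The first equality in \eqref{eqn:modeltrans}, $\bva(x)-\c x=\tva(x)$, is exactly \eqref{eqn:trans}, so nothing new is needed there, and the inequality $\va(x)\ge 0$ is immediate: in the original model the one-step cost \eqref{eqn:c} satisfies $c(x,a)\ge 0$ because $K>0$, $\c>0$, $a\ge 0$, and $\E[h(x+a-D)]\ge 0$ by the normalization adopted after the model definition; hence $v_{N,\a}^{\pi}(x)\ge 0$ for every $\pi$ and $N$, and letting $N\to+\infty$ gives $\va(x)\ge 0$. The substantive claim is the middle equality $\tva(x)=\va(x)$, and the plan is to prove it by sandwiching the two finite-horizon optimal values for each $N$ and then letting $N\to+\infty$.

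The starting point is the elementary identity, valid for every policy $\pi$ and every $N$ and obtained by comparing \eqref{eqn:sec_model def:finite total disc cost} with \eqref{eqn:sec_model def:finite total disc cost:t},
\begin{align}
	\tv_{N,\a}^{\pi}(x)=v_{N,\a}^{\pi}(x)-\a^N\c\,\E_x^{\pi}[x_N].
	\label{eqn:plan:id}
\end{align}
For the bound $\tva(x)\le\va(x)$ I would take a policy $\pi^*$ that is optimal for the $N$-horizon original problem, so that $v_{N,\a}^{\pi^*}(x)=v_{N,\a}(x)$. Since $a_t\ge 0$, the dynamics \eqref{eqn:dynamic2} give the pathwise bound $x_N\ge x-\sum_{t=1}^{N}D_t$, whence $\E_x^{\pi^*}[x_N]\ge x-N\E[D]$; inserting this into \eqref{eqn:plan:id} yields $\tv_{N,\a}(x)\le\tv_{N,\a}^{\pi^*}(x)\le v_{N,\a}(x)+\a^N\c\,(N\E[D]-x)$. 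For the reverse bound $\va(x)\le\tva(x)$ I would take a policy $\tilde\pi$ that is optimal for the $N$-horizon terminal-cost problem (equivalently, the transformed problem); by Theorem~\ref{thm:qcall:t}\eqref{thm:qcall:t:1} it is an $(s_{t,\a},S_{t,\a})$ policy, and by Theorem~\ref{thm:qcall:t}\eqref{thm:qcall:t:3} each $S_{t,\a}\le\barS_\a$. A one-line induction on \eqref{eqn:dynamic2} then gives the pathwise bound $x_N\le\max\{x,\barS_\a\}$, since every post-order level is at most $\barS_\a$ and demand only decreases the inventory; inserting this into \eqref{eqn:plan:id} gives $v_{N,\a}(x)\le v_{N,\a}^{\tilde\pi}(x)\le\tv_{N,\a}(x)+\a^N\c\max\{x,\barS_\a\}$. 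Letting $N\to+\infty$ in these two displays, and using $\a\in(0,1)$, $\E[D]<+\infty$, $|\barS_\a|<+\infty$, together with the convergences $v_{N,\a}(x)\to\va(x)$ (value iteration under Assumption~\textbf{W*}) and $\tv_{N,\a}(x)\to\tva(x)$ (the definition in \eqref{eqn:trans}), the two error terms vanish and I obtain $\tva(x)=\va(x)$.

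Finally, the identity $\Ga(x)=\bGa(x)$ follows by substitution: using $\bva(x-D)=\va(x-D)+\c(x-D)$, which is \eqref{eqn:modeltrans}, in the definition \eqref{eqn:ga:trans} of $\bGa$ and expanding $\E[\h(x-D)]$ via \eqref{eqn:def ha}, the $\c$-terms recombine to reproduce the definition \eqref{eqn:ga} of $\Ga$; this is a routine computation. The step I expect to require the most care is the reverse bound $\va\le\tva$, since it is the only place where a uniform-in-$N$ control of $\E_x^{\tilde\pi}[x_N]$ is needed, and this is exactly where the $(s,S)$ structure and the threshold bound $S_{t,\a}\le\barS_\a$ of Theorem~\ref{thm:qcall:t}\eqref{thm:qcall:t:3} are essential; the forward bound needs only the trivial pathwise estimate $x_N\ge x-\sum_{t=1}^{N}D_t$.
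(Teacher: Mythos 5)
Your proof is correct and follows essentially the same route as the paper's: both directions are obtained by sandwiching the finite-horizon values through the identity $\tv_{N,\a}^{\pi}(x)=v_{N,\a}^{\pi}(x)-\a^N\c\,\E_x^{\pi}[x_N]$, using the pathwise estimate $x_N\ge x-\sum_{t=1}^{N}D_t$ for $\tva\le\va$ and the bound $x_N\le\max\{x,\barS_\a\}$ from Theorem~\ref{thm:qcall:t}\eqref{thm:qcall:t:3} for the optimal $(s_{t,\a},S_{t,\a})$ policy for $\va\le\tva$, before letting $N\to+\infty$. The only cosmetic difference is that for the first bound the paper works with an arbitrary policy $\pi$ and passes to the infimum over $\pi$ at the end, whereas you use an $N$-horizon optimal policy for the original problem.
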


\begin{proof}[Proof of Theorem~\ref{thm:qcall}]
	Theorem~\ref{thm:qcall} follows from Theorem~\ref{thm:qcall:t}\eqref{thm:qcall:t:2} and Lemma~\ref{lm:samevalues} because equations
	\eqref{eqn:va} and \eqref{eqn:va:trans} are equivalent, and they define the same optimal $(\sa,\Sa)$ policies.
\end{proof}

\begin{remark}\label{rm:sSoptfinite}
{\rm
	Note that $(s_{t,\a},S_{t,\a})_{t=0,1,2,\ldots,N-1}$ policies are optimal for $N=1,2,\ldots$ horizon
	inventory models with terminal costs $-\c x$ (see Theorem~\ref{thm:qcall:t}\eqref{thm:qcall:t:1}), they may not be optimal for
	finite-horizon inventory models without terminal costs (see Example~\ref{ex:sSnotoptfinite}).
	However, Theorem~\ref{thm:qcall} states that there exists
	an optimal $(\sa,\Sa)$ policy for infinite-horizon discounted cost inventory models without terminal costs.
}
\end{remark}

\begin{example}\label{ex:sSnotoptfinite}
{\rm
Consider the inventory model without terminal costs defined by the following parameters:
fixed ordering cost $K = 1,$ per unit
ordering cost $\c = 1,$ deterministic demand $D = 1,$ holding/backlog cost function $h(x) = \frac{1}{2} | x |,$
and the discount factor $\a = \frac{3}{4}.$
Since $\E[\h (x-D)] = \frac{1}{2} |x-1| + \frac{1}{4} x + \frac{3}{4},$ the function $\E[h_\a (x-D)]$ is convex
and hence quasiconvex. In addition, $\lim_{x\to -\infty} \E[\h (x-D)] = +\infty > K+\inf_{x\in\X} \E[\h (x-D)].$
Therefore, Assumption~\ref{assum:qcalpha} holds.
For the single-period problem the policy that does not order is optimal, because the cost incurred,
if nothing is ordered, is $\frac{1}{2} |x-1|$ and the cost incurred,
if $a > 0$ units are ordered, is $1+a+\frac{1}{2}|x+a-1| = 1 + \frac{1}{2}(a + |-a| + |x+a-1|) \geq 1 +
\frac{1}{2}(a + |-a + x+a-1|) > \frac{1}{2}|x-1|.$
}
\end{example}

\section{Verification of Average-Cost Optimality Assumptions for the Setup-Cost Inventory  Model}
\label{sec:mdpac}

In this section we show that, in addition to Assumption~\textbf{W*}, under
Assumption~\ref{assum:qcall}, the setup-cost inventory  model satisfies Assumption~\textbf{B} introduced by Sch\"al~\cite{Sch93}. This implies the validity of average-cost optimality inequalities (ACOIs) and the existence of stationary optimal policies;
see Feinberg~et~al.~\cite[Theorem 4]{FKZ12}. In addition, we show that, under Assumption~\ref{assum:qcall} the inventory  model satisfies the equicontinuity condition from
Feinberg and Liang~\cite[Theorem 3.2]{FLi16b}, which implies the validity of the ACOE for
the inventory  model.

As in Sch\"al~\cite{Sch93} and Feinberg~et~al.~\cite{FKZ12}, define
\begin{align}\label{defmauaw}
	\begin{split}
  		m_{\alpha}: = \underset{x\in\X}{\inf} v_{\alpha}(x), & \quad
  		u_{\alpha}(x): = v_{\alpha}(x) - m_{\alpha}, \\
  		\underline{w}: = \underset{\alpha\uparrow1}{\liminf}(1-\alpha)m_{\alpha}, & \quad
  		\bar{w}: = \underset{\alpha\uparrow1}{\limsup}(1-\alpha)m_{\alpha} .
	\end{split}
\end{align}
The function $\ua$ is called the discounted relative value function. Consider the following assumption in addition to Assumption~\textbf{W*}.

\vspace{0.1in}
\noindent
\textbf{Assumption B.}
(i) $w^{*} := \inf_{x\in\X} w^{{\rm ac}}(x)< +\infty,$ and
(ii) $\underset{\alpha\in [0,1)}{\sup} u_{\alpha}(x) <  +\infty,$ $x\in\X.$
\vspace{0.1in}

As follows from Sch\"al~\cite[Lemma~1.2(a)]{Sch93}, Assumption~\textbf{B}(i) implies that $m_\alpha<+\infty$ for all $\alpha\in [0,1).$  Thus, all the quantities in \eqref{defmauaw} are defined.  According to Feinberg et al.~\cite[Theorems~3, 4]{FKZ12}, if Assumptions~\textbf{W*} and \textbf{B} hold, then $\underline{w} = \bar{w}$ and therefore,
\begin{align}
	\lim_{\a\uparrow1} (1-\a) m_{\a} = \underline{w} = \bar{w}.
\label{eqn:bar w = underline w}
\end{align}

Define the following function on $\X$ for the sequence $\{ \a_n\uparrow 1 \}_{n=1,2,\ldots} :$
\begin{eqnarray}\label{EQN1}
  \tu(x) &:=& \liminf_{n\to +\infty,y\rightarrow x} u_{\a_n}(y) .
\end{eqnarray}
In words, ${\tilde u}(x)$ is the largest number such that ${\tilde u}(x)\le \liminf_{n\to +\infty}u_{\alpha_n}(y_n)$ for all sequences $\{y_n\to x\}.$ Since $u_{\a}(x)$ is nonnegative by definition, $\tu(x)$ is also nonnegative. The function $\tilde u,$ defined in \eqref{EQN1} for a sequence $\{ \a_n\uparrow 1 \}_{n=1,2,\ldots}$ of nonnegative discount factors, is called an average-cost relative value function.

If Assumptions~\textbf{W*} and \textbf{B} hold, then Feinberg et al.~\cite[Corollary~2]{FKZ12}
implies the validity of ACOIs and
\begin{align}\label{eqsfkz}
    w^{\phi}(x)= \underline{w} = \lim_{\alpha\uparrow 1}
    (1-\alpha)v_{\alpha} (x) = \bar{w} = w^*, \qquad x\in\X,
\end{align}
where $w^{\phi} (x)$ is defined in \eqref{eqn:sec_model def:avg cost}.
Furthermore, let us define $w:=\underline{w};$ see \eqref{eqn:bar w = underline w} and  \eqref{eqsfkz} for other equalities for $w.$

Consider the renewal counting process
\begin{align}
	\textbf{N}(t) :=  \sup \{ n=0,1,\ldots\  : \textbf{S}_n \leq t \},
	\label{renew}
\end{align}
where $t\in\Rp,$ $\textbf{S}_0 := 0,$ and
\begin{align}
	\textbf{S}_n := \sum_{j=1}^n D_j, \qquad n=1,2,\ldots\ .
	\label{eqn:Sn}
\end{align}

Observe that since $\PR(D > 0) > 0,$ $\E[\textbf{N}(t)] < +\infty,$ $t\in\Rp;$ see Resnick~\cite[Theorem 3.3.1]{Res92}.
For $x\in\X$ and $y\ge 0$  define
\begin{align}
	E_y(x) := \E[h(x - \textbf{S}_{\textbf{N}(y)+1})].
	\label{eqn:Eyx}
\end{align}
Since $x-y\leq x-\textbf{S}_{\textbf{N}(y)} \leq x$ and the function $\E[h(x-D)]$ is quasiconvex,
\begin{align}
	E_y (x) = \E[h( x-\textbf{S}_{\textbf{N}(y)} - D )] \leq \max\{ \E[h(x-y-D)],\E[h(x-D)] \} <  +\infty.
	\label{eqn:Eyxfinite}
\end{align}

\begin{theorem}\label{thm:B}
	Let Assumption~\ref{assum:qcall} hold. The inventory  model satisfies Assumption~\textbf{B}.
\end{theorem}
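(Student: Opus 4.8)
The plan is to verify the two parts of Assumption~\textbf{B} separately. Part (i), that $w^* = \inf_x w^{\rm ac}(x) < +\infty$, should be the easy half: it suffices to exhibit a single policy whose average cost is finite. The natural candidate is a fixed $(s,S)$-type or order-up-to policy (say, always order up to $\ra$ when below it), under which the inventory process is a positive recurrent regenerative process driven by the renewal structure in \eqref{renew}--\eqref{eqn:Sn}. Using $\E[\textbf{N}(t)]<+\infty$ and the finiteness bound \eqref{eqn:Eyxfinite} for $E_y(x)$, I would bound the long-run average of the per-period holding/backlog cost plus the amortized ordering cost by a finite constant, giving $w^*<+\infty$.

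The real work is part (ii): $\sup_{\a\in[0,1)} u_\a(x) = \sup_\a (v_\a(x)-m_\a) < +\infty$ for each fixed $x$. Here I would exploit the structure established in Section~\ref{sec:mdptotal}. Since an optimal $(\sa,\Sa)$ policy exists, $m_\a = v_\a(\Sa) - $ (suitable ordering adjustment), or more usefully $\bva$ attains its infimum and $m_\a$ is controlled near $\ra$. The key idea is to bound $v_\a(x)-m_\a$ by the expected discounted cost of \emph{reaching} a low-cost region from $x$: starting at $x$, follow a policy that drives the state down toward $\ra$ using only the free demand (ordering nothing), and compare its discounted cost to the optimal value at the target. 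Because $v_\a(x) \le$ (cost of any policy from $x$), and by the $K$-Lipschitz-type relation $\bva(x)\le \bva(y)+K$ from \eqref{eqn:va1} in Lemma~\ref{lm:sS1} together with the monotonicity on $(-\infty,\ra]$ from Lemma~\ref{lm:vaorder}, the difference $v_\a(x)-m_\a$ reduces to a bound on the expected discounted holding/backlog cost accrued while the demand process carries the state from $x$ down into the region where $\bva$ is minimized.

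**The hard part will be** obtaining this bound \emph{uniformly in $\a$}. The quantity $\sup_\a(1-\a)m_\a = w$ is finite by \eqref{eqn:bar w = underline w}, but $m_\a$ itself blows up like $w/(1-\a)$, so the uniform control of $u_\a$ must come from a genuine cancellation, not from bounding $v_\a$ and $m_\a$ individually. I expect to use the renewal-theoretic quantity $E_y(x)$ in \eqref{eqn:Eyx}: the expected undiscounted cost to steer from $x$ to near $\ra$ under the no-order rule is $\sum_k E_{(\cdot)}(x)$-type terms that are finite because $\E[\textbf{N}(t)]<+\infty$ and \eqref{eqn:Eyxfinite} holds, and this dominates $v_\a(x)-v_\a(\text{target})$ uniformly in $\a$ since discounting only decreases costs. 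Concretely, I would fix a reference state at or below $\ra$, show $v_\a(x)-v_\a(\text{ref})$ is bounded by a finite, $\a$-independent sum of $E$-terms (for $x>\ra$, driving down by demand; for $x\le\ra$, using the order-up monotonicity plus the single setup cost $K$), and then note $v_\a(\text{ref})-m_\a$ is bounded by $K$ via \eqref{eqn:va1}. Summing these gives the desired uniform bound $\sup_\a u_\a(x)<+\infty$.

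**The main obstacle** I anticipate is handling the region $x>\ra$ cleanly: there the state can only move down through demand, so the discounted cost of waiting for $\textbf{S}_{\textbf{N}(y)+1}$ to overshoot $x-\ra$ is exactly what $E_y(x)$ in \eqref{eqn:Eyx} is designed to capture, and verifying that these expectations are finite and $\a$-uniformly summable via the renewal bound $\E[\textbf{N}(t)]<+\infty$ is where the quasiconvexity assumption (through \eqref{eqn:Eyxfinite}) does its essential work.
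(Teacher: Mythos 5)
Your proposal follows essentially the same route as the paper's proof (which itself defers most details to Feinberg and Lewis, Proposition 6.3): establish \textbf{B}(i) by exhibiting a policy with finite average cost, then bound $u_\alpha(x)$ uniformly in $\alpha$ by the undiscounted cost of coasting down by demand to a fixed reference region under the no-order rule, controlling the intermediate holding costs by quasiconvexity via \eqref{eqn:Eyxfinite} and the number of steps by $\E[\textbf{N}(\cdot)]<+\infty$, and closing the gap to $m_\alpha$ with the $K$-bound of Lemma~\ref{lm:sS1}. The one adjustment needed is that the supremum in \textbf{B}(ii) runs over all $\alpha\in[0,1)$, including $\alpha\le\alpha^*$ where the $(s_\alpha,S_\alpha)$ structure, Lemma~\ref{lm:vaorder}, and even $\ra$ are not available; the paper instead anchors the argument on the uniform containment $X_\alpha\subset[x^*_L,x^*_U]$ of \eqref{eq:boundmaxlul}, which follows from \textbf{W*} and \textbf{B}(i) alone, and with that substitution (a fixed target below $x^*_L$, and the extra bounded term $\bar{c}(x_\alpha-{\rm ref})$ added to your $K$) your argument is exactly the paper's.
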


\begin{proof}
Assumption~\textbf{B}(i) follows from the same arguments in the first paragraph of the
proof in Feinberg and Lewis~\cite[Proposition~6.3]{FL16}.

The inf-compactness of the function $c:\X\times\A\to \R$ and the validity of Assumption~\textbf{W*}
imply that for each $\a\in [0,1)$ the function $\va $ is inf-compact (Feinberg and Lewis~\cite[Proposition 3.1(iv)]{FL07}), and therefore the set
\begin{align}
X_\alpha:=\{x\in\X :  \va (x)=m_\alpha\},
\label{eqn:Xa}
\end{align}
where $m_\a$ is defined in \eqref{defmauaw}, is nonempty and compact.
Furthermore, the validity of Assumption~\textbf{B}(i) implies that there is a compact subset $\cal K$ of $\X$ such that $\X_\alpha\subset {\cal K}$ for all $\a\in [0,1);$ see Feinberg et al.~\cite[Theorem 6]{FKZ12}.  Following  Feinberg and Lewis~\cite{FL16}, consider a bounded interval $[x^*_L,x^*_U]\subset \X$ such that
\begin{equation}\label{eq:boundmaxlul}
X_\a\subset [x^*_L,x^*_U]\qquad {\rm for\ all\ } \alpha\in [0,1).
\end{equation}
Consider an arbitrary $\alpha\in [0,1)$ and a state $x_\alpha$ such that $\va (x_\alpha)=m_\a,$ where $m_\a$ is defined in \eqref{defmauaw}.
In view of \eqref{eq:boundmaxlul}, the inequalities $x^*_L\le x_\a\le x^*_U$ hold.

Let
\begin{align}
	E(x):=\E[h(x-D)]+E_{x-x^*_L}(x)< +\infty,
	\label{eqn:Ex}
\end{align}
where the function $E_y(x)$ is defined in
\eqref{eqn:Eyx} and its finiteness is stated in \eqref{eqn:Eyxfinite}.
For $x_t=x-{\bf S}_t,$ $t=1,\ldots, {\bf N}(x-x^*_L)+1,$
\begin{align}
\E [h(x_t)]\le \E[h(x-D)]+\E [h(x-{\bf S}_{{\bf N}(x-x^*_L)+1})]= E(x), \label{est0sum}
\end{align}
where the inequality holds because the function $\E[h(x-D)]$ is quasiconvex and
$x-{\bf S}_{{\bf N}(x-x^*_L)+1} = x-{\bf S}_{{\bf N}(x-x^*_L)} - D\leq x_t = x_{t-1}-D \leq x-D$ for
$t=1,\ldots, {\bf N}(x-x^*_L)+1 .$

By considering the same policy $\sigma$ and following the arguments thereafter
as in the proof in Feinberg and Lewis~\cite[Proposition 6.3]{FL16} with the equation (6.14) there
replaced with \eqref{est0sum}, we obtain the validity of Assumption~{\bf B}.
\end{proof}

Now we establish the boundedness and the equicontinuity of the discounted relative value functions
$\ua$ defined in \eqref{defmauaw}. Consider
\begin{align}
	U(x) :=
	\begin{cases}	
K+\bar{c}(x^*_U - x), & \text{if } x < x^*_L, \\
K+\bar{c}(x^*_U - x^*_L) + (E(x) + \bar{c}\E[D])(1+\E[\textbf{N}(x-x^*_L)]), & \text{if } x\ge x^*_L ,
	\end{cases}
	\label{eqn:def U(x)}
\end{align}
where the real numbers $x^*_L$ and $x^*_U$ are defined in \eqref{eq:boundmaxlul} and the function $E(x)$ is
defined in \eqref{eqn:Ex}.

\begin{lemma}\label{lm:U(x) bounded}
Let Assumption~\ref{assum:qcall} hold.
The following inequalities hold for $\alpha\in [0,1):$
\begin{enumerate}[(i)]
	\item $\ua (x)\le U(x) < +\infty$ for all $x\in\X;$
\item If $x_*,x\in\X$ and $x_*\le x,$ then $C(x_*,x):=\sup_{y\in [x_*,x]}  U(y)< +\infty;$
\item
 $\E[U(x-D)] < +\infty$ for all $x\in\X.$
\end{enumerate}
\end{lemma}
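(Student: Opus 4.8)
The plan is to treat part~(i) as the crux and to obtain parts~(ii) and~(iii) from it together with elementary properties of $E$ and of the renewal function $\E[\textbf{N}(\cdot)].$

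For part~(i) I would read the bound $U(x)$ off the cost of the comparison policy $\sigma$ already used in the proof of Theorem~\ref{thm:B} (following Feinberg and Lewis~\cite[Proposition~6.3]{FL16}). Recall $\ua(x)=\va(x)-\ma$ and that every minimizer of $\va$ lies in $[x^*_L,x^*_U]$ by \eqref{eq:boundmaxlul}. From a state $x<x^*_L$ the policy orders up to $x^*_U,$ paying $K+\c(x^*_U-x)$ and entering the coupling region, which accounts for the first line of \eqref{eqn:def U(x)}. From a state $x\ge x^*_L$ the policy refrains from ordering until the inventory first drops below $x^*_L;$ by the definition of $\textbf{N}$ this occurs at the random period $\tau=\textbf{N}(x-x^*_L)+1,$ whose mean is $1+\E[\textbf{N}(x-x^*_L)].$ During each such period the inventory equals $x-\textbf{S}_t,$ and \eqref{est0sum} bounds the period holding cost by $E(x);$ accounting in addition for the per-unit ordering terms, which contribute at most $\c\E[D]$ per period relative to an optimal trajectory, the one-step relative cost is at most $E(x)+\c\E[D].$ Summing over the $\E[\tau]$ periods (the discount factors $\a^t\le1$ only help) and adding the coupling order $K+\c(x^*_U-x^*_L)$ gives the second line of \eqref{eqn:def U(x)}. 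Finiteness $U(x)<+\infty$ is then immediate: $E(x)<+\infty$ by \eqref{eqn:Eyxfinite}--\eqref{eqn:Ex}, and $\E[\textbf{N}(x-x^*_L)]<+\infty$ since $\PR(D>0)>0$ (see the remark following \eqref{renew}).

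For part~(ii) I would bound $U$ on the two pieces of \eqref{eqn:def U(x)} separately. On $(-\infty,x^*_L)$ the function $U(y)=K+\c(x^*_U-y)$ is affine and hence bounded on any bounded interval. On $[x^*_L,x]$ the factor $1+\E[\textbf{N}(y-x^*_L)]$ is nondecreasing in $y,$ so it is at most $1+\E[\textbf{N}(x-x^*_L)]<+\infty;$ it remains to bound $\sup_{y\in[x^*_L,x]}E(y).$ Writing $E(y)=\E[h(y-D)]+E_{y-x^*_L}(y)$ and applying \eqref{eqn:Eyxfinite} with $y$ in place of the argument and $y-x^*_L$ in place of the subscript yields $E_{y-x^*_L}(y)\le\max\{\E[h(x^*_L-D)],\E[h(y-D)]\},$ whence $E(y)\le 2\max\{\E[h(x^*_L-D)],\sup_{z\in[x^*_L,x]}\E[h(z-D)]\},$ which is finite because $\E[h(\cdot-D)]$ is continuous and therefore bounded on the compact interval $[x^*_L,x].$ Combining the two pieces gives $C(x_*,x)<+\infty.$ For part~(iii) I would split the demand according to whether $x-D$ lands in the coupling region: if $D\le x-x^*_L,$ then $x-D\in[x^*_L,x]$ and $U(x-D)\le C(x^*_L,x)<+\infty$ by part~(ii), so this contribution to $\E[U(x-D)]$ is finite; if $D>x-x^*_L,$ then $x-D<x^*_L$ and $U(x-D)=K+\c(x^*_U-x+D)$ is affine in $D,$ hence integrable since $\E[D]<+\infty.$ Adding the two contributions gives $\E[U(x-D)]<+\infty.$

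I expect the main obstacle to be part~(i): making the coupling precise so that the tail costs of $\sigma$ and of an optimal policy started at a minimizer cancel exactly — in particular, justifying that for $x<x^*_L$ the single order up to $x^*_U$ leaves only the relative cost $K+\c(x^*_U-x)$ with no residual continuation term — and verifying that the accumulated pre-coupling ordering and holding costs are dominated termwise, rather than only in a single-period expectation, by $E(x)+\c\E[D]$ as in \eqref{est0sum}. The secondary difficulty, the uniform bound on $\sup_{y\in[x^*_L,x]}E(y)$ in part~(ii), is comparatively routine and is handled by the quasiconvexity estimate \eqref{eqn:Eyxfinite} together with continuity of $\E[h(\cdot-D)].$
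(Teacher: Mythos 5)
Your proposal is correct and follows essentially the same route as the paper, which gives no in-text argument but defers verbatim to Feinberg and Liang~\cite[Lemma 4.6]{FLi16b}: the bound $U(x)$ in \eqref{eqn:def U(x)} is exactly the expected relative cost of the comparison policy $\sigma$ from the proof of Theorem~\ref{thm:B} (order up to the minimizer region if $x<x^*_L$; otherwise wait the $1+\E[\textbf{N}(x-x^*_L)]$ expected periods until the level drops below $x^*_L$, using \eqref{est0sum} for the holding costs and Wald's identity for the $\bar c\,\E[D]$ ordering term), and your treatment of (ii) via monotonicity of $\E[\textbf{N}(\cdot)]$ plus the quasiconvexity estimate \eqref{eqn:Eyxfinite}, and of (iii) via the split on $\{D\le x-x^*_L\}$, matches the cited argument. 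No gaps worth flagging.
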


\begin{proof}
The proof of this lemma is identical to the proof in Feinberg and Liang~\cite[Lemma 4.6]{FLi16b}.
\end{proof}

The following theorem is proved in Feinberg and Lewis~\cite[Theorem 6.10(iii)]{FL16} under
Assumption~\ref{assum:convex}. Under Assumption~\ref{assum:qcall},
since Assumptions~\textbf{W*} and \textbf{B} hold and there exist optimal $(\sa,\Sa)$ policies
for all $\a\in (\a^*,1),$ the proof of the following lemma coincides with
the proof in Feinberg and Lewis~\cite[Theorem 6.10(iii)]{FL16}.
\begin{theorem}
\label{thm:fle}
Let Assumption~\ref{assum:qcall} hold.
For each nonnegative discount factor $\alpha\in
          (\alpha^*,1)$, consider an optimal $(s^\prime_\alpha, S^\prime_\alpha)$
          policy for the discounted criterion with the discount factor $\alpha.$
          Let $\{\alpha_n\uparrow 1\}_{n=1,2,\ldots}$ be a sequence of negative numbers with $\alpha_1> \alpha^*.$  Every sequence
          $\{(s^\prime_{\alpha_n}, S^\prime_{\alpha_n})\}_{n=1,2,\ldots}$ is
          bounded, and  each its limit point $(s^*,S^*)$ defines an
          average-cost optimal $(s^*,S^*)$ policy. Furthermore, this policy satisfies
          the optimality inequality
\begin{align}
	w + \tu (x) \geq \min \{ \min_{a\geq 0}[ K + H(x+a)], H(x) \} - \bar{c}x,
	\label{eqn:IC ACOE-1}
\end{align}
where
\begin{align}
	H(x) := \bar{c}x +\E[h(x-D)] + \E[\tu (x-D)],
	\label{eqn:IC ACOE-2}
\end{align}
where the function $\tilde u$ is defined in \eqref{EQN1} for an arbitrary subsequence
         $\{\alpha_{n_k}\}_{k=1,2,\ldots}$ of $\{\alpha_{n}\}_{n=1,2,\ldots}$ satisfying $(s^*,S^*)=\lim_{k\to +\infty} (s^\prime_{\alpha_{n_k}}, S^\prime_{\alpha_{n_k}}).$
\end{theorem}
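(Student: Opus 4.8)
The plan is to verify that every hypothesis required by Feinberg and Lewis~\cite[Theorem 6.10(iii)]{FL16} is in force under Assumption~\ref{assum:qcall}, and then to run that argument without change. Three ingredients are needed: (a)~Assumption~\textbf{W*}, which holds by Theorem~\ref{thm:W*}; (b)~Assumption~\textbf{B}, which holds by Theorem~\ref{thm:B}; and (c)~the existence of optimal $(\sa,\Sa)$ policies for every $\a\in(\a^*,1)$. For (c) I would invoke Lemma~\ref{lm:assumption}, which supplies Assumption~\ref{assum:qcalpha} for all such $\a$, and then Theorem~\ref{thm:qcall}. With (a)--(c) recorded, the substance of the proof is the limiting argument over $\a_n\uparrow1$, which I outline next.

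First I would show the sequence $\{(s'_{\alpha_n},S'_{\alpha_n})\}_{n=1,2,\ldots}$ is bounded. By Theorem~\ref{thm:qcall:t}\eqref{thm:qcall:t:3} any optimal thresholds satisfy $\sa\le\ra\le\Sa\le\barS_\a$, so it suffices to bound $\ra$ from above and $\sa$ from below, uniformly in $\a$ near $1$. Since $(1-\a)\c x\to0$ uniformly on compact sets as $\a\uparrow1$, the functions $\E[\h(x-D)]$ converge to the average-cost integrand $\E[h(x-D)]+\c\E[D]$; combined with the coercivity in Assumption~\ref{assum:qcall}, this confines the minimizers $\ra$ of \eqref{eqn:def r alpha r*} and the levels $\barS_\a$ of \eqref{eqn:defbarS} to a fixed bounded interval. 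For the lower bound on $\sa$ I would use the estimate $\bGa(y)-\bGa(x)\ge\E[\h(y-D)]-\E[\h(x-D)]-\a K$ from Lemma~\ref{lm:sS1}: Assumption~\ref{assum:qcall}\eqref{assum:qcall2} makes $\E[\h(x-D)]$ exceed $K+\E[\h(\ra-D)]$ for all sufficiently negative $x$, uniformly in $\a$, so $\bGa(x)>K+\bGa(\Sa)$ there, whence $\sa$ is bounded below. Boundedness then yields a convergent subsequence $\{(s'_{\alpha_{n_k}},S'_{\alpha_{n_k}})\}$ with limit $(s^*,S^*)$.

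The core step is to pass to the limit along this subsequence in the discounted optimality equation \eqref{eqn:va}. Subtracting $m_\a$, writing $\va-m_\a=\ua$, and using $(1-\a)m_\a\to w$, one takes the lower limit as $y\to x$ and $\a_{n_k}\uparrow1$ in the definition \eqref{EQN1} of $\tu$. Lemma~\ref{lm:U(x) bounded} furnishes a locally bounded, $D$-integrable majorant $U$ for the family $\{\ua\}$, which together with the weak continuity of $q$ from Assumption~\textbf{W*} justifies applying Fatou's lemma to the terms $\a_{n_k}\E[\ua(x-D)]$ and interchanging the $\liminf$ with the outer minimization; the resulting inequality is exactly the ACOI \eqref{eqn:IC ACOE-1} with $H$ as in \eqref{eqn:IC ACOE-2}. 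The convergences $s'_{\alpha_{n_k}}\to s^*$ and $S'_{\alpha_{n_k}}\to S^*$, together with lower semicontinuity of the limiting functions, then identify the limiting average-cost optimal policy as an $(s^*,S^*)$ policy. I expect this limit interchange to be the main obstacle: controlling the $\liminf$ simultaneously inside the expectation and inside $\min_{a\ge0}$ requires both the uniform majorant of Lemma~\ref{lm:U(x) bounded} and the weak continuity of the transition law, precisely as in Feinberg and Lewis~\cite[Theorem 6.10(iii)]{FL16}, whose argument then transfers verbatim.
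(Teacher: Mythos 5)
Your proposal matches the paper's proof: the paper likewise establishes the result by verifying the hypotheses of Feinberg and Lewis~\cite[Theorem 6.10(iii)]{FL16} --- namely Assumption~\textbf{W*} (Theorem~\ref{thm:W*}), Assumption~\textbf{B} (Theorem~\ref{thm:B}), and the existence of optimal $(\sa,\Sa)$ policies for all $\a\in(\a^*,1)$ via Lemma~\ref{lm:assumption} and Theorem~\ref{thm:qcall} --- and then transfers that argument verbatim. Your additional sketch of the internal limiting argument (boundedness of the thresholds, subsequence extraction, and the Fatou-type passage to the limit with the majorant from Lemma~\ref{lm:U(x) bounded}) is consistent with what the cited proof does, so the two routes coincide.
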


Recall the following definition of equicontinuity.
\begin{definition}
	A family $\mathcal{H}$ of real-valued functions on a metric space $X$
	is called equicontinuous at the point $x\in X$ if for each $\eps > 0$
	there exists an open set $G$ containing $x$ such that
	\begin{equation*}
		| h(y) - h(x) | < \eps  \;\;\; \text {for all}\  y\in G \text{ and for all } h\in\mathcal{H}.
	\end{equation*}
	The family of functions  $\mathcal{H}$ is called equicontinuous (on $X$) if it is equicontinuous at
	all $x \in X.$
	\label{def:equicontinuous}
\end{definition}

Consider the following assumption on the discounted relative value functions.

\vspace{0.1in}
\noindent
\textbf{Assumption EC} (Feinberg and Liang~\cite{FLi16b})\textbf{.} There exists a sequence $\{ \a_n\uparrow1 \}_{n=1,2,\ldots}$ of nonnegative discount factors such that

(i) the family of functions $\{u_{\a_n}\}_{n=1,2,\ldots}$ is equicontinuous, and

(ii) there exists a nonnegative measurable function $U(x),$ $x\in\X,$ such that
$U(x)\geq u_{\a_n}(x),$ $n=1,2,\ldots,$ and $\int_{\X} U(y)q(dy|x,a) <  +\infty$ for all $x\in\X$ and $a\in \A.$
\vspace{0.1in}

The following theorem provides sufficient conditions under which there exist a stationary policy $\phi$ and a function $\tu(\cdot)$ such that the ACOEs hold for MDPs satisfying
Assumptions~\textbf{W*} and \textbf{B}.

\begin{theorem}[{Feinberg and Liang~\cite[Theorem 3.2]{FLi16b}}]
Let Assumptions~\textbf{W*} and \textbf{B} hold. Consider a sequence $\{ \a_n\uparrow1 \}_{n=1,2,\ldots}$ of nonnegative discount factors.
If Assumption \textbf{EC} is satisfied for the sequence $\{ \a_n\}_{n=1,2,\ldots},$
then the following statements hold.
\begin{enumerate}[(i)]
	\item There exists a subsequence $\{ \a_{n_k} \}_{k=1,2,\ldots}$ of $\{ \a_n \}_{n=1,2,\ldots}$ such that $\{u_{\a_{n_k}}(x)\}$ converges pointwise to $\tu(x),$ $x\in\X,$ where $\tu (x)$ is defined in \eqref{EQN1} for the subsequence $\{ \a_{n_k}\}_{k=1,2,\ldots},$ and the convergence is uniform on each compact subset of $\X.$ In addition, the function $\tu(x)$ is continuous. \label{stat:1}
	\item There exists a stationary  policy $\phi$ satisfying the ACOE with the nonnegative function $\tilde u$ defined for the sequence  $\{ \a_{n_k} \}_{k=1,2,\ldots}$ mentioned in statement \eqref{stat:1}, that is, for all $x\in\X,$
     \begin{align}
  		w + \tu(x) = c(x,\phi(x)) + \int_{\X} \tu(y)q(dy|x,\phi(x)) =
  		\min_{a\in \A} [ c(x,a) + \int_{\X} \tu(y)q(dy|x,a) ],
  	\label{EQN11}
	\end{align}
and every stationary policy satisfying \eqref{EQN11} is average-cost optimal.
\end{enumerate}
\label{thm:acoe}
\end{theorem}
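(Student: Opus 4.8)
The plan is to run the vanishing-discount argument: Assumption~\textbf{EC} upgrades the one-sided $\liminf$ built into \eqref{EQN1} to a genuine limit, after which I pass to the limit in the normalized discounted optimality equation
\[
  u_\a(x) + (1-\a)m_\a = \min_{a\in\A}\Big[\,c(x,a) + \a\int_\X u_\a(y)\,q(dy|x,a)\,\Big], \qquad x\in\X,
\]
which follows from the discounted optimality equation (valid under Assumption~\textbf{W*}; see Feinberg et al.~\cite[Theorems~3,~4]{FKZ12}) by subtracting $\a m_\a$ from both sides and using $u_\a=v_\a-m_\a.$

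First I would prove \eqref{stat:1}. The family $\{u_{\a_n}\}_{n=1,2,\ldots}$ is equicontinuous by Assumption~\textbf{EC}(i) and pointwise bounded, since $0\le u_{\a_n}(x)\le U(x)<+\infty$ by Assumption~\textbf{EC}(ii). A diagonal extraction over a countable dense subset of $\X,$ combined with equicontinuity, yields a subsequence $\{\a_{n_k}\}_{k=1,2,\ldots}$ along which $u_{\a_{n_k}}$ converges to a continuous limit, uniformly on each compact subset of $\X.$ Defining $\tu$ by \eqref{EQN1} for this subsequence, equicontinuity forces the double $\liminf$ to collapse to the pointwise limit: given $\eps>0$ and the neighborhood $G$ of $x$ from Definition~\ref{def:equicontinuous}, the uniform bound $|u_{\a_{n_k}}(y)-u_{\a_{n_k}}(x)|<\eps$ for $y\in G$ gives $\tu(x)=\liminf_{k,\,y\to x}u_{\a_{n_k}}(y)=\lim_k u_{\a_{n_k}}(x);$ hence $\tu$ is the uniform-on-compacts limit, and it is continuous and nonnegative.

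Next I would let $k\to+\infty$ in the normalized equation along $\{\a_{n_k}\}.$ The left-hand side tends to $w+\tu(x),$ using \eqref{stat:1} and $(1-\a_{n_k})m_{\a_{n_k}}\to w$ from \eqref{eqn:bar w = underline w}--\eqref{eqsfkz}. For the ``$\le$'' inequality I fix $a\in\A;$ since the minimum is at most the value at $a,$ dominated convergence — licensed by $u_{\a_{n_k}}\le U$ with $\int_\X U(y)q(dy|x,a)<+\infty$ (Assumption~\textbf{EC}(ii)) and by $u_{\a_{n_k}}\to\tu$ — gives $w+\tu(x)\le c(x,a)+\int_\X\tu(y)q(dy|x,a),$ and taking the infimum over $a$ yields $w+\tu(x)\le\min_{a\in\A}[\,c(x,a)+\int_\X\tu(y)q(dy|x,a)\,].$ The reverse ``$\ge$'' inequality is precisely the average-cost optimality inequality valid under Assumptions~\textbf{W*} and \textbf{B} (Feinberg et al.~\cite[Corollary~2]{FKZ12}; cf.\ \eqref{eqn:IC ACOE-1}), so the two combine to the equality in \eqref{EQN11}. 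A minimizing stationary selector $\phi$ exists because $(x,a)\mapsto c(x,a)+\int_\X\tu(y)q(dy|x,a)$ is $\K$-inf-compact and lower semicontinuous under Assumption~\textbf{W*} (weak continuity of $q$ together with continuity and nonnegativity of $\tu$), so the minimum is attained and admits a measurable selector. Finally, for any stationary $\phi$ obeying \eqref{EQN11}, iterating the equation along the trajectory, applying $\E_x^\phi,$ and telescoping gives $\tfrac1N\sum_{t=0}^{N-1}\E_x^\phi[c(x_t,a_t)]=w+N^{-1}(\tu(x)-\E_x^\phi[\tu(x_N)]);$ since $\tu\ge0$ the last term is at most $N^{-1}\tu(x)\to0,$ whence $w^\phi(x)\le w=w^*,$ and \eqref{eqsfkz} forces $w^\phi(x)=w^*=w^{\rm ac}(x),$ i.e.\ $\phi$ is average-cost optimal.

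The step I expect to be the main obstacle is the passage to the limit in the minimization, in two guises. On the ``$\ge$'' side the action set $\A$ is noncompact, so a bare Fatou argument need not preserve the infimum; this is exactly where the $\K$-inf-compactness of Assumption~\textbf{W*} enters, keeping the discounted minimizers in a fixed compact set so that a limiting action exists and lower semicontinuity applies — this is the technical heart of Feinberg et al.~\cite[Corollary~2]{FKZ12}. On the ``$\le$'' side and in \eqref{stat:1}, the interchange of the limits $k\to+\infty$ and $y\to x$ (and the limit inside the integral) is controlled solely by Assumption~\textbf{EC}: equicontinuity makes the $\liminf$-defined $\tu$ an honest continuous limit, and the domination makes the integral term converge; without either, only the one-sided ACOI survives and the ACOE may fail.
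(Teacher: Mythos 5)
This theorem is imported verbatim from Feinberg and Liang~\cite{FLi16b} and the present paper gives no proof of it, so the only meaningful comparison is with the cited source; your argument is precisely the standard vanishing-discount proof used there. It is correct: Arzel\`a--Ascoli plus equicontinuity collapses the $\liminf$ in \eqref{EQN1} to a genuine locally uniform limit, domination by $U$ from Assumption~\textbf{EC}(ii) justifies passing to the limit inside the integral for the ``$\leq$'' direction, the ACOI of Feinberg et al.~\cite{FKZ12} supplies ``$\geq$'' (correctly identified as the place where $\K$-inf-compactness does the work on the noncompact action set), and the telescoping argument with $\tu\geq 0$ yields average-cost optimality of any stationary policy satisfying \eqref{EQN11}.
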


The following theorem shows that the equicontinuity conditions stated in Theorem~\ref{thm:acoe} holds for the inventory  model with holding/backlog costs satisfying quasiconvexity assumptions.

\begin{theorem}\label{thm:u a equicont}
Let Assumption~\ref{assum:qcall} hold. Then for each $\b \in (\a^*, 1),$
the family of functions $\{u_{\a}\}_{\a\in [\b,1)}$ is equicontinuous on $\X.$
\end{theorem}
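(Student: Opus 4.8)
The plan is to exploit the explicit $(\sa,\Sa)$ structure of the discounted value functions obtained in Theorem~\ref{thm:qcall:t} and to propagate the continuity of the single, $\a$-independent function $x\mapsto\E[h(x-D)]$ through the renewal process that governs the no-order phases. First I would reduce the claim to the transformed model. Since $\bva(x)=\va(x)+\c x$ and $\ua(x)=\va(x)-\ma$, the families $\{\ua\}$ and $\{\bva\}$ differ, for each fixed $\a$, by the single Lipschitz function $\c x$ together with the constant $\ma$; hence $\{\ua\}_{\a\in[\b,1)}$ is equicontinuous on $\X$ if and only if $\{\bva\}_{\a\in[\b,1)}$ is. By Theorem~\ref{thm:qcall:t} together with Lemma~\ref{lm:samevalues}, the transformed value function is constant, equal to $K+\bGa(\Sa)=K+\min_{z}\bGa(z)$, on $(-\infty,\sa)$, while on $[\sa,+\infty)$ it satisfies the renewal recursion $\bva(x)=\E[\h(x-D)]+\a\E[\bva(x-D)]$. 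Thus all of the $\a$-dependence of the shape of $\bva$ is carried by this recursion and by the location of the threshold $\sa$.

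Second, I would show that the thresholds are confined to a fixed compact interval uniformly in $\a\in[\b,1)$, i.e.\ that there are finite numbers $\underline s\le\overline S$ with $\underline s\le\sa\le\ra\le\Sa\le\overline S$ for all such $\a$. The upper bounds follow from Theorem~\ref{thm:qcall:t}\eqref{thm:qcall:t:3} once $\barS_\a$ is bounded uniformly, and the uniform lower bound $\sa\ge\underline s$ follows from condition \eqref{assum:qcall2} of Assumption~\ref{assum:qcall} together with the uniform coercivity of $\E[\h(x-D)]=\E[h(x-D)]+(1-\a)\c x+\a\c\E[D]$ as $(1-\a)$ ranges over the compact set $[0,1-\b]$, and from the boundedness of limit points of optimal thresholds recorded in Theorem~\ref{thm:fle}. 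This uniform lower bound keeps the expected length of a no-order phase, namely $1+\E[\textbf{N}(x-\sa)]\le 1+\E[\textbf{N}(x-\underline s)]$, finite and uniformly bounded on any bounded set of states, which is the quantitative input that replaces the exponential decay of $\a^t$.

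Third, fix a state $x_0$ and take $x\le y$ near $x_0$ with $\delta:=y-x$. I would unfold $\bva(y)-\bva(x)$ along the recursion, driven by the common demand sequence, up to the first epoch at which the lower trajectory $x-\bS_t$ drops below $\sa$. This expresses $\bva(y)-\bva(x)$ as the expectation of a sum of at most $\textbf{N}(x-\sa)+1$ holding-cost differences $\a^t\bigl(\E[\h(y-\bS_t-D)]-\E[\h(x-\bS_t-D)]\bigr)$, plus boundary terms arising while $y-\bS_t\ge\sa>x-\bS_t$. Each holding-cost difference equals $\bigl(\E[h(z+\delta-D)]-\E[h(z-D)]\bigr)+(1-\a)\c\delta$ evaluated at $z=x-\bS_t\in[\sa,x]\subset[\underline s,x_0+1]$; on this fixed compact interval the map $z\mapsto\E[h(z-D)]$ is uniformly continuous, so each difference is at most $\omega(\delta)+(1-\b)\c\delta$ for a modulus $\omega$ independent of $\a$. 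Summing over the at most $1+\E[\textbf{N}(x_0+1-\underline s)]$ terms yields a bound that tends to $0$ as $\delta\to0$ uniformly in $\a\in[\b,1)$; the degenerate cases in which $x$ or $y$ already lies below $\sa$ are absorbed into the boundary estimate. Combining these contributions produces a common modulus of continuity for $\{\bva\}_{\a\in[\b,1)}$, hence equicontinuity.

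The main obstacle is the uniformity in $\a$ as $\a\uparrow1$: because $\a^t\to1$ the discount factor supplies no decay, so the whole argument rests on the renewal process terminating the unfolding after a number of steps whose expectation is finite and bounded uniformly in $\a$, which is exactly where the uniform lower bound $\sa\ge\underline s$ and the finiteness $\E[\textbf{N}(t)]<+\infty$ guaranteed by $\PR(D>0)>0$ are indispensable. The most delicate estimate is the control of the boundary epochs, where one trajectory has already re-ordered while the other has not: there the upper trajectory satisfies $\sa\le y-\bS_t<\sa+\delta$, so each such term is $\bGa(y-\bS_t)-\bGa(\sa)$, and I would bound it by a short self-referential inequality in which the only uncontrolled contribution is weighted by $\PR(0<D<\delta)\to0$ and by $\PR(D=0)<1$, so that $\bGa(y-\bS_t)-\bGa(\sa)=O\bigl(\omega(\delta)\bigr)$ uniformly in $\a$. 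This is precisely the point at which quasiconvexity, rather than the convexity used in Feinberg and Liang~\cite{FLi16b}, forces one to argue through the $(\sa,\Sa)$ structure and the continuity of $\E[h(\cdot-D)]$ in place of $K$-convexity.
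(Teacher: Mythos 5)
Your proposal is correct and follows essentially the same route as the paper's proof: uniform-in-$\a$ confinement of the thresholds $\sa$ to a fixed compact interval, unfolding of the value-function difference along the renewal epochs $\textbf{N}(\cdot)$ with the sum of holding-cost differences controlled by the uniform continuity of $\E[h(\cdot-D)]$ on compacts and Wald's identity, and a self-referential boundary estimate near $\sa$ whose uncontrolled part is weighted by $\PR(0<D<\delta)$ and $(1-\PR(D=0))^{-1}$, exactly as in the paper's inequality \eqref{eqn:dif ua ubd}. Working with $\bva$ rather than $\ua$ directly is only a cosmetic difference, since the two families differ by the Lipschitz function $\c x$ plus constants.
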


\begin{proof}
We first follow the same procedure as in the proof in Feinberg and Liang~\cite[Theorem~4.9(a)]{FLi16b} to prove that, for each $\b\in (\a^*,1),$ the family of functions
$\{u_{\a}\}_{\a\in[\b,1)}$ is equicontinuous on $(-\infty,M]$ for any given $M.$

According to Theorem~\ref{thm:qcall}, since the $(\sa,\Sa)$ policies are optimal, the arguments provided to
prove Equation (4.38) in the proof in Feinberg and Liang~\cite[Theorem~4.9(a)]{FLi16b} imply that
there exist constants $b>0$ and $\delta_0 > 0$ such that, for each $\b\in(\a^*,1),$ $s_{\a}\in (-b,b)$ and
\begin{align}
	-b\leq s_{\a} - \delta_0 < s_{\a} + \delta_0\leq b , \qquad \a\in [\b,1) .
	\label{eqn:sab}	
\end{align}

For each $\b\in (\a^*,1),$ let $\a\in [\b,1).$ Consider $M > b,$ $z_1$ and $z_2$ satisfying $M > z_1,z_2\geq s_{\a}.$ Without loss of generality, assume that $z_1<z_2.$
With $\a_n$ replaced with $\a,$ the arguments provided
before Equation (4.20) in the proof in Feinberg and Liang~\cite[Lemma 4.7]{FLi16b} imply that
\begin{align}
\begin{split}
	& |u_{\a}(z_1) - u_{\a}(z_2)| = |v_{\a}(z_1) - v_{\a}(z_2)|  \\
	= & {\Big |}\E[\sum_{j=1}^{\textbf{N}(z_1-s_{\a})+1} \a^{j-1}(\tilde{h}(z_1-\textbf{S}_{j-1}) - \tilde{h}(z_2-\textbf{S}_{j-1}))  \\
	& + \a^{\textbf{N}(z_1-s_{\a})+1} (v_{\a}(z_1-\textbf{S}_{\textbf{N}(z_1-s_{\a})+1}) - v_{\a}(z_2-\textbf{S}_{\textbf{N}(z_1-s_{\a})+1}) )] {\Big |}  \\
	\leq & \E[\sum_{j=1}^{\textbf{N}(M+b)+1} |\tilde{h}(z_1-\textbf{S}_{j-1}) - \tilde{h}(z_2-\textbf{S}_{j-1})|]   \\
	& +  \E[ |u_{\a}(z_1-\textbf{S}_{\textbf{N}(z_1-s_{\a})+1}) - u_{\a}(z_2-\textbf{S}_{\textbf{N}(z_1-s_{\a})+1})|] ,
	\end{split}
	\label{eqn:ubd2}
\end{align}
where $\tilde{h} (x) := \E[h(x-D)],$  the inequality holds because of $\a < 1,$ in view of the standard properties of expectations and absolute values,
and because $-b < s_{\a}\leq z_1 < M.$ Recall that the function $\tilde{h} (x)$ is continuous
and finite. Therefore, the function $\tilde{h}$ is uniformly continuous on the closed interval $[-(M+2b), M].$
In addition, Assumption~\ref{assum:qcall} implies that
the function $\tilde{h}$ is quasiconvex.

Since $-(M+2b) < x-\textbf{S}_{j-1} < M$ for all $x\in [-b,M]$ and $j=1,2,\ldots,\textbf{N}(M+b)+1,$
the nonnegativity and quasiconvexity of $\tilde{h}$ imply that for all $x\in (-b,M)$
\begin{align}
	0\leq \tilde{h}(x-\textbf{S}_{j-1}) \leq \max \{ \tilde{h}(-(M+2b)),\tilde{h}(M) \}.
	\label{eqn:th1}
\end{align}
Furthermore, for $-b < z_1<z_2 < M,$
\begin{align}
\begin{split}
	&\E[\sum_{j=1}^{\textbf{N}(M+b)+1} |\tilde{h}(z_1-\textbf{S}_{j-1}) - \tilde{h}(z_2-\textbf{S}_{j-1}) |]
	 \\
	\leq &\E[\sum_{j=1}^{\textbf{N}(M+b)+1} \max \{ \tilde{h}(-(M+2b)),\tilde{h}(M) \}]
	 \\
	\leq &\E[\textbf{N}(M+b)+1]\max \{ \tilde{h}(-(M+2b)),\tilde{h}(M) \} <  +\infty,
	\end{split}
	\label{eqn:thfinite}
\end{align}
where the first inequality follows from \eqref{eqn:th1}, the second one follows from Wald's identity,
and the last one follows from the finiteness of the function $\tilde{h}.$
Therefore, for $-b < z_1<z_2 < M,$
\begin{align}
\begin{split}
	& \lim_{z_1\to z_2} \E[\sum_{j=1}^{\textbf{N}(M+b)+1} |\tilde{h}(z_1-\textbf{S}_{j-1}) - \tilde{h}(z_2-\textbf{S}_{j-1}) |] \\
	= & \E[\sum_{j=1}^{\textbf{N}(M+b)+1} \lim_{z_1\to z_2} |\tilde{h}(z_1-\textbf{S}_{j-1}) - \tilde{h}(z_2-\textbf{S}_{j-1}) |] = 0,
\end{split}
	\label{eqn:thlimit}
\end{align}
where the first equality follows from \eqref{eqn:thfinite} and Lebesgue's dominated convergence theorem,
and the second one follows from the continuity of $\tilde{h}.$

Consider $\eps > 0.$ In view of \eqref{eqn:thlimit}, since $-b < z_1<z_2<M,$ there exists
$\delta_1 \in (0,\delta_0)$ such that for $s_{\a}\leq z_1<z_2$ satisfying $|z_1 - z_2|<\delta_1$
\begin{align}
	\E[\sum_{j=1}^{\textbf{N}(M+b)+1} |\tilde{h}(z_1-\textbf{S}_{j-1}) - \tilde{h}(z_2-\textbf{S}_{j-1})|] 	
	\leq  \frac{\eps}{2}.
	\label{eqn:ubd2-1}
\end{align}

Additional arguments are needed to estimate the last term in \eqref{eqn:ubd2}.
Next we prove that there exists $\delta_2\in (0,\delta_1)$ such that for $x\in [\sa,\sa+\delta_2],$
\begin{align}
	|\ua (x) - \ua (\sa)| < \frac{\epsilon}{4}.
	\label{eqn:ubd ux uan}
\end{align}

Let $x\ge s_\a.$ Then
\begin{equation}\va (x ) =
		\tilde{h}(x) + \a \E[\va (x-D)] \label{firstfrom423}
\end{equation}
and
\begin{align}
\begin{split}
&\E[\va(x-D)] =
\PR(D\geq x - \sa) \E[\bar{c}(\sa - x + D)|D\geq x - \sa]  \\
&+ \PR(0<D< x - \sa) \E[\va (x-D)|0<D< x - \sa]
+ \PR(D = 0) \va (x)
\end{split}
\label{secondfrom423}
\end{align}
Formulas \eqref{firstfrom423} and \eqref{secondfrom423} imply
\begin{align}
[1&-\a \PR(D=0)]	\va (x)  =  \tilde{h}(x) + \a (\PR(D\geq x - \sa) \E[\bar{c}(\sa - x + D)|D\geq x - \sa] \nonumber \\
& + \PR(0<D< x - \sa) \E[\va (x-D)|0<D< x - \sa]).
	\label{eqn: ua-1}
\end{align}
 Therefore, since $\ua (y_1) - \ua (y_2) = \va (y_1) - \va (y_2)   $ for all $y_1,y_2\in\X,$ for $x\in [\sa,\sa+\delta_1]$
\begin{align}
\begin{split}
	& [1-\a\PR(D=0)]|\ua (x) - \ua (\sa)| = [1-\a\PR(D=0)]|\va (x) - \va (\sa)|  \\
	&=   \Big |\tilde{h}(x)-\tilde{h}(\sa) +\a\PR(D\geq x - \sa)\bar{c}(\sa - x)  \\ & +\a\PR(0<D< x - \sa)\E[\ua (x-D)
	-\ua (\sa-D)|0<D< x - \sa]   \Big |  \\
	&\leq  |\tilde{h}(x)-\tilde{h}(\sa)|+\bar{c}(x-\sa)+ 2 \PR(0<D< x-\sa) C(-b,b),
\end{split}
\label{eqn:dif ua ubd}
\end{align}
where the nonnegative function $C$ is defined in Lemma~\ref{lm:U(x) bounded}. Let us define $Q_1:= (1-\PR(D=0))^{-1},$ and $Q_2(x,\sa ):=\PR(0<D< x-\sa).$   Recall that $\PR(D>0)>0, $ which is equivalent to $\PR(D=0)<1.$ Since $(1-\a\PR(D=0))^{-1}\le Q_1,$  formula \eqref{eqn:dif ua ubd} implies that
\begin{equation}
|\ua (x) - \ua (\sa)|\le Q_1(|\tilde{h}(x)-\tilde{h}(\sa)|+\bar{c}(x-\sa) + 2 Q_2(x,\sa ) C(-b,b)).
\end{equation}
Since the function $\tilde{h}$ is uniform continuous on the interval $[-(M+2b),M],$ all three summands in the right-hand side of the last equations converge uniformly in $\a$ to 0 as $x\downarrow \sa.$  Therefore, there exists $\delta_2\in (0,\delta_1)$ such that \eqref{eqn:ubd ux uan} holds for all $x\in [\sa,\sa+\delta_2].$

Since $\ua  (x) = \bar{c}(\sa - x)  + \ua  (\sa)$ for all $x\leq \sa,$ then for all $x,y\le \sa$
\begin{align}
	|\ua (x) - \ua (y)| =\bar{c}|x-y|< \frac{\epsilon}{4},
	\label{eqn:ubd u x < san}
\end{align}
for $|x-y|<\frac{\epsilon}{4{\bar c}}.$ Let $\delta_3:=\min\{\frac{\epsilon}{4{\bar c}},\delta_2\}.$ Then \eqref{eqn:ubd u x < san} holds for $|x-y|<\delta_3.$

For $x \leq \sa \leq y$ satisfying $|x-y|< \delta_3$
\begin{align}
	|\ua (x) - \ua (y)| \leq |\ua (x) - \ua (\sa)| + |\ua (\sa) - \ua (y)|
	< \frac{\epsilon}{2},
	\label{eqn:ubd x < san < y}
\end{align}
where the first inequality is the triangle property and the second one follows from \eqref{eqn:ubd ux uan} and \eqref{eqn:ubd u x < san}.
Therefore, \eqref{eqn:ubd ux uan}, \eqref{eqn:ubd u x < san} and \eqref{eqn:ubd x < san < y} imply that $|\ua (x) - \ua (y)| <\frac{\epsilon}{2}$ for all $x,y \leq \sa + \delta_3$ satisfying $|x-y|< \delta_3.$
Then for $|z_1-z_2|< \delta_3$ with probability $1$
\[
|\ua (z_1-\textbf{S}_{\textbf{N}(z_1-\sa) + 1}) - \ua (z_2-\textbf{S}_{\textbf{N}(z_1-\sa) + 1})|  < \frac{\epsilon}{2},
\]
and therefore
\begin{align}
	 \E[|\ua (z_1-\textbf{S}_{\textbf{N}(z_1-\sa) + 1}) - \ua (z_2-\textbf{S}_{\textbf{N}(z_1-\sa) + 1})|]  < \frac{\epsilon}{2},
	\label{eqn:ubd2-2}
\end{align}
Formulae \eqref{eqn:ubd2}, \eqref{eqn:ubd2-1}. and  \eqref{eqn:ubd2-2} imply that for $z_1,$ $z_2\geq \sa$ satisfying $|z_1-z_2|< \delta_3$
\begin{align}
	|\ua (z_1) - \ua (z_2) | < \epsilon .
	\label{eqn:ubd x12>san}
\end{align}

Therefore, \eqref{eqn:ubd u x < san}, \eqref{eqn:ubd x < san < y}, and \eqref{eqn:ubd x12>san} imply that
for all $x,$ $y \in (-\infty, M)$ satisfying $|x-y|<\delta_3$
\begin{align*}
	|u_{\a}(x) - u_{\a}(y) | < \eps .
\end{align*}
Since $M$ can be chosen arbitrarily large, for each $\b\in (\a^*,1)$ the family
of functions $\{u_{\a}\}_{\a\in[\b,1)}$ is equicontinuous on $\X.$
\end{proof}

\begin{theorem}\label{thm:Gacont}
	Let Assumption~\ref{assum:qcall} hold. Then for $\a \in (\a^*, 1),$ the functions $\va$ and
	$\Ga$ are continuous on $\X.$
\end{theorem}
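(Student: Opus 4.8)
The plan is to deduce the continuity of $\va$ and $\Ga$ directly from the equicontinuity established in Theorem~\ref{thm:u a equicont}, together with the lower semicontinuity already available from Assumption~\textbf{W*}. First I would recall that, by Theorem~\ref{thm:u a equicont}, for each $\b\in(\a^*,1)$ the family $\{u_{\a}\}_{\a\in[\b,1)}$ is equicontinuous on $\X.$ Equicontinuity of a family at a point implies continuity of each member of the family at that point: fixing a single $\a\in(\a^*,1)$ and choosing $\b=\a,$ the function $u_{\a}$ belongs to the equicontinuous family and is therefore continuous on $\X.$ Since $\va(x)=u_{\a}(x)+m_{\a}$ by the definition in \eqref{defmauaw} and $m_{\a}$ is a finite constant (finiteness following from Assumption~\textbf{B}(i), which holds by Theorem~\ref{thm:B}), the continuity of $\va$ on $\X$ is immediate.

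Having established continuity of $\va,$ the continuity of $\Ga$ should follow from its defining formula \eqref{eqn:ga}, namely $\Ga(x)={\bar c}x+\E[h(x-D)]+\a\E[\va(x-D)].$ The terms ${\bar c}x$ and $\E[h(x-D)]$ are continuous by the model's standing assumptions, so the task reduces to showing that $x\mapsto\E[\va(x-D)]$ is continuous. The natural tool here is dominated convergence: for a sequence $x_k\to x,$ one has $\va(x_k-D)\to\va(x-D)$ pointwise (in $D$) by the continuity of $\va$ just proved, and to pass the limit inside the expectation I would invoke a dominating function. Lemma~\ref{lm:U(x) bounded} provides exactly the needed bound, since $\ua\le U$ and $\E[U(x-D)]<+\infty;$ thus $\va(x-D)=u_{\a}(x-D)+m_{\a}$ is dominated in absolute value by $U(x-D)+|m_{\a}|$ on a neighborhood of $x,$ which is integrable after taking a uniform dominating bound over the neighborhood using part (ii) of that lemma. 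Applying Lebesgue's dominated convergence theorem then yields continuity of $\E[\va(x-D)]$ and hence of $\Ga.$

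The main obstacle I anticipate is the construction of a uniform integrable dominating function for the dominated-convergence step: as $x_k$ ranges over a neighborhood of $x,$ the argument $x_k-D$ ranges over a shifted window, so I cannot use $U(x-D)$ for a single fixed $x$ but must bound $\va(x_k-D)$ uniformly over all $k.$ This is precisely where part~(ii) of Lemma~\ref{lm:U(x) bounded}, asserting $C(x_*,x)=\sup_{y\in[x_*,x]}U(y)<+\infty,$ together with part~(iii), $\E[U(x-D)]<+\infty,$ becomes essential; restricting to a bounded neighborhood $[x-\delta,x+\delta]$ and using monotonicity or the structure of $U$ in \eqref{eqn:def U(x)} gives an integrable dominator valid for the whole sequence. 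Once that bound is in place the argument is routine, so I expect the proof to be short, leaning almost entirely on the preceding equicontinuity theorem and the integrability estimates of Lemma~\ref{lm:U(x) bounded}.
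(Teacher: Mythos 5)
Your proposal is correct and follows essentially the same route as the paper: continuity of $\va=\ua+\ma$ is read off from the equicontinuity established in Theorem~\ref{thm:u a equicont}, and continuity of $\Ga$ is then reduced to continuity of $x\mapsto\E[\va(x-D)]$. The only difference is that the paper delegates this last step to the argument of Feinberg and Liang~\cite[Theorem~5.3]{FLi16a}, whereas you carry it out explicitly via dominated convergence with the dominating function supplied by Lemma~\ref{lm:U(x) bounded}; this is a sound, self-contained substitute.
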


\begin{proof}[Proof of Theorem~\ref{thm:Gacont}]
	According to Theorem~\ref{thm:qcall}, there exists an optimal $(\sa,\Sa)$ optimal policy for the infinite-horizon problem. In addition, Theorem~\ref{thm:u a equicont} implies that the function $\va (x) = \ua (x) + \ma$ is continuous on $\X.$ Therefore, since the function $\E[h(x-D)]$ is continuous, the same arguments in the proof of Feinberg and Liang~\cite[Theorem 5.3]{FLi16a} starting from the definition of the function $g_\a$ there imply that the function $\Ga$ is continuous on $\X.$
\end{proof}

\section{Setup-Cost Inventory  Model: Average Costs per Unit Time}
\label{sec:acoe}

The following theorem establishes the convergence of discounted-cost optimality equations to the ACOEs for the
described inventory model and the optimality of $(s,S)$ policies under the average cost criterion under
Assumption~\ref{assum:qcall}. It is proved in Chen and Simchi-Levi~\cite{CS04b} that there exists an average-cost
optimal $(s,S)$ policy if only Assumption~\ref{assum:qcalpha} for $\a = 1$ is assumed. We are interested in approximating the average-cost optimal $(s,S)$ policy from the discount-cost optimal $(\sa,\Sa)$ policies as the discount factor $\a\uparrow1.$

\begin{theorem}\label{thm:inventory:acoe}
Let Assumption~\ref{assum:qcall} hold.
For every sequence $\{\alpha_n\uparrow 1\}_{n=1,2,\ldots}$ of nonnegative discount factors with
$\alpha_1 > \a^*,$ there exist a subsequence $\{\alpha_{n_k}\}_{k=1,2,\ldots}$
of $\{\alpha_n\}_{n=1,2,\ldots},$ a stationary policy $\varphi,$ and a function $\tu$ defined in
\eqref{EQN1} for the subsequence $\{\alpha_{n_k}\}_{k=1,2,\ldots}$ such that for all $x\in\X$
\begin{align}\label{eqn:IC ACOE}
\begin{split}
  w + \tu(x)  &= K \indF_{\{\varphi(x)>0\}} + H(x+\varphi(x)) - \bar{c}x \\
  &=\min \{ \min_{a\geq 0}[ K + H(x+a)], H(x) \} - \bar{c}x,
  \end{split}
\end{align}
where the function $H$ is defined in \eqref{eqn:IC ACOE-2}. In addition, the functions $\tilde u$ and $H$ are
continuous and inf-compact, and a stationary optimal policy $\varphi$ satisfying \eqref{eqn:IC ACOE} can be selected as an $(s^*,S^*)$ policy described in Theorem~\ref{thm:fle}.  It also can be selected  as an $(s,S)$ policy with the real numbers $S$ and $s$ satisfying \eqref{eqn:def S} and  defined in   \eqref{eqn:def s} respectively for $f(x)=H(x),$ $x\in\X.$
\end{theorem}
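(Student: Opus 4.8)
The plan is to feed the facts already assembled in Sections~\ref{sec:mdptotal}--\ref{sec:mdpac} into Theorem~\ref{thm:acoe} and then to rewrite the resulting ACOE in the form \eqref{eqn:IC ACOE}. First I would verify Assumption~\textbf{EC} for the given sequence $\{\alpha_n\}$. Part~(i) is immediate from Theorem~\ref{thm:u a equicont}: since $\alpha_1>\alpha^*$, the family $\{\ua\}_{\alpha\in[\alpha_1,1)}$ is equicontinuous on $\X$, and $\{u_{\alpha_n}\}_{n\ge 1}$ is a subfamily. Part~(ii) is supplied by Lemma~\ref{lm:U(x) bounded}: the function $U$ dominates every $\ua$, and $\int_\X U(y)\,q(dy|x,a)=\E[U(x+a-D)]<+\infty$ by Lemma~\ref{lm:U(x) bounded}(iii) applied at $x+a$. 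Since Assumptions~\textbf{W*} and \textbf{B} hold (Theorems~\ref{thm:W*} and \ref{thm:B}), Theorem~\ref{thm:acoe} yields a subsequence along which $u_{\alpha_{n_k}}\to\tu$ uniformly on compact subsets of $\X$, with $\tu$ continuous, and a stationary $\phi$ satisfying the ACOE \eqref{EQN11}, every stationary solution of \eqref{EQN11} being average-cost optimal. Using the boundedness in Theorem~\ref{thm:fle}, I would pass to a common subsequence $\{\alpha_{n_k}\}$ along which, in addition, $(\sa,\Sa)\to(s^*,S^*)$.

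Next I would translate \eqref{EQN11} into \eqref{eqn:IC ACOE}. Since $q(\cdot|x,a)$ is the law of $x+a-D$, substituting $c(x,a)=K\indF_{\{a>0\}}+\c a+\E[h(x+a-D)]$ and $\int_\X\tu(y)\,q(dy|x,a)=\E[\tu(x+a-D)]$ and setting $y=x+a$ gives $c(x,a)+\int_\X\tu\,dq=K\indF_{\{a>0\}}+H(x+a)-\c x$ with $H$ as in \eqref{eqn:IC ACOE-2}. Minimizing over $a\ge0$ and using $K>0$ turns \eqref{EQN11} into exactly \eqref{eqn:IC ACOE} with $\varphi=\phi$.

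Then I would establish regularity of $\tu$ and $H$. Continuity of $\tu$ is part of Theorem~\ref{thm:acoe}; continuity of $H$ follows since $\c x+\E[h(x-D)]$ is continuous and $\E[\tu(x-D)]$ is continuous by dominated convergence, using $0\le\tu\le U$ and the local integrability of $U$ from Lemma~\ref{lm:U(x) bounded}. For inf-compactness I would use the $(\sa,\Sa)$ structure: for $x<\sa$ the discounted policy orders up to $\Sa$, so $\ua(x)=-\c x+(K+\Ga(\Sa)-\ma)$ is affine with slope $-\c$; letting $\alpha_{n_k}\uparrow1$ (with $\sa\to s^*$) gives $\tu(x)=-\c x+\bar C$ for $x\le s^*$, whence $\tu(x)\to+\infty$ and $H(x)=\E[h(x-D)]+\c\E[D]+\bar C\to+\infty$ as $x\to-\infty$, the latter because $\E[h(x-D)]\to+\infty$. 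As $x\to+\infty$, $H(x)\ge\c x\to+\infty$, and since the right-hand side of \eqref{eqn:IC ACOE} equals $\min\{K+\inf_{y\ge x}H(y),\,H(x)\}-\c x$ with $H(y)\ge\c y+\E[h(y-D)]$, this quantity tends to $+\infty$, forcing $\tu(x)\to+\infty$. Thus both $\tu$ and $H$ are continuous and inf-compact.

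Finally, for the $(s,S)$ conclusion I would transfer the discounted structure of $\Ga$ to $H$. From $\Ga=\bGa$ (Lemma~\ref{lm:samevalues}) and $\Ga-\ma\to H-w$ uniformly on compacta, the monotonicity of $\Ga$ on $(-\infty,\ra]$ (Lemma~\ref{lm:vaorder}) passes to the limit, giving that $H$ is nonincreasing on $(-\infty,r]$, where $r:=\lim_k\ra$ is finite because $\sa\le\ra\le\Sa$; likewise \eqref{eqn:Ga1} passes to $H(y)-H(x)\ge\E[h(y-D)]-\E[h(x-D)]-K$. These are precisely the ingredients of the three-case argument in the proof of Theorem~\ref{thm:qcall:t}, so the $(s,S)$ policy with thresholds from \eqref{eqn:def S}--\eqref{eqn:def s} for $f=H$ attains the minimum on the right-hand side of \eqref{eqn:IC ACOE} and hence solves the ACOE; by Theorem~\ref{thm:acoe}(ii) it is average-cost optimal, and uniform-on-compacta convergence together with inf-compactness identifies $(s,S)$ with the limit point $(s^*,S^*)$ of Theorem~\ref{thm:fle}, giving both selections. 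The main obstacle is not the ACOE itself, which drops out of Theorem~\ref{thm:acoe} once \textbf{EC} is checked, but the inf-compactness of $H$ and the transfer of the $(s,S)$ structure to the limit: one must control $\tu$ simultaneously at both ends of $\X$, and the $K$-type inequality \eqref{eqn:Ga1} degenerates at $\alpha=1$ (the factor $K-\alpha K$ vanishes), so only weak inequalities survive and the three-case argument must be run with ties permitted.
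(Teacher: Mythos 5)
Your proposal is correct and follows essentially the same route as the paper: verify Assumption \textbf{EC} via Theorem~\ref{thm:u a equicont} and Lemma~\ref{lm:U(x) bounded}, invoke the general ACOE result (Theorem~\ref{thm:acoe}, i.e.\ Feinberg--Liang), rewrite the abstract equation in the form \eqref{eqn:IC ACOE}, and obtain the $(s,S)$ structure by passing the discounted inequalities for $\Ga$ to the limit and rerunning the three-case argument of Theorem~\ref{thm:qcall:t} with weak inequalities --- which is precisely what the paper packages into Lemma~\ref{lm:Horder}. Your closing observation that the $K-\alpha K$ margin degenerates at $\alpha=1$, so the limiting argument must tolerate ties, is exactly the point the paper's replacement of the discounted argument is designed to handle.
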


\begin{remark}
{\rm
The relations between the function $\tu$ in the ACOE \eqref{eqn:IC ACOE} and the solutions to the ACOE constructed by Chen and Simchi-Levi~\cite{CS04b} are currently not clear.
}
\end{remark}

To prove Theorem \ref{thm:inventory:acoe}, we first establish several properties of the
average-cost relative value function. The proofs of the lemma and corollary presented in this section
are available in Appendix~\ref{sec:acoe:apx}.

\begin{lemma}\label{lm:Horder}
Let Assumption~\ref{assum:qcall} hold. Consider the function $\tu$
defined in \eqref{EQN1} for a sequence $\{ \a_n \}_{n=1,2,\ldots}$ such that $\alpha_n\uparrow 1$
and $\a_1>\a^*.$
Then the following statements hold:
\begin{enumerate}[(i)]
	\item For $x\leq y$
\begin{align}
	\tu  (x) + \c x  & \leq \tu  (y) + \c y + K,  \label{eqn:tu1} \\
	H(y) - H (x) & \geq \E[h(y-D)] - \E[h(x-D)] - \a K. \label{eqn:H1}
\end{align}
	\item For $x\leq y \leq \rone$
\begin{align}
	\tu (y) + \c y - \tu (x) - \c x & \leq 0,   \label{eqn:tude1} \\
	H (y) - H (x) & \leq  0. \label{eqn:Hde1}
\end{align}
\end{enumerate}
\end{lemma}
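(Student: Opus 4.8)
The plan is to pass the discounted estimates of Lemmas~\ref{lm:sS1}--\ref{lm:vaorder} to the limit along $\{\a_n\}$, using the equicontinuity established in Theorem~\ref{thm:u a equicont}. Since $\a_1>\a^*$, the family $\{u_{\a_n}\}_{n\ge1}$ is equicontinuous, which I would record two consequences of first: the double lower limit in \eqref{EQN1} collapses to $\tu(x)=\liminf_{n\to+\infty}u_{\a_n}(x)$, and $\tu$ is continuous. Moreover $0\le\tu\le U$ with $\E[U(x-D)]<+\infty$ by Lemma~\ref{lm:U(x) bounded}, so the expectations entering $H$ in \eqref{eqn:IC ACOE-2} are finite and differences may be taken inside $\E$. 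For \eqref{eqn:tu1} I would rewrite \eqref{eqn:va1} using Lemma~\ref{lm:samevalues} ($\bva(z)=\va(z)+\c z$) together with $\va(z)=u_{\a}(z)+\ma$ as $u_{\a_n}(x)+\c x\le u_{\a_n}(y)+\c y+K$ for $x\le y$; taking $\liminf_{n}$, and using that $\c x,\c y,K$ are constant in $n$ while $\liminf$ is monotone, gives $\tu(x)+\c x\le\tu(y)+\c y+K$.

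Inequality \eqref{eqn:H1} then follows from \eqref{eqn:tu1}: for $D\ge0$ and $x\le y$ one has $x-D\le y-D$, so \eqref{eqn:tu1} yields $\tu(y-D)-\tu(x-D)\ge-\c(y-x)-K$ pointwise. Taking expectations and substituting into $H(y)-H(x)=\c(y-x)+\E[h(y-D)]-\E[h(x-D)]+\E[\tu(y-D)-\tu(x-D)]$, the terms $\pm\c(y-x)$ cancel and leave $\E[h(y-D)]-\E[h(x-D)]-K$.

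Part~(ii) is the substantive part, and the main obstacle is that the discounted monotonicity \eqref{eqn:vade1} holds only on $(-\infty,\ra]$, while the claim is stated on $(-\infty,\rone]$; because adding the increasing term $(1-\a)\c x$ to the quasiconvex $\E[h(x-D)]$ shifts its smallest minimizer to the left, in general one has only $\ra\le\rone$. The key auxiliary fact I would therefore establish is $\ra\to\rone$ as $\a\uparrow1$. Fix $z_0<\rone$; by the definition of $\rone$ as the smallest minimizer one has $\E[h(z_0-D)]>\inf_{x}\E[h(x-D)]$, and since $(1-\a)\c(z_0-\rone)\to0$, for $\a$ near $1$ this forces $\E[\h(z_0-D)]>\E[\h(\rone-D)]$. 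As $\E[\h(\cdot-D)]$ is quasiconvex with smallest minimizer $\ra$, hence nondecreasing on $[\ra,+\infty)$, this strict inequality rules out $\ra<z_0$, so $\ra\ge z_0$. Letting $z_0\uparrow\rone$ gives $\liminf_{\a\uparrow1}\ra\ge\rone$, which together with $\ra\le\rone$ proves $\ra\to\rone$.

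With $\ra_n\to\rone$ in hand, I would fix $x\le y<\rone$; then $y\le\ra_n$ for all large $n$, so \eqref{eqn:vade1}, rewritten via Lemma~\ref{lm:samevalues} as $u_{\a_n}(y)+\c y\le u_{\a_n}(x)+\c x$, holds for such $n$, and passing to $\liminf_{n}$ gives $\tu(y)+\c y\le\tu(x)+\c x$; continuity of $\tu$ then extends this to $y=\rone$, which is \eqref{eqn:tude1}. Finally, \eqref{eqn:Hde1} follows as \eqref{eqn:H1} did: for $x\le y\le\rone$ and $D\ge0$ we have $x-D\le y-D\le\rone$, so \eqref{eqn:tude1} gives $\E[\tu(y-D)-\tu(x-D)]\le-\c(y-x)$, while quasiconvexity makes $\E[h(\cdot-D)]$ nonincreasing on $(-\infty,\rone]$ so that $\E[h(y-D)]-\E[h(x-D)]\le0$; adding these in the expression for $H(y)-H(x)$ yields $H(y)-H(x)\le0$.
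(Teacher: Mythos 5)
Your proposal is correct and follows the same route as the paper: both pass the discounted estimates of Lemmas~\ref{lm:sS1} and \ref{lm:vaorder} to the limit via Lemma~\ref{lm:samevalues}, with the key auxiliary fact $\ra\uparrow\rone$ as $\a\uparrow1$ (your direct comparison at points $z_0<\rone$ replaces the paper's monotone-convergence argument built on the monotonicity $\ra\ge\rb$ for $\a\ge\b,$ but the two are interchangeable, and your use of $\liminf$ rather than asserted pointwise convergence of $u_{\a_n}$ is actually the cleaner justification). One remark: the constant $-K$ you obtain in \eqref{eqn:H1} is what the definition \eqref{eqn:IC ACOE-2} of $H$ actually yields; the $-\a K$ in the displayed statement (and the stray $\a$ factors in the paper's own computation of $H(y)-H(x)$) are vestiges of the discounted analogue \eqref{eqn:Ga1}, so your version is the correct reading rather than a gap.
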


\begin{proof}[Proof of Theorem~\ref{thm:inventory:acoe}]
The proof of this thoerem is identical to the proof in Feinberg and Liang~\cite[Theorem 4.5]{FLi16b}
with
(i) Lemmas~4.6 and 4.7 there replaced with Lemma~\ref{lm:U(x) bounded} and
Theorem~\ref{thm:u a equicont} from this paper respectively;
and (ii) the proof of the $K$-convexity of the functions $u$ and $H$ and the optimality of $(s,S)$
policies under the average cost criterion replaced with the following arguments.
Consider the cases (1)-(3) in the proof of Theorem~\ref{thm:qcall:t}\eqref{thm:qcall:t:1} with
$G_{N-t-1,\a},$ $\h$ and $\ra$ replaced with $H,$ $h,$ and $\rone$ respectively. Then
Lemma~\ref{lm:Horder} implies that there exists an optimal $(s,S)$ policy,
with the real numbers $S$ and $s$ satisfying \eqref{eqn:def S} and  defined in
\eqref{eqn:def s} for $f:=H.$
\end{proof}

Furthermore, the continuity of average-cost relative value functions implies the following corollary.

\begin{corollary}\label{cor:ordering at sa}
Let Assumption~\ref{assum:qcall} hold, the state space $\X = \R,$ and the action space
$\A = \Rp.$
For the $(s,S)$ policy defined in Theorems~\ref{thm:inventory:acoe}, consider the stationary policy $\varphi$ coinciding with this policy at all $x\in\X,$ except $x=s,$ and with   $\varphi(s)=S-s.$  Then the stationary policy $\varphi$ also satisfies the optimality equation~\eqref{eqn:IC ACOE}, and is therefore average-cost optimal.
\end{corollary}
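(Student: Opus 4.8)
The plan is to reduce the claim to a single-state verification. The modified policy $\varphi$ agrees with the $(s,S)$ policy of Theorem~\ref{thm:inventory:acoe} at every state except $x=s$: there the $(s,S)$ policy does not order (action $a=0$, contributing $H(s)-\c s$ to the right-hand side of \eqref{eqn:IC ACOE}), whereas $\varphi$ orders up to $S$ (action $a=S-s>0$, contributing $K+H(S)-\c s$). Since the $(s,S)$ policy satisfies \eqref{eqn:IC ACOE} at every $x\in\X$ and $\varphi$ coincides with it off $x=s,$ it suffices to show that the action $a=S-s$ also attains the minimum on the right-hand side of \eqref{eqn:IC ACOE} at $x=s.$ Once this is established, $\varphi$ satisfies \eqref{eqn:IC ACOE} for all $x\in\X,$ and Theorem~\ref{thm:acoe}\eqref{stat:2} yields its average-cost optimality.

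Because $S\in\argmin_{x\in\X}H(x)$ and $S\geq s,$ we have $\min_{a\geq0}[K+H(s+a)]=K+\min_{y\geq s}H(y)=K+H(S),$ so the right-hand side of \eqref{eqn:IC ACOE} at $x=s$ equals $\min\{K+H(S),\,H(s)\}-\c s.$ Thus the ordering action $a=S-s$ (value $K+H(S)-\c s$) is a minimizer precisely when
\begin{align*}
	H(s)=K+H(S).
\end{align*}
I would prove this equality from the definition \eqref{eqn:def s} of $s$ with $f=H$ together with the continuity of $H$ established in Theorem~\ref{thm:inventory:acoe}. Since $H$ is continuous and inf-compact, the set $\{x\leq S:H(x)\leq K+H(S)\}$ is nonempty (it contains $S$), closed, and bounded below, so its infimum $s$ lies in it, giving $H(s)\leq K+H(S).$ For the reverse inequality I argue by contradiction: if $H(s)<K+H(S),$ then by continuity of $H$ there are points $x<s$ arbitrarily close to $s$ with $H(x)<K+H(S);$ since $s<S$ — itself a consequence of continuity at $S,$ because $H(x)\to H(S)<K+H(S)$ as $x\uparrow S$ — such $x$ satisfy $x\leq S$ and hence belong to the defining set, contradicting that $s$ is its infimum. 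Hence $H(s)=K+H(S),$ and both $a=0$ and $a=S-s$ attain the minimum at $x=s.$

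The main obstacle is exactly the equality $H(s)=K+H(S),$ and this is where the hypotheses $\X=\R,$ $\A=\Rp$ and the continuity of $H$ are indispensable. The definition of $s$ as an infimum only guarantees $H(s)\leq K+H(S)$ in general; it is the continuity proved in Theorem~\ref{thm:inventory:acoe}, available precisely because arbitrary nonnegative order quantities are allowed, that upgrades this to equality and forces the ordering action to tie with the non-ordering one at $x=s.$ On the integer lattice this argument breaks down, since $s$ need not satisfy $H(s)=K+H(S),$ and ordering at $s$ may then be strictly suboptimal; this explains the restriction to the continuous model in the statement.
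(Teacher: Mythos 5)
Your proposal is correct and follows essentially the same route as the paper: the whole matter reduces to the identity $H(s)=K+H(S),$ which both you and the authors derive from the continuity of $H$ (the paper gets it by combining $H(x)>K+H(S)$ for $x<s$ with $H(x)\le K+H(S)$ for $x\ge s$ and letting $x\uparrow s,$ while you argue directly from the infimum in \eqref{eqn:def s}; these are the same observation). Your closing remark about why the argument fails on $\X=\Z$ matches the paper's Remark~\ref{rm:mathcalGa}.
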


\section{Convergence of Optimal Lower Thresholds $\sa$}
\label{sec:cvg sa}

This section  establishes for the inventory  model with holding/backlog costs satisfying quasiconvexity assumptions the convergence of discounted optimal lower thresholds $\sa\to s$ as $\a\uparrow1,$ where $s$ the average-cost optimal lower threshold (stated in Theorem~\ref{thm:inventory:acoe}). In this and the following sections, we assume that the state space $\X=\R$ and the action
sets $\A = A(x) = \Rp$ for all $x\in\X.$ This means an arbitrary nonnegative amount of inventory can be ordered at any state.

The quasiconvexity of $\E[h(x-D)]$ assumed in Assumption~\ref{assum:qcall} implies that the function
$\E[h(x-D)]$ is nonincreasing on $(-\infty,\rone),$ where $\rone$ is defined in \eqref{eqn:def r alpha r*}.
The stronger Assumption~\ref{assum:decrease} is used in this section and Section~\ref{sec:cvg ua}.

The following theorem establishes the convergence of the discounted optimal lower thresholds
$\sa$ when the discount factor $\a$ converges to $1.$

\begin{theorem}\label{thm:limit s alpha}
	Let Assumptions~\ref{assum:qcall} and \ref{assum:decrease} hold for $\a = 1.$
	Then the limit
	\begin{align}
		s^* := \lim_{\a\uparrow 1} \sa
		\label{eqn:def s*}
	\end{align}
	exists and $s^* \leq \rone,$ where $\rone$ is defined in \eqref{eqn:def r alpha r*}.
\end{theorem}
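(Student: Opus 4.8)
\textbf{Proof plan for Theorem~\ref{thm:limit s alpha}.}

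The plan is to establish convergence by combining a subsequential-limit argument (already available through Theorem~\ref{thm:fle}) with a uniqueness argument that pins down every limit point to the same value. First I would recall that, by Theorem~\ref{thm:fle}, the family $\{\sa\}_{\a\in(\a^*,1)}$ is bounded, so every sequence $\{s_{\a_n}\}$ with $\a_n\uparrow1$ has a convergent subsequence, and each limit point $s^*$ together with the corresponding $S^*$ defines an average-cost optimal $(s^*,S^*)$ policy satisfying the optimality inequality \eqref{eqn:IC ACOE-1}. The bound $\sa\le\ra$ from \eqref{eqn:ubdStaSa} in Theorem~\ref{thm:qcall:t}\eqref{thm:qcall:t:3}, passed to the limit together with the convergence $\ra\to\rone$ as $\a\uparrow1$ (which follows from the continuity and quasiconvexity of $\E[\h(x-D)]$ and the fact that $h_\a\to h_1$), yields $s^*\le\rone$. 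So the inequality $s^*\le\rone$ is the easy part and holds for every limit point.

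The crux is therefore uniqueness of the limit point, i.e.\ showing that all subsequential limits coincide, which upgrades the $\limsup/\liminf$ gap to a genuine limit in \eqref{eqn:def s*}. The strategy I would use is to characterize $s^*$ intrinsically through the average-cost objects. Using Theorem~\ref{thm:inventory:acoe}, any limit point $s^*$ is an average-cost lower threshold defined by \eqref{eqn:def s} with $f=H$, namely $s^*=\inf\{x\le S : H(x)\le K+H(S)\}$ where $S$ satisfies \eqref{eqn:def S}. The point is to show this quantity is uniquely determined. Here the strengthened Assumption~\ref{assum:decrease} (with $\a=1$) is essential: it forces $\E[h(x-D)]$ to be \emph{strictly} decreasing on $(-\infty,\rone]$, and through Lemma~\ref{lm:Horder}(ii), which gives $H$ nonincreasing on $(-\infty,\rone]$, one upgrades this to strict monotonicity of $H$ on that interval. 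Strict monotonicity means the level $K+H(S)$ is attained by $H$ at a single point on $(-\infty,\rone]$, so the infimum in \eqref{eqn:def s} is uniquely pinned down and $s^*$ does not depend on the chosen subsequence.

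Concretely I would argue as follows. Take two subsequences of $\{\a_n\}$ with limits $s_1^*$ and $s_2^*$; both are $\le\rone$ and both satisfy $H(s_i^*)=K+\inf_x H(x)$ on $(-\infty,\rone]$ (the boundary value of the threshold definition, using continuity of $H$ from Theorem~\ref{thm:inventory:acoe} and the defining relation \eqref{eqn:def s}). Since $H$ is strictly decreasing on $(-\infty,\rone]$, the equation $H(x)=K+\inf_x H$ has at most one solution in that interval, forcing $s_1^*=s_2^*$. As the bounded family has a unique limit point, the full limit $\lim_{\a\uparrow1}\sa$ exists and equals this common value $s^*$. The main obstacle I anticipate is the transfer of strict monotonicity from $\E[h(x-D)]$ to the average-cost function $H$: Lemma~\ref{lm:Horder} only delivers the weak inequality \eqref{eqn:Hde1}, so I would need to sharpen it, tracing through the definition \eqref{eqn:IC ACOE-2} of $H$ and the relation \eqref{eqn:tude1} for $\tu$ to verify that the strict decrease of $\E[h(x-D)]$ is not cancelled by the $\E[\tu(x-D)]$ term. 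This likely requires showing $\tu(x)+\c x$ is nonincreasing (from \eqref{eqn:tude1}) while the holding-cost part strictly decreases, so their sum remains strictly decreasing on $(-\infty,\rone]$.
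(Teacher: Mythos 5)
Your boundedness step and the bound $s^*\le\rone$ match the paper, and your observation that Assumption~\ref{assum:decrease} upgrades $H$ to being \emph{strictly} decreasing on $(-\infty,\rone]$ is correct (the $\E[\tu(x-D)+\c(x-D)]$ term is nonincreasing by \eqref{eqn:tude1} while $\E[h(x-D)]$ strictly decreases, so their sum strictly decreases). The problem is in the uniqueness step. Your argument shows that, \emph{for a fixed subsequence}, the level $K+H(S)$ is attained at a single point of $(-\infty,\rone]$. But the function $H$ in \eqref{eqn:IC ACOE-2} is built from $\tu$, which is defined in \eqref{EQN1} relative to the chosen subsequence; at the point in the paper where Theorem~\ref{thm:limit s alpha} is proved, nothing guarantees that two different subsequences produce the same $\tu$ and hence the same $H$ (Example~\ref{ex:mdplimit} shows this can genuinely fail under Assumptions~\textbf{W*} and \textbf{B} alone). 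Two subsequential limits $s_1^*$ and $s_2^*$ would then solve $H_1(x)=K+\inf H_1$ and $H_2(x)=K+\inf H_2$ respectively --- two \emph{different} equations --- and strict monotonicity of each $H_i$ does not force $s_1^*=s_2^*$. Worse, the subsequence-independence of $H$ is exactly Corollary~\ref{cor:inventory:acoe}, which is derived from Theorem~\ref{thm:ua converge}, whose proof (via Lemma~\ref{lm:bua pointwise converge}) uses Theorem~\ref{thm:limit s alpha}. So your route is circular as stated.

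What is missing is an \emph{intrinsic} characterization of every limit point, i.e.\ one expressed through quantities that do not depend on the subsequence. The paper gets this from Lemma~\ref{lm:sa}, which gives $(1-\a)(\bma+K)=\E[\h(\sa-D)]$, combined with Lemma~\ref{lm:limit m*}, which gives $\lim_{\a\uparrow1}(1-\a)\bma=w$; together they force $\E[h(s^{(i)}-D)]=w-\c\E[D]$ at every subsequential limit $s^{(i)}$, where $w$ is the (subsequence-independent) optimal average cost. Since every limit point lies in $(-\infty,\rone]$ by Lemma~\ref{lm:r alpha <= r*} and $\E[h(\cdot-D)]$ is strictly decreasing there by Assumption~\ref{assum:decrease} with $\a=1$, two distinct limit points cannot both satisfy this identity, and uniqueness follows. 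If you want to salvage your $H$-based route, you would need to extract the analogous identity $\E[h(s^*-D)]=w-\c\E[D]$ from the ACOE for each subsequence's $H$ (as is done later in the proof of Theorem~\ref{thm:uniqueac}, see \eqref{eqn:Hy2}), at which point the strict monotonicity is applied to $\E[h(\cdot-D)]$ rather than to $H$ and the subsequence dependence disappears.
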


\begin{remark}\label{rm:s}
{\rm
As shown in Corollary~\ref{cor:inventory:acoe}, if Assumptions~\ref{assum:qcall} and
\ref{assum:decrease} hold for $\a=1,$  then all the sequences $\{\a_n\uparrow1\}_{n=1,2,\ldots}$ define
the same functions $\tilde{u}$ and $H$ in \eqref{eqn:IC ACOE}, and, according to Theorem~\ref{thm:uniqueac},
there exists a unique threshold $s^*,$ for which there is an $(s^*,S^*)$ policy satisfying the ACOE \eqref{eqn:IC ACOE}.
}
\end{remark}

Before the proof of Theorem~\ref{thm:limit s alpha}, we first state several auxiliary facts.
Consider the infinite-horizon value function $\bva$ for the model with zero unit and terminal costs. According to
Lemma~\ref{lm:samevalues}, $\bva (x) - \c x = \va (x),$ $x\in\X.$ For $x\in\X,$ define
\begin{align}
\bma := \min_{x\in\X} \{ \bva (x) \} \quad \text{and} \quad \bua (x) := \bva (x) - \bma.
\label{eqn:def bua}
\end{align}

If there exists an $\a$-discount optimal $(\sa,\Sa)$ policy, then \eqref{eqn:va:trans} can be written as
\begin{align}
	\bva (x) =
	\begin{cases}
		\Ga (x)  				& \text{if } x \geq \sa ,  \\
		K + \Ga (\Sa)  	& \text{if } x < \sa ,  \\
	\end{cases}
	\label{eqn:bva sS}
\end{align}
which implies that
\begin{align}
	\bma = \min_{x\in\X} \{ \Ga (x) \} = \Ga (\Sa).
	\label{eqn:bma}
\end{align}

Consider $x_{\a} \in X_{\a},$ where $X_{\a}$ is defined in \eqref{eq:boundmaxlul}. For $\a\in[0,1)$
\begin{align}
	\bma \leq \bva (x_{\a}) = \ma + \c x_{\a} \leq \ma + \c x_U^* ,
	\label{eqn:ubd bma}
\end{align}
where  $x^*_U$ is defined in \eqref{eq:boundmaxlul}.
In view of \eqref{eqn:bva sS}, the continuity of $\bva(x)$ implies that $\bva(x) = \bva(\sa)$ for all $x \leq \sa.$
Therefore,
\begin{align}
	\bma = \inf_{x\geq \sa} \bva(x) = \inf_{x\geq \sa} \{ \va (x) + \c x \}  \geq \inf_{x\geq \sa} \{ \va (x) + \c \sa \} \geq \ma + \c \sa,
	\label{eqn:lbd bma}
\end{align}
where the first inequality holds because $x\geq \sa$ and the last one follows from
$\ma = \inf_x \va (x).$
Then \eqref{eqn:ubd bma} and \eqref{eqn:lbd bma} imply
\begin{align}
	\ma + \c \sa \leq \bma \leq \ma + \c x_U^*.
	\label{eqn:bounds on bma}
\end{align}

For $\a\in (\a^*,1)$ define the set of all possible optimal discounted lower thresholds
\begin{align}
	\mathcal{G}_\alpha := \{ x \leq \Sa : \Ga (y) = K + \Ga (\Sa) \text{ for all } y\in[\sa,x] \},
	\label{eqn:set of sa}
\end{align}
where $\Sa$ satisfies \eqref{eqn:def S} and $\sa$ is defined in \eqref{eqn:def s} with $f:=\Ga.$
Note that $\sa\in \mathcal{G}_\a$ and $y \geq \sa$ for all $y\in\mathcal{G}_\a.$

\begin{remark}\label{rm:mathcalGa}
{\rm
The set $\mathcal{G}_\a$ is not empty if $\X=\R$ because the function $\Ga $ is continuous (see Theorem~\ref{thm:Gacont}) and $\lim_{x\to-\infty} \Ga (x) > K + \Ga (\Sa).$ If $\X = \Z,$ as this takes place for problems with discrete commodity, it is possible that $\mathcal{G}_\a$ is an empty set.
}
\end{remark}

The following three lemmas state the relations between parameters defined in this section. The proofs of
the lemmas presented in this section are available in Appendix~\ref{sec:cvg sa:apx}.

\begin{lemma}\label{lm:sa}
Let Assumption~\ref{assum:qcall} hold.
Then, for all $\a\in (\a^*,1)$ and $y \in \mathcal{G}_\a,$
	\begin{align}
		(1-\a)(\bma + K) = \E[\h (y - D)].
		\label{eqn:value and ha}
	\end{align}
\end{lemma}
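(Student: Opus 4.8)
The plan is to exploit the fact that membership $y \in \mathcal{G}_\a$ forces the infinite-horizon value function $\bva$ of the transformed (zero unit and terminal cost) model to be constant on the entire half-line $(-\infty, y]$, and then to read off the claimed identity directly from the optimality equation \eqref{eqn:ga:trans} for $\bGa$.

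First I would record the two facts that pin down the value of $\Ga$ at points of $\mathcal{G}_\a$. By \eqref{eqn:bma} we have $\bma = \Ga(\Sa)$, and by the definition \eqref{eqn:set of sa} of $\mathcal{G}_\a$, every $y \in \mathcal{G}_\a$ satisfies $\Ga(y') = K + \Ga(\Sa) = K + \bma$ for all $y' \in [\sa, y]$. In particular, $\Ga(y) = K + \bma$.

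Next I would show that $\bva(x) = K + \bma$ for all $x \leq y$. From the representation \eqref{eqn:bva sS} together with the continuity of $\bva$, one has $\bva(x) = K + \Ga(\Sa) = K + \bma$ for all $x \leq \sa$, while for $x \in [\sa, y]$ the equality $\bva(x) = \Ga(x) = K + \bma$ follows from the previous step. Combining the two ranges gives $\bva(x) = K + \bma$ throughout $(-\infty, y]$. Since the demand $D$ is nonnegative, $y - D \leq y$ almost surely, so this yields $\E[\bva(y-D)] = K + \bma$.

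The identity now follows in one line. By Lemma~\ref{lm:samevalues} we have $\Ga = \bGa$, and \eqref{eqn:ga:trans} gives $\Ga(y) = \bGa(y) = \E[\h(y-D)] + \a\E[\bva(y-D)] = \E[\h(y-D)] + \a(K+\bma)$. Equating this with $\Ga(y) = K + \bma$ and rearranging produces $(1-\a)(\bma + K) = \E[\h(y-D)]$, as claimed. The argument is essentially a direct substitution, so no serious obstacle is expected; the only point requiring care is the second step—verifying that $\bva$ is genuinely constant on all of $(-\infty, y]$ rather than merely on $[\sa, y]$—since it is precisely the flatness of $\bva$ below $\sa$ combined with the nonnegativity of $D$ that guarantees $\E[\bva(y-D)] = K + \bma$.
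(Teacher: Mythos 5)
Your proof is correct and follows essentially the same route as the paper: both establish that $\bva(x)=K+\bma$ on $(-\infty,y]$ via \eqref{eqn:bva sS} and the definition of $\mathcal{G}_\a$, and then substitute into $\Ga(y)=\bGa(y)=\E[\h(y-D)]+\a\E[\bva(y-D)]$ to obtain \eqref{eqn:value and ha}. The only difference is that you spell out the flatness of $\bva$ on $[\sa,y]$ and below $\sa$ separately, which the paper leaves implicit.
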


\begin{lemma}\label{lm:r alpha <= r*}
Let Assumption~\ref{assum:qcall} hold.
	Then, for all $\a\in (\a^*,1)$ and $y \in \mathcal{G}_\a,$
	\begin{align*}
		y \leq \ra \leq \rone.
	\end{align*}
\end{lemma}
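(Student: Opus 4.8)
The plan is to prove the two inequalities $y\leq\ra$ and $\ra\leq\rone$ separately, disposing first of the second one, which does not involve $\mathcal{G}_\a$ at all. Recall from the text following \eqref{eqn:def ha} that $\E[\h(x-D)]=\E[h(x-D)]+(1-\a)\c x+\a\c\E[D]$, so that $\ra$ and $\rone$ are the smallest minimizers of $g_\a(x):=\E[h(x-D)]+(1-\a)\c x$ and of $\E[h(x-D)]$ respectively (the additive constants are irrelevant for the set of minimizers). I would argue by contradiction: if $\ra>\rone$, then since $\rone<\ra$ and $\ra$ is the \emph{smallest} minimizer of $g_\a$, the point $\rone$ is not a minimizer of $g_\a$, whence $g_\a(\rone)>g_\a(\ra)$, i.e. $\E[h(\rone-D)]-\E[h(\ra-D)]>(1-\a)\c(\ra-\rone)>0$. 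This contradicts the fact that $\rone$ minimizes $\E[h(\cdot-D)]$. Thus $\ra\leq\rone$; note this uses only that both points are smallest minimizers and that $(1-\a)\c>0$.

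For $y\leq\ra$ the quantitative input is Lemma~\ref{lm:sa}, which gives $\E[\h(y-D)]=(1-\a)(\bma+K)$ for every $y\in\mathcal{G}_\a$. I would complement it with the inequality $(1-\a)\bma\geq\inf_{x\in\X}\E[\h(x-D)]=\E[\h(\ra-D)]$, proved as follows. By \eqref{eqn:bma}, \eqref{eqn:def bua}, and Lemma~\ref{lm:samevalues} we have $\bma=\min_{x\in\X}\Ga(x)=\min_{x\in\X}\bGa(x)$, where $\bGa(x)=\E[\h(x-D)]+\a\E[\bva(x-D)]$ by \eqref{eqn:ga:trans}. Since $\E[\h(x-D)]\geq\E[\h(\ra-D)]$ and $\E[\bva(x-D)]\geq\bma$ hold pointwise (the latter because $\bma=\min_{x\in\X}\bva(x)$), each value satisfies $\bGa(x)\geq\E[\h(\ra-D)]+\a\bma$; minimizing over $x$ gives $\bma\geq\E[\h(\ra-D)]+\a\bma$, i.e. $(1-\a)\bma\geq\E[\h(\ra-D)]$. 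Combining with Lemma~\ref{lm:sa}, $\E[\h(y-D)]=(1-\a)\bma+(1-\a)K\geq\E[\h(\ra-D)]+(1-\a)K>\E[\h(\ra-D)]$, the final inequality being strict because $(1-\a)K>0$ for $\a\in(\a^*,1)$.

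It remains to upgrade ``$y$ is not a minimizer of $\E[\h(\cdot-D)]$'' to the location statement $y\leq\ra$. Suppose for contradiction that some $y\in\mathcal{G}_\a$ satisfies $y>\ra$. By \eqref{eqn:ubdStaSa} we have $\sa\leq\ra$, and by the definition \eqref{eqn:set of sa} of $\mathcal{G}_\a$ we have $y\leq\Sa$ and $\Ga\equiv K+\bma$ on $[\sa,y]$, hence on the subinterval $[\sa,\ra]\subseteq[\sa,y]$; since also $\ra\leq\Sa$, this shows $\ra\in\mathcal{G}_\a$. Applying Lemma~\ref{lm:sa} at the point $\ra$ yields $\E[\h(\ra-D)]=(1-\a)(\bma+K)$, contradicting the strict inequality $(1-\a)(\bma+K)>\E[\h(\ra-D)]$ established above. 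Therefore $y\leq\ra$ for every $y\in\mathcal{G}_\a$, which together with $\ra\leq\rone$ completes the proof.

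The main obstacle is the strict inequality $(1-\a)(\bma+K)>\E[\h(\ra-D)]$; once it is in hand the rest is bookkeeping. Its crux is the observation that $\bma$ is the minimum of a sum whose two summands are separately bounded below by $\E[\h(\ra-D)]$ and by $\a\bma$, giving $(1-\a)\bma\geq\E[\h(\ra-D)]$, after which the term $(1-\a)K$ supplied by Lemma~\ref{lm:sa} furnishes the needed strictness.
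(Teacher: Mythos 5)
Your proof is correct and follows essentially the same route as the paper's: both inequalities are handled by contradiction, $\ra\leq\rone$ via the same perturbation of the smallest minimizer by the linear term $(1-\a)\c x$ (the paper's \eqref{eqn:raincrease}), and $y\leq\ra$ via the constancy of $\Ga$ at the value $K+\bma$ on $[\sa,y]$ together with the strict gap supplied by $K>\a K.$ Your repackaging---first deriving $(1-\a)\bma\geq\E[\h(\ra-D)]$ and then noting that $y>\ra$ would force $\ra\in\mathcal{G}_\a$ and hence equality via Lemma~\ref{lm:sa}---is a reorganization of the paper's chain \eqref{eqn:contra y>ra}, which expands $\Ga(\ra)=\Ga(y)=K+\Ga(\Sa)$ using the same two lower bounds $\E[\h(\Sa-D)]\geq\E[\h(\ra-D)]$ and $\E[\bva(\Sa-D)]\geq\bma.$
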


\begin{lemma}\label{lm:limit m*}
Let Assumption~\ref{assum:qcall} hold. Then
	\begin{align}
		\lim_{\a\uparrow 1} (1-\a) \bma = \lim_{\a\uparrow1}\E[h_{\a} (s_{\a} - D)] = w .
		\label{eqn:equality for limit}
	\end{align}
\end{lemma}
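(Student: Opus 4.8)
The plan is to prove the two asserted equalities in sequence: first $\lim_{\a\uparrow1}(1-\a)\bma = w,$ obtained by a squeezing argument from the two-sided bounds \eqref{eqn:bounds on bma}, and then $\lim_{\a\uparrow1}\E[\h(\sa-D)] = w,$ which follows almost immediately from Lemma~\ref{lm:sa} once the first limit is in hand. The only ingredient beyond the displayed inequalities is the uniform boundedness of the lower thresholds $\sa$ near $\a=1,$ which is already available.

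First I would establish $\lim_{\a\uparrow1}(1-\a)\bma = w.$ Multiplying the bounds \eqref{eqn:bounds on bma}, namely $\ma+\c\sa \leq \bma \leq \ma+\c x^*_U,$ by the positive factor $(1-\a)$ preserves the inequalities and yields
\begin{align*}
	(1-\a)\ma + (1-\a)\c\sa \;\leq\; (1-\a)\bma \;\leq\; (1-\a)\ma + (1-\a)\c x^*_U .
\end{align*}
By \eqref{eqn:bar w = underline w} we have $(1-\a)\ma\to w$ as $\a\uparrow1.$ The term $(1-\a)\c x^*_U$ tends to $0$ because $x^*_U$ is a fixed constant. For the term $(1-\a)\c\sa,$ the equicontinuity analysis underlying \eqref{eqn:sab} (equivalently Theorem~\ref{thm:fle}) guarantees that there is a constant $b>0$ with $\sa\in(-b,b)$ for all $\a$ in a left neighborhood of $1,$ so $\sa$ is uniformly bounded and hence $(1-\a)\c\sa\to 0.$ Both the left and right bounds therefore converge to $w,$ and the squeeze theorem gives $\lim_{\a\uparrow1}(1-\a)\bma = w.$

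Next I would derive the second equality. Since $\sa\in\mathcal{G}_\a,$ Lemma~\ref{lm:sa} applied with $y=\sa$ gives $(1-\a)(\bma+K)=\E[\h(\sa-D)],$ that is, $\E[\h(\sa-D)]=(1-\a)\bma+(1-\a)K.$ Passing to the limit $\a\uparrow1$ and using the first equality together with $(1-\a)K\to0$ (as $K$ is a fixed constant) yields $\lim_{\a\uparrow1}\E[\h(\sa-D)]=w,$ completing the proof.

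The argument is essentially a pair of squeeze/substitution steps, so the only point requiring care — and the step I would flag as the crux — is the vanishing of $(1-\a)\c\sa,$ which relies entirely on the uniform boundedness of the optimal lower thresholds $\sa$ as $\a\uparrow1.$ This boundedness is not obvious a priori but is supplied by \eqref{eqn:sab} and Theorem~\ref{thm:fle}; once it is invoked, the rest of the proof is routine.
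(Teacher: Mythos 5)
Your proposal is correct and follows essentially the same route as the paper's proof: both squeeze $(1-\a)\bma$ between the bounds of \eqref{eqn:bounds on bma} using the uniform bound $\sa\in(-b,b)$ from \eqref{eqn:sab} together with $(1-\a)\ma\to w,$ and then obtain the second equality from Lemma~\ref{lm:sa} with $y=\sa.$ The only cosmetic difference is that you split $(1-\a)(\ma+\c\sa)$ into separately vanishing/converging terms while the paper first replaces $\sa$ by $-b$; these are equivalent.
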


\begin{proof}[Proof of Theorem~\ref{thm:limit s alpha}]
The proof is by contradiction.
	According to Theorem~\ref{thm:fle}, for $\a_n\uparrow 1,$ $n=1,2,\ldots,$
with $\a_1 > \a^*,$ every sequence $\{(s_{\a_n},S_{\a_n})\}_{n=1,2,\ldots}$ is bounded.
Consider two real numbers $s^{(1)} < s^{(2)}$ such
that there exist two sequences $\{\a_n\}_{n=1,2,\ldots}$ and $\{\tilde{\a}_n\}_{n=1,2,\ldots}$ satisfying
$\lim_{n\to +\infty} s_{\a_n} = s^{(1)}$ and $\lim_{n\to +\infty} s_{\tilde{\a}_n} = s^{(2)}.$

Since the function $\E[h(x-D)]$ is continuous,
\begin{align}
	\lim_{n\to +\infty}\E[h(s_{\a_n} - D)] = \E[h(s^{(1)} - D)].
	\label{eqn:limit:part 1}
\end{align}
Therefore,
\begin{align}
	\lim_{n\to +\infty} \E[h_{\a_n} (s_{\a_n} - D)] &= \lim_{n\to +\infty} \big\{ \E[h (s_{\a_n} - D)] + (1-\a_n)\c s_{\a_n} + \a_n \c \E[D] \big \} \nonumber \\
	& = \E[h(s^{(1)} - D)] + \c \E[D],
	\label{eqn:limit ha(sn-D)}
\end{align}
where the second equality follows from \eqref{eqn:limit:part 1} and $s_{\a_n}\to s^{(1)}\in\R$ as $\a_n\uparrow 1.$ According to Lemma~\ref{lm:limit m*}, $\E[h(s^{(1)} - D)] = w - \c\E[D].$ By the same arguments with $\a_n$ replaced with $\tilde{\a}_n,$ $\E[h(s^{(2)} - D)] = w - \c\E[D].$ Therefore,
\begin{align}
	\E[h(s^{(1)} - D)] = \E[h(s^{(2)} - D)].
	\label{eqn:h(s1)=h(s2)}
\end{align}

According to Lemma~\ref{lm:r alpha <= r*}, $\sa \leq \ra$ for all $\a\in (\a^*,1).$ Therefore
\begin{align}
	s^{(2)} = \lim_{n\to +\infty} s_{\tilde{\a}_n} \leq \liminf_{n\to +\infty} \rtan \leq \rone,
	\label{eqn:s<r*}
\end{align}
where the last inequality follows from Lemma~\ref{lm:r alpha <= r*}. Since $s^{(1)} < s^{(2)} \leq \rone,$
Assumption~\ref{assum:decrease} implies that
\begin{align}
	\E[h(s^{(1)} - D)] > \E[h(s^{(2)} - D)],
	\label{eqn:h(s1)>h(s2)}
\end{align}
which contradicts \eqref{eqn:h(s1)=h(s2)}. Thus, the limit $\lim_{\a\uparrow 1} \sa$ exists and
\eqref{eqn:s<r*} implies that $s^* \leq \rone.$
\end{proof}

The following theorem establishes the uniqueness of possible optimal lower thresholds
for the inventory  model with convex cost functions under the discounted criterion.

\begin{theorem}\label{thm:unique}
Let Assumptions~\ref{assum:qcall} and \ref{assum:decrease} hold for $\a\in(\a^*,1).$ Then
\begin{align*}
	\mathcal{G}_\a = \{ \sa \},
\end{align*}
where $\mathcal{G}_\a$ and $\sa$ are defined in \eqref{eqn:set of sa} and
\eqref{eqn:def s} with $f:=\Ga$ respectively.
\end{theorem}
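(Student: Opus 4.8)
The plan is to argue by contradiction, using that $\mathcal{G}_\a$ is by definition the interval of points $x\ge \sa$ on which $\Ga$ stays flat at the level $K+\Ga(\Sa),$ and then to play two of the preceding lemmas against Assumption~\ref{assum:decrease}. The key observation is that Lemma~\ref{lm:sa} pins the value $\E[\h(y-D)]$ to the single constant $(1-\a)(\bma+K)$ at \emph{every} $y\in\mathcal{G}_\a,$ while Lemma~\ref{lm:r alpha <= r*} places all of $\mathcal{G}_\a$ inside $(-\infty,\ra],$ precisely the region on which $\E[\h(\cdot-D)]$ is strictly decreasing. These two facts cannot coexist on a nondegenerate interval.

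First I would recall that $\sa\in\mathcal{G}_\a$ always holds, as noted immediately after the definition \eqref{eqn:set of sa}; hence the inclusion $\{\sa\}\subseteq\mathcal{G}_\a$ is automatic, and it only remains to show there is no $y\in\mathcal{G}_\a$ with $y>\sa.$ Suppose, for contradiction, that such a $y$ exists. Since both $\sa$ and $y$ lie in $\mathcal{G}_\a,$ applying Lemma~\ref{lm:sa} to each of them yields
\begin{align*}
	\E[\h(\sa-D)] = (1-\a)(\bma+K) = \E[\h(y-D)].
\end{align*}
On the other hand, Lemma~\ref{lm:r alpha <= r*} guarantees that every element of $\mathcal{G}_\a$ lies in $(-\infty,\ra];$ in particular $\sa<y\le\ra.$ Assumption~\ref{assum:decrease} states that $\E[\h(\cdot-D)]$ is strictly decreasing on $(-\infty,\ra],$ so $\sa<y\le\ra$ forces $\E[\h(\sa-D)]>\E[\h(y-D)],$ contradicting the displayed equality. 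Therefore no such $y$ exists, and $\mathcal{G}_\a=\{\sa\}.$

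I expect no substantial obstacle here: the genuine content has already been isolated in Lemma~\ref{lm:sa} (constancy of $\E[\h(y-D)]$ across $\mathcal{G}_\a$) and Lemma~\ref{lm:r alpha <= r*} (the location of $\mathcal{G}_\a$ to the left of $\ra$), and the final deduction is a one-line clash between constancy and strict monotonicity. The only point meriting a moment of care is verifying that the hypothetical $y$ indeed satisfies the hypotheses of both lemmas, which is immediate from the assumption $y\in\mathcal{G}_\a$ together with $\sa\in\mathcal{G}_\a.$
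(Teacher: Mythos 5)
Your proposal is correct and follows essentially the same route as the paper: a contradiction argument combining Lemma~\ref{lm:sa} (equality of $\E[\h(\cdot-D)]$ at $\sa$ and at the hypothetical $y$), Lemma~\ref{lm:r alpha <= r*} (placing $y\le\ra$), and the strict monotonicity from Assumption~\ref{assum:decrease}. The only cosmetic difference is the order of the final clash — the paper infers $\ra<y_1$ from the equality and then contradicts $y_1\le\ra$, while you contradict the equality directly — which is logically the same argument.
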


\begin{proof}
Recall that $\sa\in\mathcal{G}_\a$ and $y\geq \sa$ for all $y\in\mathcal{G}_\a.$
The proof is by contradiction.
Assume that there exists $y_1\in \mathcal{G}_\a$ such that $y_1 > \sa.$
According to Lemma~\ref{lm:sa},
\begin{align*}
	\E[\h (y_1-D)]  = (1-\a)(\bma + K) = \E[\h (\sa - D)].
\end{align*}
Since Assumption~\ref{assum:decrease} holds for the discount factor $\a,$ $\ra< y_1,$ where $\ra$ is defined in \eqref{eqn:def r alpha r*}. However, according to
Lemma~\ref{lm:r alpha <= r*}, $y_1\leq \ra ,$ which implies that $y_1\leq \ra <y_1.$
Therefore, $\mathcal{G}_\a = \{ \sa \}.$
\end{proof}

\section{Convergence of Discounted Relative Value Functions}
\label{sec:cvg ua}

This section establishes the convergence of discounted relative value functions to the average-cost
relative value function for the setup-cost inventory  model when the discount factor tends to $1.$
This is a stronger result than the convergence for a subsequence that follows from Theorem~\ref{thm:inventory:acoe}.
We recall that in this section it is assumed that $\X = \R$ and $\A = A(x) = \Rp$ for all $x\in\X.$
The proofs of the proposition and lemmas presented in this section are available in Appendix~\ref{sec:cvg ua:apx}.

Let us define
\begin{align}
	u (x) := \liminf_{\a\uparrow1, y\to x} u_{\a} (y).
	\label{eqn:FKZu}
\end{align}
According to Feinberg~et~el.~\cite[Theorems~3,~4]{FKZ12}, the ACOI holds for the relative
value functions $\tilde{u}$ and $u$ defined in \eqref{EQN1} and \eqref{eqn:FKZu} respectively.

The following theorem states the convergence of discounted relative value functions,
when the discount factor converges to 1, to the average-cost relative value function $u.$

\begin{theorem}\label{thm:ua converge}
Let Assumptions~\ref{assum:qcall} and \ref{assum:decrease} hold for $\a=1.$ Then,
	\begin{align}
		\lim_{\a\uparrow1} u_{\a}(x) = u (x), \qquad x\in\X,
	\label{eqn:ua converge-1}
\end{align}
and the function $u$ is continuous.
\end{theorem}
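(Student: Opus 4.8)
The plan is to upgrade the $\liminf$ defining $u$ in \eqref{eqn:FKZu} to a genuine limit by showing that every subsequential limit of $\{\ua\}_{\a\uparrow1}$ is one and the same continuous function. First I would record that, by the equicontinuity of $\{\ua\}_{\a\in[\b,1)}$ (Theorem~\ref{thm:u a equicont}), the $y\to x$ in \eqref{eqn:FKZu} is harmless: for each $\eps>0$ equicontinuity yields a neighborhood of $x$ on which $|\ua(y)-\ua(x)|<\eps$ uniformly in $\a$, so $u(x)=\liminf_{\a\uparrow1}\ua(x)$. Combining equicontinuity with the uniform, finite, $\a$-independent bound $\ua\le U$ of Lemma~\ref{lm:U(x) bounded}(i), the Arzel\`a--Ascoli theorem shows that every sequence $\a_n\uparrow1$ (with $\a_1>\a^*$) has a subsequence $\a_{n_k}$ along which $u_{\a_{n_k}}$ converges, uniformly on compact sets, to a continuous function $g.$ Since $u$ is the $\liminf,$ every such $g$ satisfies $g\ge u,$ and $u$ is the pointwise infimum of the subsequential limits; thus it suffices to prove that all subsequential limits coincide.

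Second, I would pass to the limit in the two structural identities already exploited in the proof of Theorem~\ref{thm:u a equicont}: for $x\ge\sa$ one has $\ua(x)=\E[h(x-D)]-(1-\a)\ma+\a\E[\ua(x-D)],$ and for $x\le\sa$ one has $\ua(x)=\c(\sa-x)+\ua(\sa).$ Using $\sa\to s^*$ (Theorem~\ref{thm:limit s alpha}), $(1-\a)\ma\to w,$ the dominated convergence theorem justified by $\ua(x-D)\le U(x-D)$ with $\E[U(x-D)]<+\infty$ (Lemma~\ref{lm:U(x) bounded}(iii)), and the uniform-on-compacts convergence, I obtain for each subsequential limit $g$ the system
\[
g(x)=\E[h(x-D)]-w+\E[g(x-D)]\ \ (x\ge s^*),\qquad g(x)=\c(s^*-x)+g(s^*)\ \ (x\le s^*).
\]

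Third --- and this is the crux --- I would show this system determines $g$ uniquely, independently of the subsequence. Any two solutions $g_1,g_2$ have difference $\phi:=g_1-g_2$ that equals the constant $\phi(s^*)$ on $(-\infty,s^*]$ and satisfies the harmonic relation $\phi(x)=\E[\phi(x-D)]$ on $[s^*,+\infty).$ Because $\E[D]\in(0,+\infty)$ and $\PR(D>0)>0,$ the non-increasing random walk $x-\textbf{S}_n$ drops below any level $s^*$ in finitely many steps almost surely; since $\phi$ is bounded on each interval $[s^*,x]$ and equals the constant $\phi(s^*)$ once the walk crosses $s^*,$ a bounded-martingale (optional-stopping) argument gives $\phi\equiv\phi(s^*),$ so $g$ is determined up to a single additive constant. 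To fix the constant I would use the normalization $\inf_{x}\ua(x)=0$ together with the fact that the minimizers of $\va$ lie in the fixed compact interval $[x^*_L,x^*_U]$ (see \eqref{eq:boundmaxlul}): extracting a convergent sequence of minimizers and invoking uniform convergence on that interval yields $\inf_x g(x)=0$ with the infimum attained in $[x^*_L,x^*_U],$ which removes the additive freedom. Hence all subsequential limits equal the unique normalized solution.

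Finally, since every subsequence of $\{\ua\}$ has a sub-subsequence converging to the same $g,$ the full limit $\lim_{\a\uparrow1}\ua(x)=g(x)$ exists for each $x;$ as $u=\liminf_{\a\uparrow1}\ua$ is the infimum of the now-unique subsequential limits, $u=g,$ which is continuous. The main obstacle is the uniqueness step: rigorously pinning down the additive constant and justifying that constants are the only relevant solutions of the homogeneous renewal equation on $[s^*,+\infty).$ Here Assumption~\ref{assum:decrease} is essential, since it is what guarantees (via Theorem~\ref{thm:limit s alpha}) that the limiting threshold $s^*$ --- and hence the domain split in the functional system above --- is the same for every sequence $\a_n\uparrow1.$
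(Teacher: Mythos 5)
Your proposal is correct, but the mechanism you use for the central step is genuinely different from the paper's. The paper's heavy lifting is done in Lemma~\ref{lm:bua pointwise converge}: it writes $\bua(x)$ for $x>s^*$ as the explicit renewal series $\E[\sum_{j=1}^{\textbf{N}(x-\sa)+1}\a^{j-1}(\tilde h_\a(x-\textbf{S}_{j-1})-(1-\a)\bma)]+\E[\a^{\textbf{N}(x-\sa)+1}K]$, passes to the limit term by term via dominated convergence (with a Chebyshev tail bound), handles the at most countable set where the distribution functions of $\textbf{S}_n$ have atoms by an Arzel\`a--Ascoli patching argument, and only then transfers the convergence from $\bua$ to $\ua$ using the minimizer localization \eqref{eq:boundmaxlul} and the identity $\ua=\bua+\bma-\ma-\c x.$ You instead keep all subsequential limits abstract, show each one solves the limiting system $g(x)=\E[h(x-D)]-w+\E[g(x-D)]$ on $[s^*,+\infty)$ and $g(x)=\c(s^*-x)+g(s^*)$ on $(-\infty,s^*],$ and prove uniqueness up to an additive constant by an optional-stopping argument for the bounded martingale $\phi(x-\textbf{S}_{n\wedge\tau}),$ fixing the constant by the same minimizer-localization normalization $\inf_x g=0.$ Both routes rest on the same inputs (Theorems~\ref{thm:u a equicont} and \ref{thm:limit s alpha}, Lemma~\ref{lm:U(x) bounded}, $(1-\a)\ma\to w$), and your identification $u=\liminf_{\a\uparrow1}\ua$ at the end matches the paper's step (ii). What your approach buys is that it avoids the explicit series manipulation and the bookkeeping of discontinuity points of $\PR(\textbf{S}_n\le\cdot)$; what it costs is that the uniqueness/normalization step must be done carefully: in particular, since $\sa\to s^*$ is not monotone you only get the renewal identity for $x>s^*$ strictly, so the harmonic relation for $\phi$ should be used on $(s^*,+\infty)$ with the stopping time $\tau=\min\{n: x-\textbf{S}_n\le s^*\},$ which still yields $\phi\equiv\phi(s^*)$ because $\phi$ is constant on $(-\infty,s^*].$ With that minor care, your argument is complete.
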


In particular, Theorem~\ref{thm:ua converge} implies that the function $\tilde{u}$ defined in \eqref{EQN1}
is the same for every particular sequence $\{\a_n\uparrow1\}_{n=1,2,\ldots}.$
The following example demonstrates that this is not true in general under Assumptions~\textbf{W*} and~\textbf{B}.

\begin{example}\label{ex:mdplimit}
{\rm
Consider a MDP with state space $\X = \{-2, -1, 0, 1, 2, \ldots \}$ and action space
$\A = \{a^s , a^c\},$ where the action $a^s$ stands for ``stop'' and the action $c$ stands for ``continue'';
see Figure~\ref{fig:mdplimit}.
Let $A(-1) = \A$ and $A(n) = \{ a^c \}$ for $n \in \X\setminus \{ -1 \}.$ The transition probabilities are
$P(-1 | -1, a^s) = 1$ and $P(n + 1 | n, a^c) = 1$ for $n\in \X.$
The costs are $c(-2, a^c) = 0,$ $c(-1, a) = 1$ for $a\in\A,$
and $c(n, a^c) = \zone_n $ for $n = 0,1,\ldots,$ where $\zone_n$ is defined as
\begin{align}
	\zone_n =
	\begin{cases}
		z_0 + 1, & \text{if } n = 0, \\
		z_n - z_{n-1} + 1, & \text{if } n = 1,2,\ldots,
	\end{cases}
	\label{eqn:ex1:defzn1}
\end{align}
where the sequence $z_n$ is taken from Bishop et al.~\cite[Equation (11)]{BFZ14}:
\begin{align*}
	z_n =
	\begin{cases}
		1, & \text{if } D(2k - 1) \leq n < D(2k), \quad k = 1,2,\ldots, \\
		0, & \text{otherwise,}
	\end{cases}
\end{align*}
where $D(k) := \sum_{i=1}^k i! ,$ $k = 1,2,\ldots\ .$ For the sequence $\{ z_n \}_{n=0,1,\ldots},$ define the function
\begin{align}
	f(\a) := (1 - \a) \sum_{i=0}^{\infty} z_i \a^i , \qquad \a \in [0,1) .
	\label{eqn:ex1:deffalpha}
\end{align}
\begin{figure}[ht]
  \centering
  \caption{MDP described in Example~\ref{ex:mdplimit}}
  \label{fig:mdplimit}
  \includegraphics[scale=0.6]{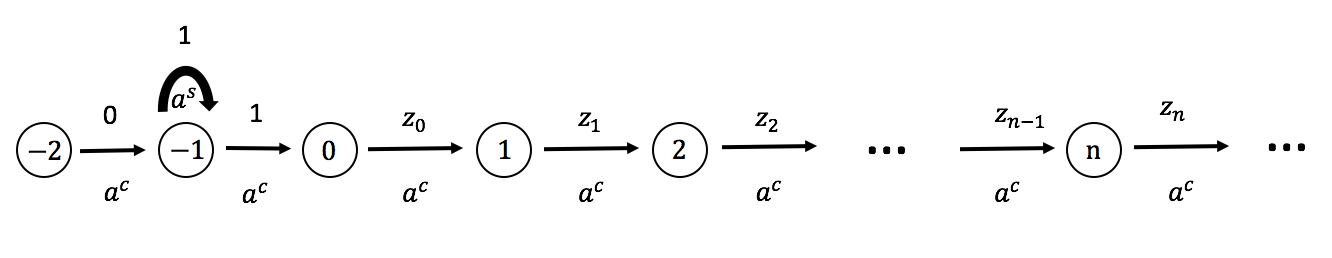}
\end{figure}

As shown in the proof of Proposition~\ref{prop:ex1:assumption} in Appendix~\ref{sec:cvg ua:apx}, the relative value
function
\begin{align}
	\ua (n) =
	\begin{cases}
		0, & \text{if } n = -2, \\
		1, & \text{if } n = -1, \\
		f(\a) + 1, & \text{if } n = 0, \\
		(1-\a)\sum_{i=0}^{\infty} z_{n+i} \a^i - z_{n-1} + 1, & \text{if } n = 1,2,\ldots\ .
	\end{cases}
	\label{eqn:ex1:ua}
\end{align}
According to Bishop~et~al.~\cite[Proposition 1]{BFZ14}, $\liminf_{\a\uparrow1}f(\a)=0$ and
$\limsup_{\a\uparrow1} f(\a) = 1.$ Hence,
$\liminf_{\a\uparrow1} \ua (0) = 1$ and $\limsup_{\a\uparrow1} \ua (0) = 2,$ that is, in this example
there exist multiple relative value functions $\tu$ defined in \eqref{EQN1}.
}
\end{example}

\begin{proposition}\label{prop:ex1:assumption}
	The MDP described in Example~\ref{ex:mdplimit} satisfies Assumptions~\textbf{W*} and \textbf{B}, where the discrete
	metric $d(x,y) = \indF_{\{x = y\}}$ is considered on $\X$ and $\A.$
\end{proposition}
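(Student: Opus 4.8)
The plan is to use the discrete metric to make Assumption~\textbf{W*} automatic, then to solve the discounted optimality equations in closed form to obtain the formula \eqref{eqn:ex1:ua} for $\ua$, and finally to read Assumption~\textbf{B} directly off that formula. First I would dispose of Assumption~\textbf{W*}. In the discrete metric a subset of $\X$ is compact if and only if it is finite, and $\A$ has only two points, so for every compact $K\subset\X$ the set $K\times\A$ is finite; every level set of $c$ restricted to $K\times\A$ is then finite and hence compact, so $c$ is $\K$-inf-compact. The one-step costs take only the values $0,$ $1,$ and $\zone_n,$ and since $z_n\in\{0,1\}$ the definition \eqref{eqn:ex1:defzn1} gives $\zone_n\in\{0,1,2\};$ thus $0\le c\le 2$ and in particular $c$ is bounded below. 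Finally, in the discrete metric every function on $\X$ is continuous, and so is every function on $\X\times\A;$ hence $(x,a)\mapsto\int_\X f(y)\,q(dy|x,a)$ is continuous for each bounded $f,$ which is the weak continuity of $q.$ Therefore Assumption~\textbf{W*} holds.

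Next I would compute $\va.$ For $n\ge 0$ the action $a^c$ is forced and the trajectory is deterministic, so $\va(n)=\sum_{k=0}^{\infty}\a^{k}\zone_{n+k},$ which converges because $\zone_{n+k}\le 2.$ Inserting \eqref{eqn:ex1:defzn1} and telescoping the increments $z_{n+k}-z_{n+k-1}$ produces $\va(0)=f(\a)+\frac{1}{1-\a}$ and $\va(n)=(1-\a)\sum_{i=0}^{\infty}z_{n+i}\a^{i}-z_{n-1}+\frac{1}{1-\a}$ for $n\ge 1,$ with $f$ as in \eqref{eqn:ex1:deffalpha}. At state $-1$ the optimality equation compares stopping, with value $\sum_{t\ge 0}\a^{t}=\frac{1}{1-\a},$ against continuing, with value $1+\a\va(0)=\frac{1}{1-\a}+\a f(\a);$ since $f(\a)\ge 0,$ stopping is optimal, so $\va(-1)=\frac{1}{1-\a}$ and then $\va(-2)=\a\va(-1)=\frac{\a}{1-\a}.$

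To pass to $\ua$ I would identify $\ma=\inf_{x}\va(x).$ Because $f(\a)\ge 0,$ $(1-\a)\sum_{i}z_{n+i}\a^{i}\ge 0,$ and $z_{n-1}\le 1,$ each value above is at least $\frac{\a}{1-\a}=\va(-2),$ and for $x\ne -2$ the inequality is strict since $\{z_n\}$ contains infinitely many ones and hence $\sum_i z_{n+i}\a^i>0$ for $\a\in(0,1);$ thus $\ma=\va(-2)=\frac{\a}{1-\a}$ and $\ua=\va-\ma$ gives exactly \eqref{eqn:ex1:ua}.

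Assumption~\textbf{B} then follows immediately. For part~(i), the bound $c\le 2$ yields $w^{{\rm ac}}(x)\le 2$ for every $x,$ so $w^{*}<+\infty;$ for part~(ii), the estimates $(1-\a)\sum_{i\ge 0}z_{n+i}\a^{i}\le(1-\a)\sum_{i\ge 0}\a^{i}=1$ and $f(\a)\le 1$ give $\ua(x)\le 2$ uniformly in $\a,$ so $\sup_{\a\in[0,1)}\ua(x)<+\infty.$ The only step needing care is verifying that the infimum defining $\ma$ is attained precisely at $-2,$ which is exactly what forces $\ua$ to equal the claimed expression; the remaining work is the routine telescoping algebra used to evaluate $\va.$
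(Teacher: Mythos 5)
Your proof is correct and follows essentially the same route as the paper: Assumption \textbf{W*} is immediate from the discrete metric, $\va$ is computed in closed form by telescoping, the infimum $\ma$ is identified at the state $-2,$ and the uniform bound $0\le\ua\le 2$ yields Assumption \textbf{B}. The only cosmetic differences are that you argue optimality of stopping at $-1$ via the optimality equation rather than by enumerating the policies $\varphi^{(k)},$ and you bound $\sup_\a\ua$ directly instead of passing through the $\limsup$ reduction; both variants are valid.
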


Before the proof of Theorem~\ref{thm:ua converge}, we first state several properties of the functions $\bua$
defined in \eqref{eqn:def bua}. If there exists an $\a$-discounted optimal
$(\sa,\Sa)$ optimal policy, then \eqref{eqn:bva sS} implies that
\begin{align}
	\bua (x) =
	\begin{cases}
		\Ga (x) - \bma & \text{if } x \geq \sa, \\
		K & \text{if } x < \sa.
	\end{cases}
	\label{eqn:bua-2}
\end{align}

\begin{lemma}\label{lm:bua equicontinuous}
Let Assumption~\ref{assum:qcall} hold. Then,
	\begin{enumerate}[(i)]
		\item for each $\b\in (\a^*,1)$ the family of functions $\{\bua\}_{\a\in [\b, 1)}$ is equicontinuous on $\X;$ \label{lm:bua-equi}
		\item $\sup_{\a\in (\a^*,1)} \bua (x) < +\infty$ for all $x\in\X.$ \label{lm:bua-bd}
	\end{enumerate}
\end{lemma}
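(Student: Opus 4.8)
The plan is to reduce both statements to the already-established properties of the discounted relative value functions $\ua$ by means of a single identity. First I would combine the definition $\bua(x)=\bva(x)-\bma$ with Lemma~\ref{lm:samevalues}, which gives $\bva(x)=\va(x)+\c x,$ and with $\va(x)=\ua(x)+\ma,$ to obtain
\begin{align}
	\bua(x) = \ua(x) + \c x - (\bma - \ma), \qquad x\in\X,\ \a\in(\a^*,1).
	\label{eqn:plan-bua-identity}
\end{align}
The key observation is that $\bma-\ma$ is a constant independent of $x,$ while $\c x$ is a fixed Lipschitz function independent of $\a;$ this is what makes \eqref{eqn:plan-bua-identity} the right vehicle for both parts.

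For part~\eqref{lm:bua-equi}, I would fix $\b\in(\a^*,1).$ Since the constant $\bma-\ma$ cancels in differences, \eqref{eqn:plan-bua-identity} yields $|\bua(x)-\bua(y)|\le|\ua(x)-\ua(y)|+\c|x-y|$ for all $\a\in[\b,1).$ Given $x\in\X$ and $\eps>0,$ Theorem~\ref{thm:u a equicont} supplies an open set containing $x$ on which $|\ua(x)-\ua(y)|<\eps/2$ simultaneously for all $\a\in[\b,1);$ intersecting it with $\{y:\c|x-y|<\eps/2\}$ produces an open neighborhood of $x$ on which $|\bua(x)-\bua(y)|<\eps$ for all $\a\in[\b,1).$ This is precisely equicontinuity of $\{\bua\}_{\a\in[\b,1)}$ at the arbitrary point $x.$

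For part~\eqref{lm:bua-bd}, I would bound the $\a$-dependent constant in \eqref{eqn:plan-bua-identity} from below. The inequality $\bma\ge\ma+\c\sa$ from \eqref{eqn:bounds on bma} gives $\bma-\ma\ge\c\sa,$ so $-(\bma-\ma)\le-\c\sa.$ The uniform lower bound $\sa>-b$ on the optimal lower thresholds, where $b>0$ is the constant produced in the proof of Theorem~\ref{thm:u a equicont} and valid for all $\a\in(\a^*,1),$ then yields $-(\bma-\ma)\le\c b.$ Substituting into \eqref{eqn:plan-bua-identity} and invoking $\ua(x)\le U(x)<+\infty$ from Lemma~\ref{lm:U(x) bounded} gives
\begin{align*}
	\bua(x)\le\ua(x)+\c x+\c b\le U(x)+\c x+\c b<+\infty
\end{align*}
for all $x\in\X,$ uniformly in $\a\in(\a^*,1),$ which is the assertion.

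The proof is thus essentially routine once \eqref{eqn:plan-bua-identity} is in hand; the one delicate point, and the step I expect to need the most care, is uniformity over the full range $\a\in(\a^*,1)$ (respectively $[\b,1)$) rather than along a subsequence. Both ingredients I rely on---the equicontinuity of $\{\ua\}_{\a\in[\b,1)}$ and the threshold bound $\sa>-b$---are stated uniformly in Theorem~\ref{thm:u a equicont}, so I would take care to cite these uniform versions rather than the merely subsequential boundedness of thresholds furnished by Theorem~\ref{thm:fle}.
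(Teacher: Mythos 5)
Your proposal is correct and follows essentially the same route as the paper: the same identity $\bua(x)=\ua(x)+\c x-(\bma-\ma)$ reduces part (i) to the equicontinuity of $\{\ua\}_{\a\in[\b,1)}$ from Theorem~\ref{thm:u a equicont} plus the Lipschitz term $\c|x-y|,$ and part (ii) to the bound $\bma-\ma\ge\c\sa\ge-\c b$ from \eqref{eqn:bounds on bma} together with the uniform finiteness of $\ua(x)$ (the paper invokes Assumption~\textbf{B} via Theorem~\ref{thm:B}, you invoke the equivalent bound $\ua(x)\le U(x)$ of Lemma~\ref{lm:U(x) bounded}). The differences are purely cosmetic.
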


\begin{lemma}\label{lm:bua pointwise converge}
	Let Assumptions~\ref{assum:qcall} and \ref{assum:decrease} hold for $\a=1.$
	Then there exists the limit
	\begin{align}
	\bu (x) := \lim_{\a\uparrow1} \bu_{\a}(x) , \qquad x\in\X,
	\label{eqn:convergence-1}
\end{align}
where the function $\bu$ is continuous on $\X.$
\end{lemma}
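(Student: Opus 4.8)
The plan is to combine the equicontinuity and boundedness of the family $\{\bua\}$ furnished by Lemma~\ref{lm:bua equicontinuous} with a renewal-type uniqueness argument to upgrade subsequential convergence to convergence of the whole family. First I would record the uniform bound $0\le \bua(x)\le U(x)+\c x+\c b$ valid for $\a$ close to $1$: nonnegativity holds because $\bma=\min_{x}\bva(x),$ while the upper bound follows from the identity $\bua(x)=\ua(x)+\c x+(\ma-\bma),$ the estimate $\ua\le U$ of Lemma~\ref{lm:U(x) bounded}(i), the bounds on $\bma$ in \eqref{eqn:bounds on bma}, and the boundedness $\sa\in(-b,b)$ from \eqref{eqn:sab}. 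Together with the equicontinuity in Lemma~\ref{lm:bua equicontinuous}(i), this makes $\{\bua\}_{\a\in[\b,1)}$ pointwise bounded and equicontinuous, so every sequence $\a_n\uparrow1$ admits a subsequence $\{\a_{n_k}\}$ along which $\bar{u}_{\a_{n_k}}$ converges, uniformly on each compact subset of $\X,$ to a continuous limit $\bu.$ It remains to prove that this limit is independent of the subsequence.

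Second, I would identify the limit on the two regions cut out by $s^*.$ By Theorem~\ref{thm:limit s alpha} one has $\sa\to s^*$ as $\a\uparrow1.$ Hence for $x<s^*$ we have $x<\sa$ for all $\a$ near $1,$ so $\bua(x)=K$ by \eqref{eqn:bua-2}, giving $\bu(x)=K$ on $(-\infty,s^*]$ by continuity. For $x>s^*$ we have $x\ge\sa$ for all $\a$ near $1,$ so $\bua(x)=\bGa(x)-\bma=\E[\h(x-D)]-(1-\a)\bma+\a\E[\bua(x-D)].$ Passing to the limit along $\{\a_{n_k}\},$ using $\E[\h(x-D)]\to\E[h(x-D)]+\c\E[D],$ the relation $(1-\a)\bma\to w$ from Lemma~\ref{lm:limit m*}, and dominated convergence for the term $\a\E[\bua(x-D)]\to\E[\bu(x-D)]$ (with integrable dominating function $U(x-D)+\c(x-D)+\c b,$ finite in expectation by Lemma~\ref{lm:U(x) bounded}(iii) and $\E[D]<+\infty$), I obtain for all $x>s^*$ the renewal equation
\begin{align*}
	\bu(x)+w=\E[h(x-D)]+\c\E[D]+\E[\bu(x-D)].
\end{align*}

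Third, I would show that this equation together with the boundary condition $\bu\equiv K$ on $(-\infty,s^*]$ determines $\bu$ uniquely, which forces all subsequential limits to agree. If $\bu_1,\bu_2$ are two continuous solutions, their difference $\Delta:=\bu_1-\bu_2$ vanishes on $(-\infty,s^*]$ and satisfies $\Delta(x)=\E[\Delta(x-D)\indF_{\{x-D>s^*\}}]$ for $x>s^*.$ Iterating $n$ times yields $\Delta(x)=\E\big[\Delta(x-\textbf{S}_n)\indF_{\{x-\textbf{S}_j>s^*,\ j=1,\ldots,n\}}\big];$ since $\Delta$ is bounded on the compact interval $[s^*,x]$ and $\PR(x-\textbf{S}_j>s^*,\ j\le n)=\PR(\textbf{N}(x-s^*)\ge n)\to0$ as $n\to+\infty$ (because $\E[\textbf{N}(x-s^*)]<+\infty$), I conclude $\Delta(x)=0.$ Thus the full limit $\bu(x)=\lim_{\a\uparrow1}\bua(x)$ exists for every $x\in\X,$ and $\bu$ is continuous as a locally uniform limit of the continuous functions $\bua.$

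I expect the main obstacle to be the uniqueness step: the entire passage from subsequential to full convergence rests on the limiting renewal equation having a single solution with the prescribed boundary behavior, so the delicate points are to verify carefully that the limit equation is pinned down by the fixed data $s^*,$ $w,$ $h,$ and the demand distribution (which requires the interchange of limit and expectation and the convergence of $(1-\a)\bma$ to hold exactly along the subsequence), and then to run the renewal unrolling so that the surviving probability genuinely vanishes.
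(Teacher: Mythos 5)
Your proposal is correct, and it reaches the conclusion by a genuinely different route in the key step. You and the paper agree on the setup: both use the equicontinuity and pointwise boundedness of $\{\bua\}$ (Lemma~\ref{lm:bua equicontinuous}) plus Arzel\`a--Ascoli to extract subsequential continuous limits, and both identify $\bu\equiv K$ on $(-\infty,s^*]$ via $\sa\to s^*$ and \eqref{eqn:bua-2}. The divergence is in how the limit is pinned down for $x>s^*$. The paper unrolls \eqref{eqn:bua x>s*-1} all the way into the explicit renewal series \eqref{eqn:bua x>s*-2}, passes to the limit term by term, and has to restrict to the co-countable set $\mathcal{C}$ of points where the distribution functions of the $\mathbf{S}_n$ are continuous at $x-s^*$ (since $\PR(\mathbf{S}_n\le x-\sa)$ need not converge to $\PR(\mathbf{S}_n\le x-s^*)$ at atoms); it then recovers the exceptional countable set $\bar{\mathcal{D}}$ by a density-plus-continuity argument with two Arzel\`a--Ascoli extractions. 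You instead pass to the limit only in the one-step fixed-point equation, obtaining the limiting renewal equation $\bu(x)+w=\E[h(x-D)]+\c\E[D]+\E[\bu(x-D)]$ for $x>s^*$, and prove that a continuous solution with the boundary value $K$ on $(-\infty,s^*]$ is unique by iterating $\Delta(x)=\E[\Delta(x-\mathbf{S}_n)\indF_{\{x-\mathbf{S}_j>s^*,\,j\le n\}}]$ and letting $\PR(\mathbf{N}(x-s^*)\ge n)\to0.$ Your route avoids the atom/exceptional-set bookkeeping entirely and cleanly isolates the two facts that make the limit subsequence-independent, namely that $w$ and $s^*$ are full limits (Lemma~\ref{lm:limit m*} and Theorem~\ref{thm:limit s alpha}); what it gives up is the paper's closed-form expression \eqref{eqn:convegC} for $\bu$ as a renewal-reward sum, which the paper's method produces as a byproduct. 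All the technical points you flag (dominated convergence for $\E[\bua(x-D)]$ via Lemma~\ref{lm:U(x) bounded}, boundedness of $\Delta$ on $[s^*,x]$, and $\E[\mathbf{N}(x-s^*)]<+\infty$) check out.
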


In view of \eqref{defmauaw}, \eqref{eqn:modeltrans} and \eqref{eqn:def bua},
\begin{align}
	\ua (x) = \bua (x) + \bma - \ma - \c x, \qquad x\in\X.
	\label{eqn:ubu}
\end{align}

\begin{proof}[Proof of Theorem~\ref{thm:ua converge}]
The theorem follows from the following two statements:
\begin{enumerate}[(i)]
	\item \label{enum:thmua:1} there exists the limit $u^* (x):=\lim_{\a\uparrow1} u_{\a} (x),$ $x\in\X,$
and the function $u^*$ is continuous on $\X;$ and
	\item \label{enum:thmua:2} $u^* (x) = u (x) := \liminf_{\a\uparrow1, y\to x}u_{\a} (x)$ for all $x\in\X.$
\end{enumerate}
Let us prove statements \eqref{enum:thmua:1} and \eqref{enum:thmua:2}.
\eqref{enum:thmua:1} We first prove that there exists the limit $u^* (s^*) := \lim_{\a\uparrow1} u_{\a}(s^*),$ where $s^*$ is defined in \eqref{eqn:def s*}.

Consider $x_\a \in X_{\a},$ $\a\in[0,1),$ where $X_{\a}$ is defined in \eqref{eqn:Xa}, and
any given $\b\in (\a^*,1).$
In view of \eqref{eq:boundmaxlul}, since $X_{\a}\subset [x_L^*,x_U^*]$ for all $\a\in[0,1),$
for every sequence $\{ \a_n \uparrow 1\}_{n=1,2,\ldots},$ there exists a subsequence
$\{ \a_{n_k} \uparrow 1\}_{k=1,2,\ldots}$ of the sequence $\{ \a_n \uparrow 1\}_{n=1,2,\ldots}$ such that
$\a_{n_1}\geq \b$ and $x_{\a_{n_k}}\to x^*$ as $k\to +\infty$ for some $x^*\in[x_L^*,x_U^*].$

Consider $\eps > 0.$ Since the family of functions $\{ \ua \}_{\a\in [\b,1)}$ is equicontinuous
(see Theorem~\ref{thm:u a equicont}), there exists an integer $M(\eps) > 0$ such that for all $k \geq M(\eps)$
\begin{align}
	|u_{\a_{n_k}} (x_{\a_{n_k}}) - u_{\a_{n_k}} (x^*)| < \eps.
	\label{eqn:limit u(x*)}
\end{align}
Since $u_{\a_{n_k}} (x_{\a_{n_k}}) = 0$ for all $k=1,2,\ldots,$ \eqref{eqn:limit u(x*)} implies that
for $k \geq M(\eps)$
\begin{align}
	|u_{\a_{n_k}} (x^*) | < \eps.
	\label{eqn:uan(x*)<e}
\end{align}
Therefore, \eqref{eqn:uan(x*)<e} implies that
\begin{align}
	\lim_{k\to +\infty} u_{\a_{n_k}} (x^*) = 0 .
	\label{eqn:limit uan(x*)=0}
\end{align}
Since the function $u_{\a_{n_k}}$ is nonnegative, \eqref{eqn:uan(x*)<e} implies that for $k \geq M(\eps)$
\begin{align}
	u_{\a_{n_k}} (x^*) < u_{\a_{n_k}} (x) + \eps, \qquad x\in\X .
	\label{eqn:min value-1}
\end{align}
Then \eqref{eqn:min value-1} and \eqref{eqn:ubu} imply that for $k \geq M(\eps)$
\begin{align}
	\bar{u}_{\a_{n_k}} (x^*) - \bar{c}x^* < \bar{u}_{\a_{n_k}} (x) - \bar{c}x + \eps,
	\qquad x\in\X.
	\label{eqn:min value-2}
\end{align}
By taking the limit of both sides of \eqref{eqn:min value-2} as $k\to +\infty,$
Lemma~\ref{lm:bua pointwise converge} implies that
\begin{align}
	\bu (x^*) - \bar{c} x^* \leq \bu (x) - \bar{c}x + \eps, \qquad x\in\X .
	\label{eqn:min value-3}
\end{align}
Since $\eps$ can be chosen arbitrarily, \eqref{eqn:min value-3} implies that
\begin{align}
	\bu (x^*) - \bar{c} x^* = \min_{x\in\X} \{ \bu (x) - \bar{c}x \} .
	\label{eqn:minV-4}
\end{align}
Let $M_{\bu}:= \bu (s^*) - \bar{c}s^* - \min_{x\in\X} \{ \bu (x) - \bar{c}x \}.$
Then
\begin{align}
\begin{split}
	&\lim_{k\to +\infty} u_{\a_{n_k}} (s^*) - u_{\a_{n_k}} (x^*)  = \lim_{k\to +\infty} \bar{u}_{\a_{n_k}} (s^*) - \bar{c}s^* - [\bar{u}_{\a_{n_k}} (x^*) -  \bar{c}x^*]  \\
	=& \bu (s^*) - \bar{c}s^* -[\bu (x^*) - \bar{c}x^*]  = \bu (s^*) - \bar{c}s^* - \min_{x\in\X} \{ \bu (x) - \bar{c}x \} = M_{\bu},
\end{split}
	\label{eqn:LimitOfDiff}
\end{align}
where the first equality follows from \eqref{eqn:ubu}, the second one follows from
Lemma~\ref{lm:bua pointwise converge} and the third one follows from \eqref{eqn:minV-4}.
In view of \eqref{eqn:limit uan(x*)=0} and \eqref{eqn:LimitOfDiff},
\begin{align}
	\lim_{k\to +\infty} u_{\a_{n_k}} (s^*) = M_{\bu}.
	\label{eqn:limit u(s*)}
\end{align}
Thus, for every sequence $\{ \a_n \uparrow1 \}_{n=1,2,\ldots	}$ there exists a subsequence
$\{ \a_{n_k} \}_{k=1,2,\ldots}$ such that \eqref{eqn:limit u(s*)} holds. Therefore,
$\lim_{n\to +\infty} u_{\a_n} (s^*) = M_{\bu}$ for every sequence $\{ \a_n \uparrow1 \}_{n=1,2,\ldots	},$
which is equivalent to
\begin{align}
	u^* (s^*) := \lim_{\a\uparrow1} u_{\a} (s^*) = M_{\bu} .
	\label{eqn:limit u(s*) all}
\end{align}

Now we prove that there exists the limit $u^* (x):=\lim_{\a\uparrow1} u(x)$  for $x \in\X.$
For $x\in\X$
\begin{align}
\begin{split}
	\lim_{\a\uparrow1} \ua (x) - \ua (s^*) & =
		\lim_{\a\uparrow1} \bua (x) - \c x - [\bua (s^*) - \c s^*] \\
	& = \bu (x) - \c x - [\bu (s^*) - \c s^*],
\end{split}
	\label{eqn:limit u(y)}
\end{align}
where the first equality follows from \eqref{eqn:ubu} and the second one follows from Lemma~\ref{lm:bua pointwise converge}.
Therefore, \eqref{eqn:limit u(s*) all} and \eqref{eqn:limit u(y)} implies that
there exists the limit
\begin{align}
	u^* (x) := \lim_{\a\uparrow1} u_{\a} (x) = M_{\bu} + \bu (x) - \bar{c} x - [\bu (s^*) - \bar{c} s^*], \qquad x\in\X.
	\label{eqn:limit u(y) all}
\end{align}

Furthermore, since the family of functions $\{ \ua \}_{ \a \in [\b,1)  }$ is equicontinuous
and Assumption \textbf{B} holds, Arzel\`{a}-Ascoli theorem implies that
the function $u^*$ is continuous.

(ii) Consider sequences $\{ \a_n\uparrow1 \}_{n=1,2,\ldots}$ and $\{ y_n\to x \}_{n=1,2,\ldots}$ such that
$\a_1 > \a^* $ and
$\lim_{n\to  +\infty} u_{\a_n}(y_n) = \liminf_{\a\uparrow1, y\to x} u_{\a}(y).$ Then,
\begin{align*}
	\liminf_{\a\uparrow1, y\to x} u_{\a}(y) \leq
	\liminf_{n\to +\infty, y\to x} u_{\a_n}(y) \leq \lim_{n\to  +\infty} u_{\a_n}(y_n) = \liminf_{\a\uparrow1, y\to x} u_{\a}(x),
\end{align*}
which implies that
\begin{align}
	\liminf_{\a\uparrow1, y\to x} u_{\a}(y) = \liminf_{n\to +\infty, y\to x} u_{\a_n}(y).
	\label{eqn:lim=1}
\end{align}
According to Feinberg and Liang~\cite[Lemma~3.3]{FLi16b}, since $\lim_{n\to +\infty} u_{\a_n} (x) = u^* (x),$
Theorems~\ref{thm:B} and~\ref{thm:u a equicont} imply that
\begin{align}
	\liminf_{n\to +\infty, y\to x} u_{\a_n}(y) = \lim_{n\to +\infty} u_{\a_n} (x) = u^* (x) .
	\label{eqn:lim=2}
\end{align}
Therefore, \eqref{eqn:lim=1} and \eqref{eqn:lim=2} imply that $u:=\liminf_{\a\uparrow1, y\to x} u_{\a}(y) = u^*.$ This completes the proof.
\end{proof}

Theorem~\ref{thm:ua converge} implies that \eqref{eqn:IC ACOE-2} can be written as
\begin{align}
	H(x) := \bar{c}x + \E[h (x-D)] + \E[u (x-D)].
	\label{eqn:def H(x)}
\end{align}

\begin{corollary}\label{cor:inventory:acoe}
Let Assumptions~\ref{assum:qcall} and \ref{assum:decrease} hold for $\a=1.$
Then the conclusions of Theorem~\ref{thm:inventory:acoe} hold with $\tilde{u}=u$ defined in \eqref{eqn:ua converge-1} and $s^*$ defined in \eqref{eqn:def s*}, that is, the functions $\tilde{u}$ and the thresholds $s^*$ defined in \eqref{EQN1} and Theorem~\ref{thm:fle} respectively,
are the same for all sequences $\{ \a_n\uparrow1 \}_{n=1,2,\ldots}.$
\end{corollary}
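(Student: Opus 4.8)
The plan is to assemble the full-limit convergence results proved earlier into the single assertion that both $\tilde u$ and $s^*$ are independent of the approximating sequence. First I would fix an arbitrary sequence $\{\a_n\uparrow 1\}_{n=1,2,\ldots}$ with $\a_1 > \a^*$ and show that the function $\tilde u$ defined in \eqref{EQN1} for this sequence coincides with the function $u$ of \eqref{eqn:FKZu}. By Theorem~\ref{thm:ua converge}, under Assumptions~\ref{assum:qcall} and \ref{assum:decrease} for $\a=1$ the full limit $\lim_{\a\uparrow1} u_\a(x) = u(x)$ exists at every $x\in\X$; in particular $\lim_{n\to+\infty} u_{\a_n}(x) = u(x)$. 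Combining this with the equicontinuity of the family $\{u_{\a_n}\}$ (Theorem~\ref{thm:u a equicont}), the same argument used in part (ii) of the proof of Theorem~\ref{thm:ua converge}, namely the identity $\liminf_{n\to+\infty,\,y\to x} u_{\a_n}(y) = \lim_{n\to+\infty} u_{\a_n}(x)$, yields $\tilde u(x) = u(x)$ for all $x\in\X$. Hence $\tilde u$, and therefore the function $H$ in \eqref{eqn:def H(x)}, does not depend on the chosen sequence.

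Next I would treat the lower threshold. Theorem~\ref{thm:limit s alpha} already provides the existence of the full limit $s^* = \lim_{\a\uparrow1} \sa$, so every subsequence $\{s_{\a_{n_k}}\}$ converges to the same $s^*$. To match this with the threshold produced in Theorem~\ref{thm:fle}, I would invoke Theorem~\ref{thm:unique}: under Assumptions~\ref{assum:qcall} and \ref{assum:decrease} the set $\mathcal{G}_\a$ of admissible optimal lower thresholds defined in \eqref{eqn:set of sa} equals $\{\sa\}$ for each $\a\in(\a^*,1)$, so the optimal lower threshold is unique for every discount factor. Consequently any optimal $(s^\prime_\a, S^\prime_\a)$ policy appearing in Theorem~\ref{thm:fle} has $s^\prime_\a = \sa$, and every limit point of $\{s^\prime_{\a_{n_k}}\}$ equals $s^*$. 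Thus the threshold $s^*$ in Theorem~\ref{thm:fle} is the same for all sequences.

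Finally, since Theorem~\ref{thm:inventory:acoe} asserts that its conclusions, the validity of the ACOE \eqref{eqn:IC ACOE}, the continuity and inf-compactness of $\tilde u$ and $H$, and the optimality of the resulting $(s^*,S^*)$ policy, hold for each sequence, and we have just shown that the associated $\tilde u = u$ and $s^*$ are the sequence-independent objects of \eqref{eqn:ua converge-1} and \eqref{eqn:def s*}, the corollary follows. The only real work lies in the two full-limit theorems invoked; the proof of the corollary itself is essentially bookkeeping. I would emphasize that the indispensable ingredient is Assumption~\ref{assum:decrease} for $\a=1,$ whose role is precisely to upgrade the subsequential convergence of Theorem~\ref{thm:inventory:acoe} to a genuine limit. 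Example~\ref{ex:mdplimit} shows that without such an assumption distinct relative value functions $\tilde u$ can coexist, so no sequence-independence can be expected in general.
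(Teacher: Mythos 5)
Your proposal is correct and follows essentially the same route as the paper, whose proof of this corollary is a one-line citation of Theorems~\ref{thm:inventory:acoe}, \ref{thm:limit s alpha} and \ref{thm:ua converge}; you simply spell out the bookkeeping (including the identification $\tilde u = u$ via the equicontinuity argument and the optional appeal to Theorem~\ref{thm:unique} for the threshold). Nothing is missing.
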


Define the set of all possible optimal average-cost lower thresholds
\begin{align}
	\mathcal{G} := \{ x \leq S : H (y) = K + H (S) \text{ for all } y\in[s,x] \},
	\label{eqn:set of s}
\end{align}
where $S=\min\big\{ \argmin_x\{ H(x) \} \big\}$ and  $s$ is defined in \eqref{eqn:def s} with $f:=H.$
Note that $s\in \mathcal{G}$ and $y \geq s$ for all $y\in\mathcal{G}.$

The following theorem establishes the uniqueness of the optimal lower threshold satisfying the optimality equations
for the inventory  model with holding/backlog costs satisfying quasiconvexity assumptions under the average cost criterion.

\begin{theorem}\label{thm:uniqueac}
Let Assumptions~\ref{assum:qcall} and \ref{assum:decrease} hold for $\a=1.$ Then,
\begin{align*}
	\mathcal{G} = \{ s^* \},
\end{align*}
where $\mathcal{G}$ and $s^*$ are defined in \eqref{eqn:set of s} and
\eqref{eqn:def s*} respectively.
\end{theorem}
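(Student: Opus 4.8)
The plan is to mirror the proof of Theorem~\ref{thm:unique}, working with the limiting objects $H$, $u$, and $s^*$ in place of the discounted $\Ga$, $\bva$, and $\sa$. Recall that $s\in\mathcal{G}$ with $s=\min\mathcal{G}$, and that by Theorem~\ref{thm:limit s alpha} together with Lemma~\ref{lm:limit m*} and the continuity of $\E[h_1(x-D)]$ one has $s^*\le \rone$ and $\E[h_1(s^*-D)]=w$. I would reduce the statement to two facts that play the roles of Lemmas~\ref{lm:sa} and~\ref{lm:r alpha <= r*}: (A) for every $y\in\mathcal{G}$, $\E[h_1(y-D)]=w$; and (B) for every $y\in\mathcal{G}$, $y\le \rone$.

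For (A) I would argue directly from the ACOE \eqref{eqn:IC ACOE} with $\tu=u$ (Theorem~\ref{thm:ua converge}). Fix $y\in\mathcal{G}$. For every $z\le y$ the definitions of $s$ and of $\mathcal{G}$ give $H(z)\ge K+H(S)$, while $\min_{a\ge0}H(z+a)=H(S)$ because $z\le y\le S$; hence \eqref{eqn:IC ACOE} reads $w+u(z)=K+H(S)-\c z$, i.e. $u(z)=K+H(S)-w-\c z$ for all $z\le y$. Substituting this expression for $u(y-D)$ (valid since $y-D\le y$) into \eqref{eqn:def H(x)} yields
\[
H(y)=\E[h(y-D)]+\c\E[D]+K+H(S)-w=\E[h_1(y-D)]+K+H(S)-w,
\]
and since $H(y)=K+H(S)$ for $y\in\mathcal{G}$ this gives $\E[h_1(y-D)]=w$. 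As every point of $[s,y]$ also lies in $\mathcal{G}$, in fact $\E[h_1(x-D)]\equiv w$ on $[s,y]$.

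The hard part is (B), the average-cost analogue of Lemma~\ref{lm:r alpha <= r*}, which I would establish by adapting the appendix proof of that lemma with $\Ga,\bva,\ra$ replaced by their limits and $\rone$. The genuine difficulty is that, unlike the discounted case, the limiting one-step function $\E[h_1(x-D)]$ carries no tilt $(1-\a)\c x$ that would force its minimizer strictly to the left of any flat stretch; consequently the argument must rule out the degenerate possibility that $w=\min_{x}\E[h_1(x-D)]$, in which case the level set $\{\E[h_1(\cdot-D)]=w\}$ could be a nondegenerate interval reaching past $\rone$ and $\mathcal{G}$ could leak into it. I expect this to be settled by the strict inequality $w>\min_{x}\E[h_1(x-D)]$, which holds because the positive setup cost $K$ is incurred at a positive long-run reordering rate under the optimal $(s^*,S^*)$ policy (the expected reorder cycle, governed by $\E[\textbf{N}(\cdot)]<+\infty$, is finite), so the average cost strictly exceeds the minimal attainable one-step cost; equivalently $s^*<\rone$.

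Finally, given (A) and (B), the conclusion mirrors Theorem~\ref{thm:unique}. By Assumption~\ref{assum:decrease} the map $x\mapsto\E[h_1(x-D)]$ is strictly decreasing, hence injective, on $(-\infty,\rone]$, so it attains the value $w$ at most once there. Since by (A)--(B) every $y\in\mathcal{G}$ satisfies $\E[h_1(y-D)]=w$ and $y\le\rone$, the set $\mathcal{G}$ is a singleton. As both $s$ (which lies in $\mathcal{G}$) and $s^*$ satisfy $\E[h_1(\cdot-D)]=w$ and are $\le\rone$, injectivity forces $s=s^*$, and therefore $\mathcal{G}=\{s^*\}$.
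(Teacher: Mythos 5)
Your step (A) coincides with the paper's first step: from the ACOE with $\tu=u$ one gets $u(z)=K+H(S)-w-\c z$ for all $z\le y,$ and substituting into \eqref{eqn:def H(x)} yields $\E[h(y-D)]=w-\c\E[D]$ for every $y\in\mathcal{G};$ that part is correct. The genuine gap is in your step (B), which you yourself flag as the hard part and only sketch. The fact you propose to lean on, $w>\min_{x}\E[h_1(x-D)],$ is never proved: the renewal--reward justification (positive long-run reordering rate, finite expected cycle length, cancellation of the $\c a$ term against $\c\E[D]$) is plausible but is a nontrivial argument that neither you nor the paper carries out. Moreover, even granting that inequality, the conclusion that every $y\in\mathcal{G}$ satisfies $y\le\rone$ does not follow from it alone; it additionally requires that $\mathcal{G}$ is the interval $[s,\sup\mathcal{G}]$ on which, by (A), $\E[h_1(\cdot-D)]\equiv w,$ and that $s\le s^*\le\rone,$ none of which appears in your write-up. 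You correctly diagnose that a literal transcription of Lemma~\ref{lm:r alpha <= r*} fails at $\a=1$ because the strict inequality $K>\a K$ degenerates, but the substitute you propose is both unproven and not the one that closes the argument.

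The paper's proof needs neither (B) nor $w>\min_{x}\E[h_1(x-D)].$ It argues by contradiction: if $y_1\in\mathcal{G}$ with $y_1>s,$ then by (A) $\E[h(y_1-D)]=\E[h(s-D)],$ so Assumption~\ref{assum:decrease} forces $y_1>\rone.$ Since $H(x)\ge K+H(S)=H(y_1)$ for all $x\le y_1$ (definition of $s$ together with $[s,y_1]\subset\mathcal{G}$), this gives $H(\rone)\ge H(y_1).$ On the other hand, writing $H(y_1)=K+H(S)=K+\c S+\E[h(S-D)]+\E[u(S-D)]$ and using the strict inequality $w+u(x)+\c x>H(S)$ for $x<S$ --- valid precisely because $S$ is chosen as the \emph{minimal} minimizer of $H$ --- together with $\PR(D>0)>0$ and $\E[h(S-D)]\ge\E[h(\rone-D)],$ one obtains $H(y_1)>K+\E[h(\rone-D)]+H(S)+\c\E[D]-w=H(\rone),$ a contradiction. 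The minimality of $S$ in the argmin combined with $\PR(D>0)>0$ is the source of strictness that replaces the vanished $(1-\a)K$ of the discounted case; this is the idea your proposal is missing, and without it (or a completed proof of $w>\min_x\E[h_1(x-D)]$ plus the interval argument) the proof does not go through.
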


\begin{proof}
Consider $\mathcal{G}$ and $S$ defined in \eqref{eqn:set of s}.
Recall that $s \in\mathcal{G}$ and $y\geq s,$ where $s$ is defined in \eqref{eqn:def s} with $f:=H.$
According to Theorem~\ref{thm:inventory:acoe} and Corollary~\ref{cor:inventory:acoe},
for $y\in\mathcal{G}$
\begin{align}
	w + u(x) + \c x =
	\begin{cases}
		K + H(S) & \text{if } x \leq y, \\
		H(x) & \text{if } x \geq y,
	\end{cases}
	\label{eqn:Hy0}
\end{align}
which implies that for $x \leq y$
\begin{align}
\begin{split}
	H (x) &= \c x + \E[h(x - D)] + \E[u (x - D)]  \\
	&= K + \E[h(x - D)] + H (S) + \c\E[D] - w .
\end{split}
	\label{eqn:Hy1}
\end{align}
Since $H(y) = K + H(S)$ for $y\in\mathcal{G},$ in view of \eqref{eqn:Hy1},
\begin{align}
	\E[h (y - D)] = w - \c\E[D], \qquad y\in\mathcal{G} .
	\label{eqn:Hy2}
\end{align}

The following proof is by contradiction.
Assume that there exists $y_1\in \mathcal{G}$ such that $y_1 > s.$
Then \eqref{eqn:Hy2} implies that $\E[h (y_1-D)] = \E[h (s - D)]. $
Therefore, Assumption~\ref{assum:decrease} implies that $\rone < y_1,$ where $\rone$ is defined in
\eqref{eqn:def r alpha r*}. Since $S=\min\big\{ \argmin_x\{ H(x) \} \big\},$ \eqref{eqn:Hy0} implies that for $x < S$
\begin{align}
	w + u(x) + \c x > H(S).
	\label{eqn:ry1}
\end{align}
Therefore,
\begin{align}
\begin{split}
	H (y_1) &= K + H(S) = K + \c S + \E[h(S-D)] + \E[u(x-D)]  \\
	&> K + \E[h(\rone - D)] + H(S) + \c\E[D] -w,
\end{split}
	\label{eqn:rycontra}
\end{align}
where the first equality holds because $y_1\in\mathcal{G},$ the second follows from \eqref{eqn:def H(x)}, and the inequality holds because $ \E[h(S-D)]\geq  \E[h(\rone-D)],$
\eqref{eqn:ry1} and $\PR (D>0) > 0.$ Since $y\in\mathcal{G}$ and $\rone < y_1,$
$H(\rone) \geq H(y_1).$ In view of \eqref{eqn:Hy1},
\begin{align*}
	H(y_1) \leq H(\rone)=  K + \E[h (\rone - D) ] + H(S) + \c\E[D] - w ,
\end{align*}
which contradicts \eqref{eqn:rycontra}. Then, $\mathcal{G} = \{ s \}.$
In addition, Corollary~\ref{cor:inventory:acoe} implies that $s^*\in \mathcal{G},$
where $s^*$ is defined in \eqref{eqn:def s*}.
Therefore, $s=s^*$ and $\mathcal{G} = \{ s^* \}.$
\end{proof}

The following corollary states that all the results of this paper
hold for inventory  models with convex holding/backlog costs.

\begin{corollary}\label{cor:inventory convex}
	The conclusions of lemmas, theorems, and corollaries in Sections~\ref{sec:cvg sa} and \ref{sec:cvg ua} hold under Assumption~\ref{assum:convex}.
\end{corollary}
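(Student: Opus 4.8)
The plan is to reduce the corollary to a single invocation of Lemma~\ref{lm:cqc}, since every statement in Sections~\ref{sec:cvg sa} and~\ref{sec:cvg ua} is established under some combination of Assumption~\ref{assum:qcall} and Assumption~\ref{assum:decrease} only. Concretely, the auxiliary results (Lemmas~\ref{lm:sa}, \ref{lm:r alpha <= r*}, \ref{lm:limit m*}, and \ref{lm:bua equicontinuous}) require just Assumption~\ref{assum:qcall}, whereas the convergence and uniqueness statements (Theorems~\ref{thm:limit s alpha}, \ref{thm:unique}, \ref{thm:ua converge}, \ref{thm:uniqueac}, together with Lemma~\ref{lm:bua pointwise converge} and Corollary~\ref{cor:inventory:acoe}) additionally use Assumption~\ref{assum:decrease}, either at $\a=1$ or for all $\a\in(\a^*,1)$. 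Hence it suffices to check that Assumption~\ref{assum:convex} supplies both assumptions, with a single choice of $\a^*$, over the full range of discount factors appearing in those proofs.

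The first step is to fix $\a^*$ as in \eqref{eqn:convexa*}, that is, any $\a^*\in[\max\{1+\lim_{x\to-\infty} h(x)/(\c x),0\},1)$. By Lemma~\ref{lm:cqc}, this choice makes Assumption~\ref{assum:qcall} valid and simultaneously yields Assumption~\ref{assum:decrease} for every $\a\in(\a^*,1]$. The essential point is that this interval is closed at $1$: Assumption~\ref{assum:decrease} therefore holds both at the endpoint $\a=1$, as needed by Theorems~\ref{thm:limit s alpha}, \ref{thm:ua converge}, and \ref{thm:uniqueac}, by Lemma~\ref{lm:bua pointwise converge}, and by Corollary~\ref{cor:inventory:acoe}, and for every $\a\in(\a^*,1)$, as needed by Theorem~\ref{thm:unique}. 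The second step is then immediate: with both Assumption~\ref{assum:qcall} and Assumption~\ref{assum:decrease} in force for this common $\a^*$, the hypotheses of each cited lemma, theorem, and corollary are exactly those furnished by Lemma~\ref{lm:cqc}, so every such result applies verbatim and its conclusion holds under Assumption~\ref{assum:convex}.

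I do not expect a substantive obstacle, because the analytic content has already been absorbed into Lemma~\ref{lm:cqc}; the proof of the present corollary is a chaining argument. The only point demanding care is bookkeeping, namely verifying that a single $\a^*$ works uniformly across all the statements and that the range of discount factors on which Assumption~\ref{assum:decrease} is invoked in each proof, in particular the inclusion of the endpoint $\a=1$, is indeed covered by the interval $(\a^*,1]$ produced by Lemma~\ref{lm:cqc}. Once this is confirmed, the conclusions of all the lemmas, theorems, and corollaries of Sections~\ref{sec:cvg sa} and~\ref{sec:cvg ua} follow, which is precisely the assertion of Corollary~\ref{cor:inventory convex}.
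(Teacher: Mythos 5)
Your proposal is correct and follows essentially the same route as the paper, which likewise disposes of the corollary in one step by citing Lemma~\ref{lm:cqc} to obtain Assumptions~\ref{assum:qcall} and~\ref{assum:decrease} from Assumption~\ref{assum:convex}. Your additional bookkeeping --- checking that the interval $(\a^*,1]$ from Lemma~\ref{lm:cqc} is closed at $1$ so that Assumption~\ref{assum:decrease} is available at the endpoint $\a=1$ where Theorems~\ref{thm:limit s alpha}, \ref{thm:ua converge}, and \ref{thm:uniqueac} require it --- is a correct and worthwhile elaboration of what the paper leaves implicit.
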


\section{Veinott's Reduction of Problems with Backorders and Positive Lead Times to Problems without Lead Times}
\label{sec:veinott}

In this section, we explain, by using the technique introduced by Veinott~\cite{Vei66} for finite-horizon problems
with continuous demand and without formal proofs, that the infinite-horizon inventory model with positive lead times and backorders 
can be reduced to the model without lead times. Therefore, the results of this paper, Feinberg and Lewis~\cite{FL16},
and Feinberg and Liang~\cite{FLi16a,FLi16b} also hold for the inventory  model with positive lead times. For inventory
model with a positive lead times, we also provide a formal formulation of the MDP with transformed state space for
future reference.

Consider the inventory  model defined in Section~\ref{sec:inventory control}. Instead of
assuming zero lead times, assume that the fixed lead time is $L\in \N := \{1,2,\ldots\},$ that is,
an order placed at the beginning of time $t$ will be delivered at the beginning of time $t+L.$
In addition, let $h^L(x)$ be the holding/backlog cost per period if the inventory level is $x.$
We define
\begin{align}
	h^* (x) : = \E[h^L (x-\sum_{i=1}^L D_i)] .
	\label{eqn:h*}
\end{align}

For the inventory  model, the dynamics of the system are defined by the equation
\begin{align}
	x_{t+1} = x_t + a_{t-L} - D_{t+1}, \qquad t=0,1,2,\ldots ,
	\label{eqn:dynamic1}
\end{align}
where $x_t$ and $a_t$ are the current inventory level before replenishment and the ordered
amount at period $t.$ In addition, at period $t,$ the one-step cost is
\begin{align}
	\tilde{c} (\bh^L_t,a_t) := K \indF_{\{ a_{t-L} > 0 \}} + \c a_{t-L} + \E[h^L (x_t + a_{t-L} - D_{t+1})] ,
	\qquad t = 0,1,\ldots,
	\label{cost1}
\end{align}
where $ \bh^L_t = (a_{-L},a_{-L+1},\ldots,a_{-1},x_0,a_0,\ldots,a_{t-1},x_t)$ is the history at
period $t.$

Equation~\eqref{eqn:dynamic1} means that a decision-maker observes at the end of the
period $t$ the history $h_t,$ places an order of amount $a_t,$ which will be delivered
in $L$ periods (that is, at the end of the period $t+L$), and the demand occurred during the
period $t+1$ is $D_{t+1}.$

As usual, consider the set of possible trajectories
$\bh^L_{ +\infty} = (\bh^L_t,a_t,x_{t+1},a_{t+1},\ldots) .$ An arbitrary policy
is a regular probability distribution $\pi (d a_{t}|\bh^L_t),$ $t=0,1,\ldots,$ on
$\Rp.$ It defines the transition probability for $\bh^L_t$ to $(\bh^L_t,a_{t}).$
The transition probability for $(\bh^L_t,a_{t})$ can be defined by \eqref{eqn:dynamic1}.
Therefore, given the initial state $\bh^L_0 = (a_{-L},a_{-L+1},\ldots,a_{-1},x_0),$
a policy $\pi$ defines, in view of the Ionescu Tulcea theorem, the probability distribution
$\PR_{\bh^L_0}^{\pi}$ on the set of trajectories. We denote by $\E_{\bh^L_0}^{\pi}$
the expectation with respect to $\PR_{\bh^L_0}^{\pi}.$

For a finite-horizon $N=1,2,\dots$ the expected total discounted cost is
\begin{align}\label{eqn:costs2y}
    \tilde{v}_{N,\a}^{\pi} (\bh^L_0) := \E_{\bh^L_0}^{\pi} \Big[ \sum_{t=0}^{N-1}
    \alpha^{t} \tilde{c}(x_t,a_t) \Big]
    = \E_{\bh^L_0}^{\pi} \Big[  \sum_{t=0}^{L-1} \a^t \tilde{c}(x_t,a_t) + \a^L \sum_{t=0}^{N-1} \a^t \tilde{c}(x_{t+L},a_{t+L}) \Big] ,
\end{align}
where $\alpha\in [0,1]$ is the discount factor and $\tilde{v}_{0,\a}^{\pi} (\bh^L_0)= 0.$
When $N= +\infty$ and $\a\in [0,1),$ \eqref{eqn:sec_model def:finite total disc cost} defines the
infinite-horizon expected total discounted cost denoted by $\tilde{v}_{\a}^{\pi}(\bh^L_0).$
The \emph{average cost per unit time} is defined as
$\tilde{w}^{\pi} (h_0):=\limsup_{N\to +\infty} \frac{1}{N}\tilde{v}_{N,1}^{\pi} (\bh^L_0).$

Let us define \begin{align}
	y_t := x_t + \sum_{i=1}^{L}a_{t-i} = x_{t+L} + \sum_{i=1}^L D_{t+i}, \qquad t=0,1,\ldots,
	\label{xy}
\end{align}
where $y_t$ is the sum of the current inventory level and the outstanding orders at the end
of period $t.$ Since the distribution of $x_{t+L}$ is determined by $y_t,$
in view of \eqref{eqn:dynamic1}, we show that it is possible to make the decision $a_t$ only based on the
quantity $y_t.$

Let us construct an MDP with state space $\Y = \R$ (or $\Y=\Z$) with states $y_t$ defined in \eqref{xy}.
The actions are the amount of orders that can be placed at each period $t;$ $A(y) = \A = \Rp$ (or $A(y) = \A=\N_0$)
for all $y\in\Y.$ In view of \eqref{eqn:dynamic1}
and \eqref{xy}, the dynamics of the system are defined by the equation
\begin{align}
	y_{t+1} = y_t + a_{t} - D_{t+1}, \quad t=0,1,2,\dots\ .
	\label{eqn:dynamic2y}
\end{align}
The transition probabilities for the MDP corresponding to \eqref{eqn:dynamic2y} is
\begin{align}
	q^*(B | y_t, a_t) = \PR (y_t + a_t - D_{t+1} \in B) ,
	\label{eqn:tranp2y}
\end{align}
for each measurable subset $B$ of $\Y .$ Let the one-step cost be
\begin{align}
	c^*(y,a) := K\indF_{\{ a > 0 \}} + & \c a + \E[h^*(y + a - D)] .
	\label{eqn:cy}
\end{align}

As was noticed by Veinott~\cite{Vei66}, the sum of current and ordered inventories forms an MDP whose dynamics 
and costs are defined by expressions \eqref{eqn:dynamic2y}--\eqref{eqn:cy}. So, we have the same MDP as for
in the problem without lead time with the only difference that the holding/backlog cost function $h$ is substituted
with the function $h^*$.
In addition, though the amount of inventory $x_{t+L}$ at time $(t+L)$ is not known at
time $t,$ the distribution of $x_{t+L}$ is known because $x_{t+L} \sim y_t - \sum_{l=1}^L D^{(l)},$ where
$D^{(1)},\ldots,D^{(L)}$ are i.i.d. random variables with $D^{(l)} \sim D,$ $l=1,2,\ldots,L.$ 
Since the actual amount of inventory level $x_{t+L}$  is unknown at time $t$, when the amount $a_t$  is ordered, 
 this problem can be modeled as a Partially Observable Markov Decision Process (POMDP).
According to current available theory (see Hern\'{a}ndez-Lerma~\cite[Chapter 4]{Her89},
Feinberg et al.~\cite{FKZ16}, and references therein), such models can be reduced to the MDPs whose states are
probability distributions of $x_{t+L}$ known at time $t,$ which is the distribution of $y_t - \sum_{l=1}^L D^{(l)} .$
Therefore, optimal policies for the MDP introduced by Veinott~\cite{Vei66} with state space $\Y,$ action space 
$\A,$ and transition probabilities \eqref{eqn:tranp2y}, and costs \eqref{eqn:cy}  
define the optimal
actions at time $t=0,1,2,\ldots\ .$ 

\begin{theorem}\label{thm:veinott1}
	Consider the problem with the lead time $L=1,2,\ldots\ .$ Then the MDP $\{ \Y,\A,q^*,c^* \}$ coincides
	the MDP $\{ \X,\A,q,c \}$ with the function $h$ substituted with $h^*$ in the latter one. Therefore, the
	conclusions of theorems in this paper
	hold for the problems with the lead time $L=1,2,\ldots,$ if the holding/backlog cost function $h^*$
	satisfies the conditions assumed for the function $h$ in the corresponding statements. 
\end{theorem}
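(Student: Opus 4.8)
The plan is to prove the theorem by a direct comparison of the defining data of the two Markov decision processes, and then to invoke the reduction sketched earlier in this section to transfer the conclusions back to the lead-time problem. First I would write out, side by side, the four ingredients of each MDP: the state space, the action space, the transition kernel, and the one-step cost. For the original model these are $\X$, $\A$, the kernel $q$ in \eqref{eqn:tranp2}, and the cost $c$ in \eqref{eqn:c}; for the inventory-position model they are $\Y$, $\A$, the kernel $q^*$ in \eqref{eqn:tranp2y}, and the cost $c^*$ in \eqref{eqn:cy}.

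The comparison itself is an inspection of formulas. By construction $\Y = \X$ (equal to $\R$ or $\Z$), and the action sets $A(y)=\A$ agree with those of the original model. The kernels coincide in functional form, since $q^*(B\mid y,a)=\PR(y+a-D\in B)$ is literally \eqref{eqn:tranp2} with $x$ renamed $y$. For the costs, I would point out that $c^*(y,a)=K\indF_{\{a>0\}}+\c a+\E[h^*(y+a-D)]$ is exactly the function $c$ of \eqref{eqn:c} with the holding/backlog function $h$ replaced by $h^*$ defined in \eqref{eqn:h*}. Hence, after the renaming $x\leftrightarrow y$ and the substitution $h\leftrightarrow h^*$, the two MDPs are identical as abstract MDPs, which establishes the first assertion of the theorem.

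Granting this identification, the transfer of conclusions is immediate: every result of this paper was proved for an arbitrary holding/backlog cost satisfying the finiteness, continuity, coercivity, and quasiconvexity (or, where relevant, convexity and strict monotonicity) hypotheses placed on $h$. Since $\{\Y,\A,q^*,c^*\}$ is such an MDP with $h^*$ playing the role of $h$, each theorem applies verbatim once $h^*$ satisfies the corresponding hypotheses; in particular one would check that $\E[h^*(y-D)]=\E[h^L(y-\sum_{i=1}^{L+1}D_i)]$ is finite, continuous, tends to $+\infty$ as $|y|\to+\infty$, and is quasiconvex. These verifications are precisely the standing hypotheses of the theorem, so they are assumed rather than proved here.

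The genuinely nontrivial point, and the one I expect to be the main obstacle, is the reduction of the original lead-time problem---whose natural state is the full history $\bh^L_t$---to the inventory-position MDP. This is where Veinott's observation enters: the inventory position $y_t$ in \eqref{xy} evolves by the simple recursion \eqref{eqn:dynamic2y}, and because $x_{t+L}$ is distributed as $y_t-\sum_{l=1}^L D^{(l)}$, the expected per-period holding/backlog cost charged to the decision at time $t$ becomes a function of $(y_t,a_t)$ alone, namely $\E[h^*(y_t+a_t-D)]$. Making this rigorous amounts to showing that $y_t$ is a sufficient statistic, i.e., carrying out the partially-observable-to-observable reduction cited in the text. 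As the section states, this step is handled through Veinott's technique and standard POMDP reduction theory rather than by a self-contained argument; once it is granted, the formula-level coincidence established above completes the proof.
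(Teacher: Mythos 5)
Your proposal is correct and follows essentially the same route as the paper: a term-by-term identification of the state space, actions, transition kernel, and one-step cost of $\{\Y,\A,q^*,c^*\}$ with those of $\{\X,\A,q,c\}$ after substituting $h^*$ for $h$, with the sufficiency of the inventory position $y_t$ deferred to Veinott's reduction and the POMDP theory cited in the section (the paper itself presents this section ``without formal proofs''). The only item the paper records that you leave implicit is the cost decomposition $\tilde{v}_{N,\a}^{\pi}(\bh^L_0)=f(\bh^L_0)+\a^L v_{N,\a}^{\pi}(y_0)$, which makes explicit that the first $L$ periods contribute a policy-independent term, so the two optimization problems have the same optimal policies.
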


\begin{proof}
	Since $y_t\in\X$ and the actions are the same for these two models, we need to verify only
	the correspondence for transition probabilities and costs. If $h = h^*,$ then formulae
	\eqref{eqn:c} and \eqref{eqn:cy} coincide with $x_t = y_t.$ The transition probabilities $q^*$ defined in
	\eqref{eqn:tranp2y} also coincides with \eqref{eqn:tranp2}.
	Observe that it is easy to show that
\begin{align}
	\tilde{v}_{N,\a}^{\pi} (\bh^L_0) = f(\bh^L_0) + \a^L v_{N,\a}^{\pi} (y_0),
	\label{eqn:costs2}
\end{align}	
where $f(\bh^L_0) := \sum_{t=0}^{L-1} \a^t \E[\tilde{c} (x_0 + \sum_{i=0}^{t-1} a_{-L+i} - \sum_{i=0}^{t-1} D_{t+i}, a_t ) ] .$
\end{proof}

For the problems with convex holding/backlog cost function $h^L,$ the function $h^*$ is also convex and
$\E[h^* (x-D)] \to +\infty$ as $|x|\to +\infty.$
We also need the addition assumption that $\E[h^* (x-D)] < +\infty$ for all
$x\in\X.$ Then the results in this paper formulated under Assumption~\ref{assum:convex} and the results in
Feinberg and Lewis~\cite{FL16} and Feinberg and Liang~\cite{FLi16a, FLi16b} hold for the problems with
the lead time $L=1,2,\ldots\ .$

\begin{remark}\label{rm:finiteh}
{\rm
Note that the assumption on the finiteness of the function $\E[h^* (x-D)]$ is necessary for the problems with
convex holding/backlog costs. Consider the lead time $L=1,$ the holding/backlog cost function
\begin{align*}
	h^L (x) :=
	\begin{cases}
		x + \frac{e^{2}}{5}, & \text{if } x > 0, \\
		\frac{e^{-x+2}}{(x-2)^2 + 1}, & \text{if } x \leq 0 ,
	\end{cases}
\end{align*}
and the random variable $D$ follows the exponential distribution with density function
$f_D (x) = e^{-x},$ if $x > 0,$ and $f_D (x) = 0,$ otherwise. Then the random variable $\textbf{S}_2$
follows the Erlang distribution with density function $f_{\textbf{S}_2} (x) = xe^{-x},$ if $x > 0,$ and
$f_{\textbf{S}_2} (x)= 0,$ otherwise. Observe that the function $h^L$ is continuous and nonnegative.
Some calculations show that the function $h^L$ is convex on $\R$ and $\E[h^L (x-D)] < +\infty$
for all $x\in\R.$ However, $\E[h^* (0-D)] = \E[h^L (0-\textbf{S}_2)] = +\infty.$
}
\end{remark}

\begin{remark}\label{rm:lostsales}
{\rm
	The reduction discussed in this section does not hold for the inventory
	model with lost-sales. For such model with lead time $L > 0,$ the dynamics of the system are defined
	by the equation
	\begin{align*}
		x_{t+1} = (x_t + a_{t-L} - D_{t+1})^+ :=\max\{x_t + a_{t-L} - D_{t+1} ,0\},\qquad t = 0,1,2,\ldots\ .
	\end{align*}
	Consider the transformation similar to the one defined in \eqref{xy}. Then
	$x_{t+L} = y_t - \sum_{i=1}^L \tilde{D}_{t+i},$ where $\tilde{D}_{j} :=\min\{ D_j,x_{j-1}+a_{j-L-1} \},$
	$j=t+1,t+2,\ldots,t+L.$ Since the distribution of $x_{t+L}$ does not depend solely on the information
	available at time $t,$ the reduction does not hold. Indeed, the structure of the optimal policies
	may depend on the lead times. In particular, if the lead times are large,
	then the constant-order policy performs nearly optimally; see Goldberg~et~al.~\cite{GKLSS}.
}
\end{remark}

\noindent
{\bf Acknowledgement.}
This research  was partially supported by NSF grants CMMI-1335296 and CMMI-1636193.

\newpage
\begin{appendices}
\section{Proofs to Section~\ref{sec:inventory control}}
\label{sec:inventory control:apx}

\begin{proof}[Proof of Lemma~\ref{lm:cqc}]
Since $\E[h(x-D)]\to +\infty$ as $x\to -\infty,$
\begin{align}
\limsup_{x\to -\infty }h(x) = +\infty.
\label{eqn:hinfty}
\end{align}
To see this note if $\limsup_{x\to -\infty} h(x) < +\infty,$ then there exist real numbers $M_1,$ $M_2 > 0$ such that
$ h (x)  \leq M_1$ for $x \leq -M_2.$ Since $D$ is a nonnegative random variable, $\E[h(x-D)] \leq M_1$ for
$x\leq -M_2$ and $\limsup_{x\to -\infty} \E[h(x-D)] \leq M_1 < +\infty.$ This contradicts the assumption that
$\E[h(x-D)]\to +\infty$ as $x \to -\infty.$

Since the function $h$ is convex,  the function $\E[h(x-D)]$ is convex.
Therefore, in view of \eqref{eqn:def ha}, the function $\E[\h (x-D)]$ is convex for all $\a\in[0,1].$
Since every convex function is quasiconvex,  the function $\E[\h (x-D)]$ is quasiconvex for all $\a\in[0,1].$

Since the function $h$ is convex on $\X,$ it is continuous. Therefore, \eqref{eqn:hinfty} implies
\begin{align}
	\lim_{x\to -\infty} h(x) = +\infty .
	\label{eqn:hinfty1}
\end{align}
As explained in Feinberg and Liang~\cite[Equations (2.3), (4.1)]{FLi16a},
\eqref{eqn:hinfty1} and the convexity of the function $h$ imply that
$1+\lim_{x\to -\infty} \frac{h(x)}{\c x} < 1.$

Consider $\a^* \in [\max\{ 1 + \lim_{x\to -\infty} \frac{h(x)}{\c x},0 \},1).$
For $\a\in(\a^*,1],$ since the function $\h (x) = \c x(\frac{h(x)}{\c x}+1-\a)$ tends to $+\infty$ as $x\to-\infty,$
\begin{align}
	\lim_{x\to -\infty}\E[h_{\a}(x-D)] =  +\infty, \qquad \a\in (\a^*,1].
	\label{eqn:limitha:1}
\end{align}
Therefore, the convexity of the function $\E[h_{\a} (x-D)]$ implies that
$\lim_{x\to -\infty}\E[\h (x-D)] > K + \inf_{x\in\X} \E[\h (x-D)] $ for all $\a\in(\a^*,1].$
Hence, Assumption~\ref{assum:qcall} holds with $\a^* \in [\max\{1 + \lim_{x\to -\infty} \frac{h(x)}{\c x},0\},1).$
In view of \eqref{eqn:limitha:1}, the convexity of the function
$\E[h_{\a} (x-D)]$ implies that Assumption~\ref{assum:decrease} holds for all
$\a\in(\a^*,1].$
\end{proof}

\begin{proof}[Proof of Lemma~\ref{lm:assumption}]
It is straightforward that Assumption~\ref{assum:qcall} implies Assumption~\ref{assum:qcalpha} for $\a\in (\a^*,1).$
In addition, since $\E[\h (x-D)] \to \E[h (x-D)]$ as $\a\uparrow1$ for all $x\in\X,$ the quasiconvexity of the function
$\E[\h (x-D)]$ implies that the function $\E[h(x-D)]$ is quasiconvex. Since $\inf_{x\in\X} \E[h (x-D)] < +\infty $ and
$\E[h (x-D)] = \E[\h (x-D)] - (1-\a)\c x\to +\infty$ as $x\to-\infty$ for each $\a\in (\a^*,1),$
Assumption~\ref{assum:qcalpha} holds for $\a = 1.$
\end{proof}

\section{Proofs to Section~\ref{sec:mdptotal}}
\label{sec:mdptotal:apx}

\begin{proof}[Proof of Lemma~\ref{lm:sS1}]
In view of \eqref{eqn:vna:trans}, for $x\leq y$ and $t=1,2,\ldots$
\begin{align*}
	\bv_{t,\a} (x) &=\min\{ \min_{a\geq 0}\{ K + \bG_{t-1,\a}(x+a) \},\bG_{t-1,\a} (x) \}\leq  \min_{a\geq 0} \{ K+ \bG_{t-1,\a} (x + a) \} \\
	& \leq K + \min_{a\geq y-x} \bG_{t-1,\a} (x + a) = K + \min_{a\geq 0} \bG_{t-1,\a} (y + a) \\
	& \leq K + \min\{ \min_{a\geq 0}\{ K + \bG_{t-1,\a}(y+a) \},\bG_{t-1,\a} (y) \} = K + \bv_{t,\a} (y)  ,
\end{align*}
where the second inequality follows from $y-x \geq 0.$
Furthermore, \eqref{eqn:va:trans} and the same arguments imply \eqref{eqn:va1}.

In view of \eqref{eqn:gna:trans}, for $x\leq y$ and $t=1,2,\ldots $
\begin{align*}
	\bG_{t,\a} (y) - \bG_{t,\a} (x) =& \E[\h(y-D)] - \E[\h(x-D)]  +\a\E[\bv_{t,\a} (y-D)  - \bv_{t,\a} (x-D)  ] \\
	\leq & \E[\h(y-D)] - \E[\h(x-D)] + \a K,
\end{align*}
where the inequality follows from \eqref{eqn:vta1}. Furthermore, \eqref{eqn:ga:trans} and the same arguments
imply \eqref{eqn:Ga1}.
\end{proof}

\begin{proof}[Proof of Lemma~\ref{lm:order}]
The proof is by induction on $t.$ For $t = 0,$ \eqref{eqn:vtade1} holds because $\bv_{0,\a} (x) = 0,$ $x\in\X,$ and
\eqref{eqn:Gtade1} follows from $\bG_{0,\a} (x) = \E[\h (x-D)],$ $x\in\X,$ and Assumption~\ref{assum:qcalpha}.
To complete the induction arguments, assume that \eqref{eqn:Gtade1} holds for $t = k \in \{0,1,2,\ldots\}.$
Then for $x\leq y\leq\ra$
\begin{align*}
	& \bv_{k+1,\a} (x) = \min\big\{ \bG_{k,\a} (x), \min_{a\geq 0}\{ K+\bG_{k,\a} (x+a)\} \big\} \\
	 = & \min\big\{ \bG_{k,\a} (x), \min_{a\geq 0}\{ K+\bG_{k,\a} (x+a)\} \big\} \\
	\geq & \min\big\{ \bG_{k,\a} (y), \min_{0\leq a< y-x}\{ K+\bG_{k,\a} (x+a)\}, \min_{a\geq y-x}\{ K+\bG_{k,\a} (x+a) \}  \big\} \\
	\geq & \min\big\{ \bG_{k,\a} (y), K+\bG_{k,\a}(y) , \min_{a\geq 0}\{ K+\bG_{k,\a} (y+a)\} \big\} \\	
	\geq & \min\big\{ \bG_{k,\a} (y) ,\min_{a\geq0}\{ K+\bG_{k,\a}(y+a)\}  \big\}   = \bv_{k+1,\a} (y)  ,
\end{align*}
where the first and last equalities follow from \eqref{eqn:vna:trans}, the first two inequalities  follow from \eqref{eqn:Gtade1}, and the last inequality follows from $K>0.$
Thus, \eqref{eqn:vtade1} holds for $t=k+1.$ In addition, for $x\leq y \leq \ra$
\begin{align*}
	&\bG_{k+1,\a}(y) - \bG_{k+1,\a} (x) \\
	= & \E[\h (y-D)] - \E[\h (x-D)] +\a\E[\bv_{k+1,\a}(y-D) - \bv_{k+1,\a} (x-D)  ] \leq   0,
\end{align*}
where the equality follows from \eqref{eqn:gna:trans} and the inequality holds because the function $\E[\h (x-D)]$
is nonincreasing on $(-\infty,\ra]$ and \eqref{eqn:vtade1}.
\end{proof}

\begin{proof}[Proof of Lemma~\ref{lm:vaorder}]
Note that the equality in the Lemma~\ref{lm:vaorder} follows from \eqref{eqn:trans}.
For the model with zero unit and terminal costs, since $\bv_{t,\a} \to \bva$ as $t\to+\infty,$
\eqref{eqn:vtade1} implies \eqref{eqn:vade1}. In addition, for $x\leq y \leq \ra$
\begin{align*}
	\bGa (y) -\bGa (x) = \E[\h (y-D)] - \E[\h (x-D)] +\a\E[\bva (y-D)- \bva (x-D)  ] \leq   0,
\end{align*}
where the equality follows from \eqref{eqn:ga:trans} and the inequality holds because the function $\E[\h (x-D)]$
is nonincreasing on $(-\infty,\ra]$ and \eqref{eqn:vade1}.
\end{proof}

\begin{proof}[Proof of Lemma~\ref{lm:samevalues}]
We first prove that
	\begin{align}
		\bva (x) - \c x = \tva (x) \geq \va (x) \geq  0, \qquad x\in\X.
		\label{eqn:tva>=0}
	\end{align}
Note that $\va (x) \geq 0$ for all $x\in\X,$ because all costs in the original inventory model are nonnegative.
Theorem~\ref{thm:qcall:t}\eqref{thm:qcall:t:1}, \eqref{eqn:trans:f}, and \eqref{eqn:bga=tga} imply that
for $N = 1,2,\ldots$
\begin{align}
	\tv_{N,\a} (x) = \tv_{N,\a}^{\phi^N} (x), \qquad x\in\X,
	\label{eqn:tvtaval}
\end{align}
where the policy $\phi^N$ is the $(s_{t,\a},S_{t,\a})_{t = 0,1,2,\ldots,N-1}$ policy defined in Theorem~\ref{thm:qcall:t}\eqref{thm:qcall:t:1}. Therefore,
\begin{align}
\begin{split}
	\tv_{N,\a} (x) &= \tv_{N,\a}^{\phi^N} (x) = \E_x^{\phi^N} \big[\sum_{t=0}^{N-1} c(x_t,a_t) - \a^N \c x_N \big] \\
	&= v_{N,\a}^{\phi^N} (x)  -  \a^N \c \E_x^{\phi^N}[x_N] \geq  v_{N,\a} (x)-  \a^N \c \max\{x,\barS_\a\}, \qquad x\in\X,
\end{split}
	\label{eqn:tvasS}
\end{align}
where the last inequality holds because $v_{N,\a}^{\phi^N} (x)  \geq v_{N,\a} (x),$ $x\in\X,$ and for all $N = 1,2,\ldots$ Theorem~\ref{thm:qcall:t}\eqref{thm:qcall:t:3} implies
that $\E_x^{\phi^N} [x_N] \leq \max\{ x, \barS_\a \} .$  Hence,
\begin{align*}
	\bva (x) - \c x=\tva (x) =\lim_{N\to+\infty} \tv_{N,\a} (x) \geq \lim_{N\to+\infty} v_{N,\a} (x) = \va (x) \geq 0 ,
\end{align*}
where the first two equalities follow from \eqref{eqn:trans}, the first inequality follows from \eqref{eqn:tvasS} and
$\lim_{N\to+\infty} \a^N \c \max\{x,\barS_\a\} = 0$ for each $x\in\X.$
Therefore, \eqref{eqn:tva>=0} holds.

To prove Lemma~\ref{lm:samevalues}, it remains to prove that $\va (x) \geq \tva (x),$ $x\in\X. $
Observe that for $t=0,1,2,\ldots$ and $\pi\in\PS$
\begin{align*}
	x_t \geq x_0 - \textbf{S}_t \quad \text{ and } \quad \E_{x_0}^{\pi}[x_t] \geq x_0 - t\E[D] ,
\end{align*}
where $\textbf{S}_t$ is defined in \eqref{eqn:Sn}.
Then for $N = 1,2,\ldots$
\begin{align}
\begin{split}
	\tv_{N,\a} (x) &\leq \tv^{\pi}_{N,\a} (x) = \E_x^{\pi} \big[\sum_{t=0}^{N-1} c(x_t,a_t) - \a^N \c x_N \big] = v_{N,\a}^{\pi} (x)  -  \a^N \c \E_x^{\pi}[x_N]  \\
	& \leq  v^{\pi}_{N,\a} (x)-  \a^N \c (x - N\E[D]), \qquad x\in\X .
\end{split}
	\label{eqn:tvasS:1}
\end{align}
Observe that $\lim_{N\to+\infty} \a^N \c (x - N\E[D]) = 0$ for each $x\in\X.$ Thus,
by taking the limits as $N\to+\infty$ of both sides of \eqref{eqn:tvasS:1},
$\tv_{\a} (x)  \leq v^{\pi}_{\a} (x)$ for all $\pi\in\PS,$ which implies that
$\tva (x) \leq \va (x),$ $x\in\X.$ Hence, $\tva (x) = \va (x) = \bva (x) - \c x,$ $x\in\X.$
\end{proof}

\section{Proofs to Section~\ref{sec:acoe}}
\label{sec:acoe:apx}

\begin{proof}[Proof of Lemma~\ref{lm:Horder}]
(i) In view of \eqref{eqn:va1} and Lemma~\ref{lm:samevalues}, \eqref{eqn:tu1} holds because $\ua (x) - \ua (y) = \va (x) - \va (y) = \bva (x) -\c x - (\bva (y) - \c y)$ and the function
$u_{\a_n} (x)$ converges pointwise to $\tu (x)$ as $n\to +\infty.$ For $x\leq y,$
\begin{align*}
	H (y) - H (x) =& \E[h(y-D)] +\a\E[\tu (y-D) + \c (y-D)]  \\
	& - \E[h(x-D)]-\a\E[\tu (x-D) + \c (x-D)] \\
	\leq & \E[\h(y-D)] - \E[\h(x-D)] + \a K,
\end{align*}
where the equality follows from \eqref{eqn:IC ACOE-2} and the inequality follows from \eqref{eqn:tu1}.

(ii) We first show that for $1\geq \a \geq \b > \a^*$
\begin{align}
	\ra \geq \rb .
	\label{eqn:ra<rb}
\end{align}
To verify this inequality, consider $1\geq \a\geq \b>\a^*.$ Then, for $x < \rb$
\begin{align}
\begin{split}
  \E & [\h (x-D)] - \E[\h(\rb - D)]  \\
= \E & [h(x-D)] + \E[h(\rb - D)] + (1-\a)\c(x-\rb)  \\
> \E & [h(x-D)] + \E[h(\rb - D)] + (1-\b)\c(x-\rb)  \\
= \E & [h_\b (x-D)] - \E[h_\b(\rb - D)] > 0,
\end{split}
\label{eqn:raincrease}
\end{align}
where the equalities follow from \eqref{eqn:def ha} and the inequality holds because $1-\a < 1-\b$ and $\c(x-\rb) < 0.$ If $\ra < \rb,$ then \eqref{eqn:raincrease} with $x = \ra$ implies that
\begin{align}
	\E[\h (\ra -D)] - \E[\h (\rb -D)] > 0 .
	\label{eqn:raincrease:contra}
\end{align}
However, the definition of $\ra$ in \eqref{eqn:def r alpha r*} implies that $\E[\h (\ra -D)] - \E[\h (\rb -D)] \leq 0 ,$ which contradicts \eqref{eqn:raincrease:contra}. Therefore, \eqref{eqn:ra<rb} holds.

Now, we prove that $\ra\uparrow\rone$ as $\a\uparrow 1.$ Consider a fixed discount factor $\b\in (\a^*,1).$
In view of \eqref{eqn:ra<rb}, since $\rb\leq \ra \leq \rone$ for all $\a\in (\b,1)$ and $|\rb|,$ $|\rone|<+\infty,$
the monotone convergence theorem implies that there exists $\ronea\in[\rb,\rone]$ such that $\ra\uparrow\ronea$ as
$\a\uparrow1. $ In addition, for $\a\in (\b,1)$
\begin{align}
\begin{split}
   0 & \leq  \E [h_1 (\ra-D)] - \E[h_1 (\rone - D)] \\
   &= \E [\h (\ra-D)] -  (1-\a)\c (\ra - \E[D])- \E[h_1 (\rone - D)] \\
   &\leq \E [\h (\rone-D)] -  (1-\a)\c (\ra - \E[D]) - \E[h_1(\rone - D)]  \\
&=  (1-\a)\c (\rone - \ra) \leq (1-\a)\c (\rone - \rb),
\end{split}
\label{eqn:raincrease:limit}
\end{align}
where the first two inequalities follows from the definition of $\rone$ and $\ra$ in \eqref{eqn:def r alpha r*},
the first equality holds because $\E [h_1 (\ra-D)] = \E [\h (\ra-D)] -  (1-\a)\c (\ra - \E[D]),$
the second equality holds because $\E [\h (\rone-D)] -\E [h_1 (\rone-D)] =  (1-\a)\c (\rone - \E[D]),$ and the last
inequality follows from  $\rb\leq \ra$ and $(1-\a)\c>0$ for $\a\in (\b,1).$ Observe that $(1-\a)\c (\rone - \rb)\to 0$ as $\a\uparrow1.$ Then
\eqref{eqn:raincrease:limit} implies that $\lim_{\a\uparrow1} \E [h_1 (\ra-D)] - \E[h_1 (\rone - D)] = 0.$
Therefore, the continuity of the function $\E[h_1 (x-D)]$ implies that
\begin{align}
	\E [h_1 (\ronea-D)] = \E[h_1 (\rone - D)].
	\label{eqn:raincrease:limit:contra}
\end{align}
Recall that $\ronea\in[\rb,\rone].$ If $\ronea < \rone,$ then Assumption~\ref{assum:qcall} and \eqref{eqn:def r alpha r*} imply that
$\E [h_1 (\ronea-D)] > \E[h_1 (\rone - D)],$ which contradicts \eqref{eqn:raincrease:limit:contra}. Therefore, $\ronea = \rone$ and $\ra\uparrow\rone$ as $\a\uparrow 1.$

In view of \eqref{eqn:vade1} and Lemma~\ref{lm:samevalues}, \eqref{eqn:tude1} holds because $\ua (x) - \ua (y) = \va (x) - \va (y)= \bva (x) -\c x - (\bva (y) - \c y),$ the function $u_{\a_n} (x)$ converges pointwise to $\tu (x)$ as $n\to +\infty,$
and $\ra\uparrow\rone$ as $\a\uparrow 1.$ For $x\leq y \leq \rone,$
\begin{align*}
 H(y) - H (x) = & \E[h (y-D)] - \E[h (x-D)] \\
  & + \a\E[\tu(y-D) + \c (y-D) - \tu (x-D) - \c (x-D) ] \leq  0,
\end{align*}
where the equality follows from \eqref{eqn:IC ACOE-2} and the inequality follows from
that the function $\E[h (x-D)]$ is nonincreasing on $(-\infty,\rone]$ and \eqref{eqn:tude1}.
\end{proof}

\begin{proof}[Proof of Corollary~\ref{cor:ordering at sa}]
The proof of the optimality of $(s,S)$ policies is based on the fact that
$K+H(S)<H(x),$ if $x<s,$ and $K+H(S)\ge H(x),$ if $x\ge s.$  Since the function $H$ is continuous,
we have that $K+H(S)=H(s).$ Thus both actions are optimal at the state $s.$
\end{proof}

\section{Proofs to Section~\ref{sec:cvg sa}}
\label{sec:cvg sa:apx}

\begin{proof}[Proof of Lemma~\ref{lm:sa}]
According to \eqref{eqn:bva sS}, $\bva (x) = K + \Ga (\Sa) = K + \bma$ for $x \leq y.$
In view of \eqref{eqn:ga:trans}, \eqref{eqn:set of sa}, and Lemma~\ref{lm:samevalues},
\begin{align*}
	K + \bma = \Ga (y) = \E[\h (y - D)] + \a \E[\bva (y - D)] = \E[\h (y - D)] + \a (K+\bma),
\end{align*}
which implies \eqref{eqn:value and ha}.
\end{proof}

\begin{proof}[Proof of Lemma~\ref{lm:r alpha <= r*}]
Observe that the second inequality in Lemma~\ref{lm:r alpha <= r*} follows from \eqref{eqn:ra<rb}.
The following proof is by contradiction. 	
Assume that there exist $\a\in (\a^*,1)$ and $y \in\mathcal{G}_{\a}$ such that $y > \ra.$
According to \eqref{eqn:bva sS}, $\bva (x) = K + \bma$ for $x \leq y.$
Therefore, \eqref{eqn:ga:trans} and Lemma~\ref{lm:samevalues} imply that for $x\leq y$
\begin{align}
	\Ga (x) = \E[\h (x - D)] + \a (K + \bma) .
	\label{eqn:Ga(ra)}
\end{align}
The definition \eqref{eqn:def r alpha r*} of $\ra$ and \eqref{eqn:Ga(ra)} imply that $\Ga(\ra)\leq \Ga(y).$
According to the definition of $\sa$ in \eqref{eqn:def s},
$\Ga(x) \geq \Ga(y)$ for $x\leq y.$ Therefore, $\Ga(\ra) = \Ga(y),$ which implies that
\begin{align}
\begin{split}
	& \E[\h (\ra - D)] + \a (K + \bma) = \Ga(\ra) = \Ga (y) = K + \Ga(\Sa)  \\
	= & K + \E[\h (\Sa - D)] + \a \E[\bva (\Sa - D)] > \E[\h(\ra - D)] + \a (K + \bma),
\end{split}
	\label{eqn:contra y>ra}
\end{align}
where the first equality follows from \eqref{eqn:Ga(ra)}, the last equality follows from \eqref{eqn:ga:trans} and Lemma~\ref{lm:samevalues},
and the inequality follows from $K > \a K$ and the definition of $\ra$ and $\bma.$
The contradiction in \eqref{eqn:contra y>ra} implies that $y \leq \ra$
for all $y \in \mathcal{G}_\a.$
\end{proof}

\begin{proof}[Proof of Lemma~\ref{lm:limit m*}]
According to equation~\eqref{eqn:sab}, for any given $\b\in (\a^*,1),$
there exists a constant $b>0$ such that $s_{\a}\in (-b,b)$ for all $\a\in [\b,1).$
In view of \eqref{eqn:bar w = underline w}, since $b$ and $x_U^*$ are real numbers,
where  $x^*_U$ is defined in \eqref{eq:boundmaxlul},
\begin{align}
\lim_{\a\uparrow1}(1-\a)(\ma - \c b) = \lim_{\a\uparrow1} (1-\a) (\ma + \c x_U^*) = w.
\label{eqn:limit bounds of bma}
\end{align}
Therefore, since $\sa > -b$ for all $\a\in[\b,1),$
\eqref{eqn:bounds on bma} and \eqref{eqn:limit bounds of bma} imply that
\begin{align}
	\lim_{\a\uparrow1} (1-\a) \bma =  w .
	\label{eqn:limit (1-a) bma}
\end{align}
Therefore, \eqref{eqn:limit (1-a) bma} and
Lemma~\ref{lm:sa} implies that
$\lim_{\a\uparrow1}\E[h_{\a} (s_{\a} - D)] = w .$
\end{proof}

\section{Proofs to Section~\ref{sec:cvg ua}}
\label{sec:cvg ua:apx}

\begin{proof}[Proof of Proposition~\ref{prop:ex1:assumption}]
We first verify the validity of Assumption~\textbf{W*}.
It is obvious the nonnegative cost function $c$ is $\K$-inf-compact and the transition probabilities are
weakly continuous. Thus, Assumption~\textbf{W*} holds.

To verify the validity of Assumption~\textbf{B}. we calculate the relative value function $\ua.$

Let us calculate the value functions $\va$ for $\a\in [0,1).$
Since there is only one action at $n = 0, 1,\ldots,$ the infinite-horizon value function
\begin{align}
	\va (n) = \sum_{i = 0}^{\infty} \zone_{n + i} \a^i, \qquad n = 0,1,\ldots\ .
	\label{eqn:ex1:van>=0}
\end{align}
Therefore, for $n = 0,$ \eqref{eqn:ex1:van>=0} implies
\begin{align}
\begin{split}
	\va (0) &= \sum_{i = 0}^{\infty} \zone_{i} \a^i = z_0 + \sum_{i = 1}^{\infty} (z_i - z_{i-1})\a^i
		+ \sum_{i = 0}^{\infty} \a^i  \\
	&= \sum_{i = 0}^{\infty} z_i \a^i - \a \sum_{i = 0}^{\infty} z_i \a^{i} + \frac{1}{1-\a} = f(\a) + \frac{1}{1-\a} ,
\end{split}
	\label{eqn:ex1:van=0}
\end{align}
where the second equality follows from \eqref{eqn:ex1:defzn1}, the third equality is straightforward, and the last
equality follows from \eqref{eqn:ex1:deffalpha}. Furthermore, \eqref{eqn:ex1:van>=0} implies that for $n=1,2,\ldots$
\begin{align}
\begin{split}
	\va (n) &= \sum_{i=0}^{\infty} (z_{n+i} - z_{n+i-1} + 1) \a^i  = (1-\a) \sum_{i=0}^{\infty} z_{n+i} \a^i - z_{n-1} + \frac{1}{1-\a} .
\end{split}
	\label{eqn:ex1:van>0}
\end{align}
For the state $-1\in\X,$ let $\varphi^{(k)},$ $k=0,1,\ldots,$ be
the policy that takes action $a^s$ at steps $0,1,\ldots,k-1$ and $a^c$ at step $k$ and $\varphi^{(\infty)}$ be the
policy that always takes action $a^s.$ Then for $k = 0,1,\ldots$
\begin{align*}
	\va^{\varphi^{(\infty)}} (-1) = \frac{1}{1-\a} < \a^{k+1} [ f(\a) + \frac{1}{1-\a} ] + \frac{1-\a^{k+1}}{1-\a}
		= \va^{\varphi^{(k)}} (-1) ,
\end{align*}
where the inequality follows from $f(\a) > 0.$ Therefore,
\begin{align}
	\va (-1) = \frac{1}{1-\a}.
	\label{eqn:ex1:van=-1}
\end{align}
For state $-2 \in \X,$ \eqref{eqn:ex1:van=0} and \eqref{eqn:ex1:van=-1} imply
\begin{align}
	\va (-2) = \a \va (-1) = \frac{\a}{1-\a} \leq \va (-1) \leq \va (0) .
	\label{eqn:ex1:van=-2}
\end{align}
In view of \eqref{eqn:ex1:van>0} and \eqref{eqn:ex1:van=-2}, for $n=1,2,\ldots$
\begin{align}
	\va (n) \geq -1 + \frac{1}{1-\a} = \va (-2),
	\label{eqn:ex1:van>van=-2}
\end{align}
where the inequality follows from $z_n \in \{ 0,1 \},$ $n = 0, 1, \ldots\ .$ \eqref{eqn:ex1:van=-2}
and \eqref{eqn:ex1:van>van=-2} imply
\begin{align}
	\ma = \va (-2) = \frac{\a}{1-\a},
	\label{eqn:ex1:ma}
\end{align}
where $\ma := \inf_{x\in\X} \va (x)$ is defined in \eqref{defmauaw}. Thus, \eqref{eqn:ex1:van=0}--\eqref{eqn:ex1:van=-2}
and \eqref{eqn:ex1:ma} imply \eqref{eqn:ex1:ua}.

Note that $w^* \leq w^{\varphi^{(\infty)}} (-1) = 1 < \infty.$ Then to complete the proof of the validity of
Assumption~\textbf{B}, we need to prove that $\sup_{\a\in[0,1)} \ua (n) < \infty$ for $n\in\X.$
According to Feinberg et al.~\cite[Lemma 5]{FKZ12}, since the cost function $c \geq 0$ and $w^* < \infty,$
it is equivalent to prove that for $n\in \X$
\begin{align}
	\limsup_{\a\uparrow1} \ua (n) < \infty.
	\label{eqn:ex1:limitua<infty}
\end{align}
Since $0\leq z_n \leq 1,$ $n = 0,1,\ldots,$ \eqref{eqn:ex1:ua} implies that
$0 \leq \ua (n) \leq 1 + (1-\a) \sum_{i=0}^{\infty} \a^i = 2$ for $n\in\X$ and $\a\in[0,1).$
Hence, \eqref{eqn:ex1:limitua<infty} holds. This completes the proof.
\end{proof}

\begin{proof}[Proof of Lemma~\ref{lm:bua equicontinuous}]
\eqref{lm:bua-equi} For all $\a\in [0,1)$
\begin{align}
\begin{split}
	& |\bua (x) - \bua (y)| = |\bva (x) - \bva (y)| = |\va  (x) - \va  (y) + \c (x - y)| \\
	= & |\ua (x) - \ua (y) + \c (x - y) | \leq |\ua (x) - \ua (y)| + \c |x - y| ,
\end{split}
	\label{eqn:equicont-1}
\end{align}
where the first equality follows from \eqref{eqn:def bua},
the second one follows from Lemma~\ref{lm:samevalues}, and the third one follows from \eqref{defmauaw}.

Consider $\eps > 0.$ For each $\b\in (\a^*,1),$
since the family of functions  $\{ \ua \}_{\a\in [\b,1)}$ is equicontinuous
(see Theorem~\ref{thm:u a equicont}), there exists $\delta > 0$ such that
$|\ua (x) - \ua (y)| < \frac{\eps}{2}$ for all $|x-y|<\delta$ and $\a\in [\b,1).$ Therefore, for
$|x-y| < \delta_1 := \min\{ \delta, \frac{\eps}{2\c} \},$ $\c | x-y|<\frac{\eps}{2}$ and
\eqref{eqn:equicont-1} implies that for $|x-y|<\delta_1$ and $\a\in [\b,1)$
\begin{align*}
	|\bua (x) - \bua (y)| \leq \eps.
\end{align*}
Thus, the family of functions  $\{\bua\}_{\a\in [\b, 1)}$ is equicontinuous.

\eqref{lm:bua-bd} Consider $x\in\X.$ For all $\a\in (\a^*,1),$
\begin{align}
\begin{split}
	& \bua (x) - \ua(x) \leq |\bua (x) - \ua (x)|  \\
	= &  |\bva (x) - \va  (x) - (\bma - m_\a) | = |\c x -(\bma - m_\a)  |  \\
	\leq & \c |x| + |\bma - m_\a| \leq \c (|x| + |\sa| + |x_U^*|) \leq \c (|x| + b + |x_U^*|),
\end{split}
	\label{eqn:difbuaua}
\end{align}
where the last two inequalities follow from \eqref{eqn:bounds on bma} and Theorem~\ref{thm:limit s alpha} respectively.

According to Theorem~\ref{thm:B}, since Assumption ${\bf B}$ holds,
$\sup_{\a\in (\a^*,1)} \ua (x) < +\infty.$ Therefore, \eqref{eqn:difbuaua} implies that
\begin{align*}
\sup_{\a\in (\a^*,1)} \bua (x) \leq \c (|x| + b + |x_U^*|) + \sup_{\a\in (\a^*,1)} \ua (x) < +\infty,
\qquad x\in\X.
\end{align*}
\end{proof}

\begin{proof}[Proof of Lemma~\ref{lm:bua pointwise converge}]
Consider any given $\b\in (\a^*,1).$ According to \eqref{eqn:sab}, there exists $b>0$ such that
$\sa\in [-b,b]$ for all $\a\in [\b,1).$

Consider $s^*$ defined in \eqref{eqn:def s*}.
We first prove that the limit $\lim_{\a\uparrow1} \bu_{\a}(x)$ exists for $x<s^*.$
For $x < s^*,$ according to Theorem~\ref{thm:limit s alpha},
there exists $\hat{\a} > \b$ such that $\sa > x$ for all $\a \in [\hat{\a},1),$
then \eqref{eqn:bua-2} implies that $\bua (x) = K$ for all $\a \in [\hat{\a},1).$  Therefore,
\begin{align}
	\lim_{\a \uparrow1} \bua(x) = K, \qquad x < s^*.
	\label{eqn:bu x<s*}
\end{align}

For $x > s^*,$ according to Theorem~\ref{thm:limit s alpha}, there exists $\hat{\a} >\b$ such that
$\sa < x$ for all $\a \in [\hat{\a},1).$ Therefore, in view of
\eqref{eqn:ga:trans}, \eqref{eqn:def bua}, \eqref{eqn:bma}, and \eqref{eqn:bua-2}, for all $\a \in [\hat{\a},1)$
\begin{align}
	\bua (x) = \E[h_\a (x-D)] + \a \E[\bua (x-D)] - (1-\a) \bma .
	\label{eqn:bua x>s*-1}
\end{align}
Then, \eqref{eqn:bua-2} and \eqref{eqn:bua x>s*-1} imply that
\begin{align}
	\bua (x) = \E[\sum_{j = 1}^{\textbf{N}(x - \sa) + 1} \a^{j-1} (\tilde{h}_{\a} (x - \textbf{S}_{j-1}) - (1-\a)\bma)  ]
	+ \E[\a^{\textbf{N}(x - \sa) + 1} K ],
	\label{eqn:bua x>s*-2}
\end{align}
where $\tilde{h}_{\a} (x) := \E[h_{\a} (x-D)],$ $x\in\X,$ and $\textbf{N}(\cdot)$ is defined
in \eqref{renew}.
For $\a\in[\b,1),$
\begin{align}
	1\geq \E[\a^{\textbf{N}(x - \sa) + 1}] \geq \E[\a^{\textbf{N}(x + b) + 1}]\geq \a^{\E[\textbf{N}(x + b) + 1]},
	\label{eqn:boundaK1}
\end{align}
where the first inequality follows from $\a < 1,$ the second one follows from $\sa \geq -b$ and $\a < 1,$
and the last one follows from Jensen's inequality.
Since $\PR(D>0)>0,$ $\E[\textbf{N}(x + b) + 1]< +\infty,$ which implies that
\begin{align}
	\lim_{\a\uparrow1}\a^{\E[\textbf{N}(x + b) + 1]}\to 1.
	\label{eqn:boundaK2}
\end{align}
Therefore, \eqref{eqn:boundaK1} and \eqref{eqn:boundaK2} imply that
\begin{align}
	\lim_{\a\uparrow1} \E[\a^{\textbf{N}(x - \sa) + 1} K ] = K.
	\label{eqn:boundaK3}
\end{align}

The first term of the right hand side
of \eqref{eqn:bua x>s*-2} can be written as
\begin{align}
\begin{split}
	& \E[\sum_{j = 1}^{\textbf{N}(x - \sa) + 1} \a^{j-1} (\tilde{h}_{\a} (x - \textbf{S}_{j-1})- (1-\a)\bma)]  \\
	= & \sum_{i = 0}^{+\infty} \sum_{j=1}^{i+1} \a^{j-1}\E[\tilde{h}_{\a}(x- \textbf{S}_{j-1})- (1-\a)\bma|\textbf{N}(x - \sa) =i] \PR(\textbf{N}(x-\sa)=i)  \\
	= & \sum_{j = 0}^{+\infty} \sum_{i=j}^{+\infty} \a^j \E[\tilde{h}_{\a}(x- \textbf{S}_j)- (1-\a)\bma|\textbf{N}(x - \sa) =i] \PR(\textbf{N}(x-\sa)=i)  \\
	= & \sum_{j = 0}^{+\infty} \a^j\E[\tilde{h}_{\a}(x- \textbf{S}_j)- (1-\a)\bma|\textbf{N}(x - \sa)\geq j] \PR(\textbf{N}(x-\sa)\geq j)  \\
	= & \sum_{j = 0}^{+\infty} \a^j \E[\tilde{h}_{\a}(x-\textbf{S}_j)- (1-\a)\bma|\textbf{S}_j \leq x - \sa] \PR(\textbf{S}_j \leq x - \sa),
\end{split}
	\label{eqn:sum of h}
\end{align}
where the first and third equalities follow from the properties of conditional expectations, the second equality changes the order of summation, and its validity follows from the nonnegativity of $h(\cdot)$ and the finiteness of $\bua (x),$ and the last equality follows from the fact that
$\PR(\textbf{N}(t) \geq n) = \PR(\textbf{S}_n \leq t).$

Now we prove that, if $\a\uparrow1,$ then the limit of the expression in \eqref{eqn:sum of h} as $\a\uparrow1$ exists almost everywhere on $(s^*, +\infty).$
Consider the sets $\mathcal{D}_n,$ $n = 0,1,\ldots,$ on which the distribution function of
$\textbf{S}_n,$ is not continuous. Since every distribution function is right-continuous,
\begin{align*}
	\mathcal{D}_n := \{ x\in\R : \lim_{y\uparrow x} \PR(\textbf{S}_n \leq y) \neq \PR(\textbf{S}_n \leq x) \}.
\end{align*}
Therefore, each set $\mathcal{D}_n,$ $n=0,1,\ldots,$ is at most countably infinite. Let
\begin{align}
	\mathcal{D} = \{ x > s^*: x = s^* + y, \quad y\in \cup_{n=0}^{+\infty} \mathcal{D}_n   \}
	\qquad \text{and} \qquad
	\mathcal{C} = (s^*,+\infty) \setminus \mathcal{D}.
	\label{eqn:def of discont cont pts}
\end{align}
Hence, $\mathcal{D}$ is also at most countably infinite. In addition, $\PR(\textbf{S}_n \leq x -s^*)$ is continuous at $x-s^*$
and $\lim_{\a\uparrow1} \PR(\textbf{S}_n \leq x - \sa) = \PR(\textbf{S}_n \leq x -s^*)$ for all $x\in\mathcal{C}$ and $n=0,1,\ldots\ .$

Consider $x\in\mathcal{C}.$
Since the function $\h(x)$  is quasiconvex, $\a^j \E[\tilde{h}_{\a}(x-\textbf{S}_j)|\textbf{S}_j \leq x - \sa] \leq \E[\h(x - D)] + \E[\h (-b - D)] < +\infty.$
Since $\PR(D > 0)>0,$ there exists a real number $\Delta_D>0$ such that
$\PR(D > \Delta_D)>0.$ Consider
\begin{align*}
	\tilde{D} =
	\begin{cases}
		0 & \text{if } D < \Delta_D, \\
		\Delta_D & \text{otherwise.}
	\end{cases}
\end{align*}
Then $\E[\tilde{D}] = \Delta_D \PR(D\geq \Delta_D) >0$ and $Var(\tilde{D}) = \Delta_D^2 \PR(D\geq \Delta_D) (1-\PR(D \geq \Delta_D)) <  +\infty.$
Define $\tilde{\textbf{S}}_0 = 0$ and $\tilde{\textbf{S}}_n = \sum_{i=1}^n \tilde{D},$ $n=1,2,\ldots\ .$ Therefore, $\PR(\textbf{S}_n \leq x) \leq \PR(\tilde{\textbf{S}}_n \leq x)$ for all $x\in\R$ and $n=0,1,\ldots\ .$
Since $\E[\tilde{D}] > 0,$  there exists $N_1 > 0$ such that
$n\E[\tilde{D}] > x + b$ for all $n > N_1.$ Let $\Delta(n) := n\E[\tilde{D}] - (x+b) >0.$
Hence, for $n > N_1$
\begin{align*}
	& \PR(\textbf{S}_n \leq x - \sa) \leq \PR(\tilde{\textbf{S}}_n \leq x - \sa)
	\leq \PR(\tilde{\textbf{S}}_n \leq x + b) \\
	= & \PR(\tilde{\textbf{S}}_n-n\E[\tilde{D}]\leq x+b - n\E[\tilde{D}]) \leq  \PR(|\tilde{\textbf{S}}_n - n\E[\tilde{D}]| \geq \Delta(n) ) \leq \frac{Var(\tilde{D})}{\Delta^2 (n)},
\end{align*}
where the last inequality follows from Chebyshev's inequality. In addition, according to
Lemma~\ref{lm:limit m*}, there exists $M_1 > 0$ such that
$|(1-\a)\bma| \leq M_1$ for all $\a\in [\b,1).$  Therefore, the summation
$\sum_{j = N_1}^{+\infty} ( \E[\h(x - D)] + \E[\h (-b - D)] + M_1) \frac{Var(\tilde{D})}{\Delta^2 (j)} < +\infty.$ By taking the limit of the first and last terms of \eqref{eqn:sum of h} as $\a\uparrow1,$
\begin{align}
\begin{split}
	&\lim_{\a\uparrow1} \E[\sum_{j = 1}^{\textbf{N}(x - \sa) + 1} \a^{j-1} (\tilde{h}_{\a} (x - \textbf{S}_{j-1})- (1-\a)\bma) ]
 \\
	=& \sum_{j = 0}^{+\infty} \E[\tilde{h}_{1}(x-\textbf{S}_j)- w|\textbf{S}_j \leq x - s^*] \PR(\textbf{S}_j \leq x - s^*) \\
	=&  \E[\sum_{j = 1}^{\textbf{N}(x - s^*) + 1} (\tilde{h}_1 (x - \textbf{S}_{j-1})- w)],
\end{split}
	\label{eqn:limit of sum-2}
\end{align}
where the first equality follows from the Lebesgue's dominated convergence theorem, Theorem~\ref{thm:limit s alpha}, and Lemma~\ref{lm:limit m*},
and the second equality follows from \eqref{eqn:sum of h} with $\sa$ replaced with $s^*$ and $\a^j$ replaced with $1.$ In view of \eqref{eqn:bua x>s*-2}, \eqref{eqn:boundaK3}, and  \eqref{eqn:limit of sum-2},
\begin{align}
	\lim_{\a\uparrow 1} \bua (x) =
	\E[\sum_{j = 1}^{\textbf{N}(x - s^*) + 1} (\tilde{h}_1 (x - \textbf{S}_{j-1})- w)] + K, \qquad x\in\mathcal{C} .
	\label{eqn:convegC}
\end{align}

Let $\bar{\mathcal{D}}:= \mathcal{D}\cup \{s^*\}.$ The complement of $\bar{\mathcal{D}}$ is
$\bar{\mathcal{D}}^c = \X\setminus\bar{\mathcal{D}}  = (-\infty,s^*)\cup\mathcal{C}.$
In view of \eqref{eqn:bu x<s*} and \eqref{eqn:convegC}, there exists the limit
\begin{align}
	\bu (x) := \lim_{\a\uparrow1} \bua (x), \qquad x\in \bar{\mathcal{D}}^c.
	\label{eqn:convergDc}	
\end{align}

Now it remains to prove that \eqref{eqn:convergence-1} holds for $x\in \bar{\mathcal{D}}.$
In view of Lemma~\ref{lm:bua equicontinuous} and Arzel\`{a}-Ascoli theorem (see  Hern\'{a}ndez-Lerma and Lasserre~\cite[p.~96]{HLL96}), there exist a sequence
$\{ \a_n\uparrow1 \}_{n=1,2,\ldots}$ with $\a_1 \geq \b$ and a continuous function $\bar{u}^*$ such that
\begin{align}
	\lim_{n\to +\infty} \bu_{\a_n}(x) = \bu^* (x) , \qquad x\in\X,
	\label{eqn:bu*-1}
\end{align}
In view of \eqref{eqn:convergDc} and \eqref{eqn:bu*-1},
\begin{align}
	\bu (x) = \bu^* (x), \qquad x\in \bar{\mathcal{D}}^c.
	\label{eqn:bu=bu*}
\end{align}

The following proof is by contradiction. Consider $z\in\bar{\mathcal{D}}.$
Assume that there exists a sequence $\{ \gamma_n\uparrow1 \}_{n=1,2,\ldots}$ with $\gamma_1 \geq \b$ such that $\lim_{n\to  +\infty} \bu_{\gamma_n} (z) \neq \bu^* (z).$
According to Arzel\`{a}-Ascoli theorem, there exist a subsequence $\{ \gamma_{n_k} \}_{k=1,2,\ldots}$ of the sequence $\{ \gamma_n\uparrow1 \}_{n=1,2,\ldots}$ and a continuous function
$u^\prime$ such that
\begin{align}
	\lim_{k\to +\infty} \bu_{\gamma_{n_k}}(x) = \bu^\prime (x) , \qquad x\in\X .
	\label{eqn:bu*-2}
\end{align}
Therefore, $\bu^\prime (z) \neq \bu^* (z).$ However, in view of \eqref{eqn:convergDc}, \eqref{eqn:bu=bu*}, and \eqref{eqn:bu*-2}, $\bu^* (x) = \bu^\prime (x)$ for all $x\in \bar{\mathcal{D}}^c,$  which implies that
\begin{align*}
\bu^* (z) = \lim_{y\to z, y\in \bar{\mathcal{D}}^c} \bu^* (y) = \lim_{y\to z, y\in \bar{\mathcal{D}}^c} \bu^\prime (y) = \bu^\prime (z).
\end{align*}
Therefore, there exists the limit
\begin{align}
	\bu (x) := \lim_{\a\uparrow1} \bua (x), \qquad x\in \bar{\mathcal{D}}.
	\label{eqn:convergDc-1}	
\end{align}
Furthermore, \eqref{eqn:convergDc}, \eqref{eqn:bu*-1}, and \eqref{eqn:convergDc-1} implies
that $\bu = \bu^*$ and the function $\bu$ is continuous.
\end{proof}

\begin{proof}[Proof of Corollary~\ref{cor:inventory:acoe}]
	This corollary follows from Theorems~\ref{thm:inventory:acoe}, \ref{thm:limit s alpha} and \ref{thm:ua converge}.
\end{proof}

\begin{proof}[Proof of Corollary~\ref{cor:inventory convex}]
	This corollary holds because Assumption~\ref{assum:convex} implies Assumptions~\ref{assum:qcall} and
	\ref{assum:decrease}; see Lemma~\ref{lm:cqc}
\end{proof}

\end{appendices}

\end{document}